\documentclass[12pt]{amsart}
%!TEX root = ../article.tex

%auto-ignore
%this ensures the arxiv doesn't try to start TeXing here.

\usepackage[margin=1in]{geometry}

\usepackage{amsmath}
\usepackage{comment}
\usepackage{listings}
\usepackage{graphicx}
\usepackage{bbm}
\usepackage[export]{adjustbox}
\usepackage{xy}
\usepackage{tikz}
\usetikzlibrary{shapes}
\usetikzlibrary{backgrounds}
\usetikzlibrary{trees}
\usetikzlibrary{intersections}
\usetikzlibrary{knots}
\usepackage{tikz-qtree}
\usepackage{relsize}

\newcommand{\Rep}{\text{Rep}}
\newcommand{\begin{tikzpicture}[baseline = -.15cm, scale = .65]
\begin{knot}[clip width = 4]
	\strand (-.5,.5) -- (.5,-.5);
	\strand (-.5,-.5) -- (.5,.5);
\end{knot}
\end{tikzpicture}}{\begin{tikzpicture}[baseline = -.15cm, scale = .65]
\begin{knot}[clip width = 4]
	\strand (-.5,.5) -- (.5,-.5);
	\strand (-.5,-.5) -- (.5,.5);
\end{knot}
\end{tikzpicture}}
\newcommand{\begin{tikzpicture}[baseline = -.1cm]
    \draw[fill = black!40!, draw = none] (-.485,-.5) rectangle (.485,.5);
    \filldraw[fill = white] (.5,.5) to[out = -135, in = 90] (.2,0) to[out = -90, in = 135]  (.5,-.5);
    \filldraw[fill = white] (-.5,.5) to[out =-45, in = 90] (-.2,0) to[out = -90, in = 45] (-.5,-.5);
\end{tikzpicture}}{\begin{tikzpicture}[baseline = -.1cm]
    \draw[fill = black!40!, draw = none] (-.485,-.5) rectangle (.485,.5);
    \filldraw[fill = white] (.5,.5) to[out = -135, in = 90] (.2,0) to[out = -90, in = 135]  (.5,-.5);
    \filldraw[fill = white] (-.5,.5) to[out =-45, in = 90] (-.2,0) to[out = -90, in = 45] (-.5,-.5);
\end{tikzpicture}}
\newcommand{\begin{tikzpicture}[baseline = -.1cm]
    \filldraw[fill = black!40!] (-.5,-.5) to[out = 90, in = 180] (0,-.2) to[out = 0, in = 90] (.5,-.5);
    \filldraw[fill = black!40!] (-.5,.5) to[out = -90, in = 180] (0,.2)  to[out = 0, in = -90] (.5,.5);
\end{tikzpicture}}{\begin{tikzpicture}[baseline = -.1cm]
    \filldraw[fill = black!40!] (-.5,-.5) to[out = 90, in = 180] (0,-.2) to[out = 0, in = 90] (.5,-.5);
    \filldraw[fill = black!40!] (-.5,.5) to[out = -90, in = 180] (0,.2)  to[out = 0, in = -90] (.5,.5);
\end{tikzpicture}}
\newcommand{\begin{tikzpicture}[baseline, rounded corners = 5mm]
    \draw (.5,.5) -- (.1,0) -- (.5,-.5);
    \draw (-.5,.5) -- (-.1,0) -- (-.5,-.5);
\end{tikzpicture}}{\begin{tikzpicture}[baseline, rounded corners = 5mm]
    \draw (.5,.5) -- (.1,0) -- (.5,-.5);
    \draw (-.5,.5) -- (-.1,0) -- (-.5,-.5);
\end{tikzpicture}}
\newcommand{\begin{tikzpicture}[baseline, rounded corners = 5mm]
    \draw (-.5,-.5) -- (0,.1) -- (.5,-.5);
    \draw (-.5,.5) -- (0,-.1) -- (.5,.5);
\end{tikzpicture}}{\begin{tikzpicture}[baseline, rounded corners = 5mm]
    \draw (-.5,-.5) -- (0,.1) -- (.5,-.5);
    \draw (-.5,.5) -- (0,-.1) -- (.5,.5);
\end{tikzpicture}}
\newcommand{\begin{tikzpicture}[baseline = -.1cm]
    \filldraw[fill = black!40!] (-.5,-.5) to[out = 90, in = 180] (0,-.2) to[out = 0, in = 90] (.5,-.5);
    \filldraw[fill = black!40!] (-.5,.5) to[out = -90, in = 180] (0,.2)  to[out = 0, in = -90] (.5,.5);
    \node at (-.5,0) {$a$};
    \node at (.5,0) {$b$};
\end{tikzpicture}}{\begin{tikzpicture}[baseline = -.1cm]
    \filldraw[fill = black!40!] (-.5,-.5) to[out = 90, in = 180] (0,-.2) to[out = 0, in = 90] (.5,-.5);
    \filldraw[fill = black!40!] (-.5,.5) to[out = -90, in = 180] (0,.2)  to[out = 0, in = -90] (.5,.5);
    \node at (-.5,0) {$a$};
    \node at (.5,0) {$b$};
\end{tikzpicture}}
\newcommand{\begin{tikzpicture}[baseline = -.1cm]
    \draw[fill = black!40!, draw = none] (-.485,-.5) rectangle (.485,.5);
    \filldraw[fill = white] (.5,.5) to[out = -135, in = 90] (.2,0) to[out = -90, in = 135]  (.5,-.5);
    \filldraw[fill = white] (-.5,.5) to[out =-45, in = 90] (-.2,0) to[out = -90, in = 45] (-.5,-.5);
    \node at (-.5,0) {$a$};
    \node at (.5,0) {$b$};
\end{tikzpicture}}{\begin{tikzpicture}[baseline = -.1cm]
    \draw[fill = black!40!, draw = none] (-.485,-.5) rectangle (.485,.5);
    \filldraw[fill = white] (.5,.5) to[out = -135, in = 90] (.2,0) to[out = -90, in = 135]  (.5,-.5);
    \filldraw[fill = white] (-.5,.5) to[out =-45, in = 90] (-.2,0) to[out = -90, in = 45] (-.5,-.5);
    \node at (-.5,0) {$a$};
    \node at (.5,0) {$b$};
\end{tikzpicture}}
\newcommand{\begin{tikzpicture}[baseline = -.1cm, scale = .6]
    \draw[fill = black!40!, draw = none] (-.475,-.5) rectangle (.475,.5);
    \filldraw[fill = white] (.5,.5) to[out = -135, in = 90] (.2,0) to[out = -90, in = 135]  (.5,-.5);
    \filldraw[fill = white] (-.5,.5) to[out =-45, in = 90] (-.2,0) to[out = -90, in = 45] (-.5,-.5);
\end{tikzpicture}}{\begin{tikzpicture}[baseline = -.1cm, scale = .6]
    \draw[fill = black!40!, draw = none] (-.475,-.5) rectangle (.475,.5);
    \filldraw[fill = white] (.5,.5) to[out = -135, in = 90] (.2,0) to[out = -90, in = 135]  (.5,-.5);
    \filldraw[fill = white] (-.5,.5) to[out =-45, in = 90] (-.2,0) to[out = -90, in = 45] (-.5,-.5);
\end{tikzpicture}}
\newcommand{\begin{tikzpicture}[baseline = -.1cm, scale = .6]
    \filldraw[fill = black!40!] (-.5,-.5) to[out = 90, in = 180] (0,-.2) to[out = 0, in = 90] (.5,-.5);
    \filldraw[fill = black!40!] (-.5,.5) to[out = -90, in = 180] (0,.2)  to[out = 0, in = -90] (.5,.5);
\end{tikzpicture}}{\begin{tikzpicture}[baseline = -.1cm, scale = .6]
    \filldraw[fill = black!40!] (-.5,-.5) to[out = 90, in = 180] (0,-.2) to[out = 0, in = 90] (.5,-.5);
    \filldraw[fill = black!40!] (-.5,.5) to[out = -90, in = 180] (0,.2)  to[out = 0, in = -90] (.5,.5);
\end{tikzpicture}}
\newcommand{\begin{tikzpicture}[baseline]
\begin{knot}[clip width = 4]
	\strand (-.5,.5) -- (.5,-.5);
	\strand (-.5,-.5) -- (.5,.5);
\end{knot}
\end{tikzpicture}
}{\begin{tikzpicture}[baseline]
\begin{knot}[clip width = 4]
	\strand (-.5,.5) -- (.5,-.5);
	\strand (-.5,-.5) -- (.5,.5);
\end{knot}
\end{tikzpicture}
}
\newcommand{\begin{tikzpicture}[baseline]
\begin{knot}[clip width = 4]
	\strand (-.5,-.5) -- (.5,.5);
	\strand (-.5,.5) -- (.5,-.5);
\end{knot}
\end{tikzpicture}}{\begin{tikzpicture}[baseline]
\begin{knot}[clip width = 4]
	\strand (-.5,-.5) -- (.5,.5);
	\strand (-.5,.5) -- (.5,-.5);
\end{knot}
\end{tikzpicture}}
\newcommand{\begin{tikzpicture}[baseline]
    \draw (-.5,-.5)-- (.5,.5);
    \draw (.5,-.5) -- (-.5,.5);
    \draw[fill=white] (0,0) circle (.1cm);
\end{tikzpicture}
}{\begin{tikzpicture}[baseline]
    \draw (-.5,-.5)-- (.5,.5);
    \draw (.5,-.5) -- (-.5,.5);
    \draw[fill=white] (0,0) circle (.1cm);
\end{tikzpicture}
}
\newcommand{\begin{tikzpicture}[baseline=.5cm, rounded corners = 5mm]
\draw (0,1)--(1,0) -- (0,0)--(.4,.4);
\draw (.6,.6)--(1,1);
\end{tikzpicture}}{\begin{tikzpicture}[baseline=.5cm, rounded corners = 5mm]
\draw (0,1)--(1,0) -- (0,0)--(.4,.4);
\draw (.6,.6)--(1,1);
\end{tikzpicture}}
\newcommand{\begin{tikzpicture}[baseline=.5cm, rounded corners = 5mm]
\draw (0,1)--(.4,.6);
\draw (.6,.4)--(1,0)--(0,0)--(1,1);
\end{tikzpicture}}{\begin{tikzpicture}[baseline=.5cm, rounded corners = 5mm]
\draw (0,1)--(.4,.6);
\draw (.6,.4)--(1,0)--(0,0)--(1,1);
\end{tikzpicture}}
\newcommand{\begin{tikzpicture}[baseline = -.1cm]
\draw[fill = black!40!, draw = none] (-.35,-.5) rectangle (.35,.5);
\draw (-.35,-.5) -- (-.35,.5);
\draw (.35,-.5) -- (.35,.5);
\draw[fill = white] (-.5,-.3) rectangle (.5,.3);
\node at (0,0) {\large{{$P$}}};
\end{tikzpicture}}{\begin{tikzpicture}[baseline = -.1cm]
\draw[fill = black!40!, draw = none] (-.35,-.5) rectangle (.35,.5);
\draw (-.35,-.5) -- (-.35,.5);
\draw (.35,-.5) -- (.35,.5);
\draw[fill = white] (-.5,-.3) rectangle (.5,.3);
\node at (0,0) {\large{{$P$}}};
\end{tikzpicture}}
\newcommand{\begin{tikzpicture}[baseline = -.1cm, scale = .6]
\draw[fill = black!40!, draw = none] (-.35,-.5) rectangle (.35,.5);
\draw (-.35,-.5) -- (-.35,.5);
\draw (.35,-.5) -- (.35,.5);
\draw[fill = white] (-.5,-.3) rectangle (.5,.3);
\node at (0,0) {\tiny{{$P$}}};
\end{tikzpicture}}{\begin{tikzpicture}[baseline = -.1cm, scale = .6]
\draw[fill = black!40!, draw = none] (-.35,-.5) rectangle (.35,.5);
\draw (-.35,-.5) -- (-.35,.5);
\draw (.35,-.5) -- (.35,.5);
\draw[fill = white] (-.5,-.3) rectangle (.5,.3);
\node at (0,0) {\tiny{{$P$}}};
\end{tikzpicture}}
\newcommand{\begin{tikzpicture}[baseline = -.1cm]
\draw[fill = black!40!, draw = none] (-.35,-.5) rectangle (.35,.5);
\draw (-.35,-.5) -- (-.35,.5);
\draw (.35,-.5) -- (.35,.5);
\draw[fill = white] (-.5,-.3) rectangle (.5,.3);
\node at (0,0) {\large{{$Q$}}};
\end{tikzpicture}}{\begin{tikzpicture}[baseline = -.1cm]
\draw[fill = black!40!, draw = none] (-.35,-.5) rectangle (.35,.5);
\draw (-.35,-.5) -- (-.35,.5);
\draw (.35,-.5) -- (.35,.5);
\draw[fill = white] (-.5,-.3) rectangle (.5,.3);
\node at (0,0) {\large{{$Q$}}};
\end{tikzpicture}}
\newcommand{\begin{tikzpicture}[baseline = -.1cm, scale = .6]
\draw[fill = black!40!, draw = none] (-.35,-.5) rectangle (.35,.5);
\draw (-.35,-.5) -- (-.35,.5);
\draw (.35,-.5) -- (.35,.5);
\draw[fill = white] (-.5,-.3) rectangle (.5,.3);
\node at (0,0) {\tiny{{$Q$}}};
\end{tikzpicture}}{\begin{tikzpicture}[baseline = -.1cm, scale = .6]
\draw[fill = black!40!, draw = none] (-.35,-.5) rectangle (.35,.5);
\draw (-.35,-.5) -- (-.35,.5);
\draw (.35,-.5) -- (.35,.5);
\draw[fill = white] (-.5,-.3) rectangle (.5,.3);
\node at (0,0) {\tiny{{$Q$}}};
\end{tikzpicture}}

\newcommand{\begin{tikzpicture}[baseline = .5 cm, rounded corners = 6mm]
    \draw (0,1)--(.5,-.1) -- (1,1);
\end{tikzpicture}}{\begin{tikzpicture}[baseline = .5 cm, rounded corners = 6mm]
    \draw (0,1)--(.5,-.1) -- (1,1);
\end{tikzpicture}}
\newcommand{\begin{tikzpicture}[baseline=.5cm, rounded corners = 5mm]
\draw (0,1)--(1,0) -- (0,0)--(1,1);
\draw[fill = white] (.5,.5) circle (.1cm);
\end{tikzpicture}}{\begin{tikzpicture}[baseline=.5cm, rounded corners = 5mm]
\draw (0,1)--(1,0) -- (0,0)--(1,1);
\draw[fill = white] (.5,.5) circle (.1cm);
\end{tikzpicture}}
\newcommand{\input{diagrams/tikz/trivalent.tex}}{\input{diagrams/tikz/trivalent.tex}}
\newcommand{\input{diagrams/tikz/square.tex}}{\input{diagrams/tikz/square.tex}}
\newcommand{\begin{tikzpicture}[baseline]
\draw[fill = black!40!] circle (.5cm);
\end{tikzpicture}}{\begin{tikzpicture}[baseline]
\draw[fill = black!40!] circle (.5cm);
\end{tikzpicture}}
\newcommand{\begin{tikzpicture}[baseline]
\draw[fill = black!40!, draw = none] (-.6,-.6) rectangle (.6,.6);
\draw[fill = white] circle (.5cm);
\end{tikzpicture}}{\begin{tikzpicture}[baseline]
\draw[fill = black!40!, draw = none] (-.6,-.6) rectangle (.6,.6);
\draw[fill = white] circle (.5cm);
\end{tikzpicture}}
\newcommand{\skeincircle}{\begin{tikzpicture}[baseline]
\draw circle (.5cm);
\end{tikzpicture}}
\newcommand{\input{diagrams/tikz/verticalstrand2.tex}}{\input{diagrams/tikz/verticalstrand2.tex}}

\newcommand{\begin{tikzpicture}[baseline=(current bounding box.center), rounded corners = 3mm]
    \draw (0,0) -- (.75,.5) -- (0,1);
    \draw (.75,0) -- (.45,.2);
    \draw (.25,.3) -- (0,.5) -- (.3,.7);
    \draw (.45,.8)--(.75,1);
\end{tikzpicture}
}{\begin{tikzpicture}[baseline=(current bounding box.center), rounded corners = 3mm]
    \draw (0,0) -- (.75,.5) -- (0,1);
    \draw (.75,0) -- (.45,.2);
    \draw (.25,.3) -- (0,.5) -- (.3,.7);
    \draw (.45,.8)--(.75,1);
\end{tikzpicture}
}

\usepackage{amssymb}
\usepackage{epstopdf}
\usepackage{amsthm}
\usepackage{enumitem}
\usepackage{multirow}
\DeclareGraphicsRule{.tif}{png}{.png}{`convert #1 `dirname #1`/`basename #1 .tif`.png}

\newtheorem{theorem}{Theorem}[subsection]
\newtheorem{customthm}{Theorem}
\newtheorem{corollary}[theorem]{Corollary}
\newtheorem{lemma}[theorem]{Lemma}
\newtheorem{proposition}[theorem]{Proposition}

\theoremstyle{definition}
\newtheorem{definition}[theorem]{Definition}
\newtheorem{example}[theorem]{Example}

\theoremstyle{remark}
\newtheorem*{remark}{Remark}

\lstdefinestyle{mystyle}{
basicstyle=\times}
\lstset{style=mystyle}

\title{Spin models for singly-generated Yang-Baxter planar algebras}
\author{Joshua R. Edge}
\address{Department of Mathematics and Computer Science, Denison University, 100 W. College St., Granville, OH 43023}
\email{edgej@denison.edu}
\date{July 29, 2020}

\begin{document}

\begin{abstract}
In this paper, we classify all spin models for singly-generated Yang-Baxter planar algebras in terms of certain highly symmetric graphs. Using Liu's classification of singly generated Yang-Baxter planar algebras, this classifies all spin models for the Jones polynomial, the Bisch-Jones planar algebras, and the Kauffman polynomial. This simplifies and clarifies Jaeger's classification of spin models for the Kauffman polynomial.  In particular, we completely avoid the opaque and computationally-taxing concept of ``formal self-duality"; we also explain the numerous exceptional cases in Jaeger's classification by demonstrating a new, discrete two-parameter family of spin models for the Bisch-Jones planar algebras.
\end{abstract}

\maketitle

\section{Introduction}

In \cite{Jon85}, we see the first instance of the Jones polynomial, the first knot polynomial to be discovered since the Alexander polynomial in 1923. The Kauffman polynomial was first defined a few years later in \cite{Kau90}. At around the same time, Kauffman also noticed a curious connection between the Jones polynomial and spin models, which are physical models used to explain magnetism. In \cite{Jae95}, Jaeger noted a similar relationship between the Kauffman polynomial and spin models. 

Given a tangle, the regions of it can be shaded in a checkerboard fashion. Up to rotation, a crossing can only be shaded in two ways:
\begin{center}
\includegraphics[valign = c]{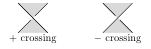}
\end{center}
The original definition of a spin model for a knot polynomial involved choosing a finite set $S$ and symmetric maps $w_{\pm}:S\times S \to \mathbb{C}.$  If we label the two unshaded regions of the $+$ crossing with elements of $S,$ we think of $w_+$ as assigning weights to each diagram:
\begin{center}
\includegraphics[valign = c]{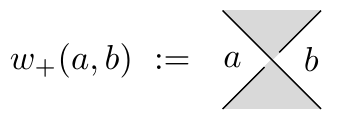}
\end{center}
$w_-$ gives weight for the $-$ crossing. We often think of these maps as $|S|\times|S|$ matrices, $W_\pm$. Together the triple $M = (S,w_+, w_-)$ give a state sum model, which allows us to assign numbers to knots or links in the following manner. Given a knot or link, $K$, we assign atoms of $S$ to the unshaded regions of $K$, which we call a state, $\sigma,$ of $K$. $\sigma,$ then, assigns a weight to each crossing of $K,$ and we can obtain a number for $\sigma$, $w_\sigma,$ by taking the product of these weights. A state sum for $K$ is $\sum_{\sigma} w_\sigma$ for all possible states, $\sigma$ of $K.$
 
A spin model for a knot polynomial is a state sum model that is invariant under the relations of the knot polynomial. That is, if one knot or link can be obtained from another via the Reidermeister moves or some other relation of the knot polynomial, then the number assigned to them by the state sum model will be the same. 

In \cite{Kau87}, Kauffman showed that $W_+ = t_0Id + t_1 A_1,$ where $t_i \in\mathbb{C}$ and $A_1$ is the adjacency matrix of a complete graph. Similarly in \cite{Jae95}, Jaeger showed that $W_+ = t_0Id + t_1 A_1 + t_2 A_2,$ where $t_i \in\mathbb{C}$, $A_1$ is the adjacency matrix of some graph $\Gamma$ and $A_2$ is the adjacency matrix of $\Gamma^c,$ the graph complement of $\Gamma.$ He then gave necessary and sufficient conditions on $\Gamma$ for a specialization of the Kauffman polynomial to have a spin model, which can be stated as follows:
\begin{customthm}[\cite{Jae95}]
Let $\mathcal{V}$ be a specialization of the Kauffman polynomial. Then $\mathcal{V}$ admits a spin model if and only if $A_1$ is the adjacency matrix of some graph $\Gamma$ of $n$ vertices with the following properties:
\begin{enumerate}[label = \roman{enumi}.]
    \item $\Gamma$ is connected.
    \item $\Gamma$ has at least 5 vertices.
    \item $\Gamma$ is 3-point regular
    \item $\Gamma$ is formally self-dual.
\end{enumerate}
\end{customthm}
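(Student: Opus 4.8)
The plan is to convert the geometric requirement that $\mathcal{V}$ be invariant under the Kauffman relations into a closed system of algebraic equations on the pair $(W_+, W_-)$, and then to read the four graph conditions off that system. Since the Kauffman polynomial is a regular-isotopy invariant generated by a single crossing, invariance reduces to three families of identities: a Reidemeister-II (inverse) relation forcing $W_+$ and $W_-$ to be entrywise inverses, $W_+ \circ W_- = J$, where $J$ is the all-ones matrix and $\circ$ is the entrywise product; Reidemeister-I (loop) relations forcing the row sums $\sum_c W_\pm(a,c)$ to equal a constant modulus independent of $a$; and the Reidemeister-III \emph{star-triangle} (type-II) equation, the quadratic identity $\sum_x W_+(a,x)W_+(x,b)W_-(x,c) = D\,W_+(a,b)W_-(a,c)\,/\,W_-(b,c)$. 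Using the established form $W_+ = t_0 I + t_1 A_1 + t_2 A_2$, and noting that entrywise-inverting a matrix with this support pattern keeps it in the span, both $W_+$ and $W_-$ lie in the three-dimensional space $\mathcal{A} = \langle I, A_1, A_2\rangle$. The theorem then becomes: for which $\Gamma$ does $\mathcal{A}$ support solutions of these equations?

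First I would extract the combinatorial structure forced by the star-triangle equation. That identity sums over a single free vertex $x$ against three fixed vertices $a,b,c$, so equating coefficients in the basis $\{I, A_1, A_2\}$ shows that the relevant three-vertex configuration counts depend only on the mutual scheme-relations of $a,b,c$; this is exactly \emph{3-point regularity} (iii). Its two-point shadow already forces products $A_iA_j$ back into $\mathcal{A}$, i.e.\ $\mathcal{A}$ is closed under ordinary matrix multiplication, so $\{I, A_1, A_2\}$ is the adjacency basis of a symmetric two-class association scheme and $\Gamma$ is strongly regular. Connectivity (i) follows because a disconnected $\Gamma$ would split $A_1$ into a block form incompatible with generating the off-diagonal pattern of $J$ through a single relation, and the lower bound $n \ge 5$ (ii) is forced by discarding the degenerate collapses in which $A_2$ vanishes or the scheme reduces to the complete-graph (Jones) case of Kauffman's earlier result.

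The heart of the argument, and the source of condition (iv), is to compare the two canonical bases of $\mathcal{A}$: the adjacency basis $\{I, A_1, A_2\}$ and the primitive idempotents $\{E_0, E_1, E_2\}$, related by eigenmatrices $P$ and $Q$ with $PQ = nI$. The star-triangle equation says precisely that the spin-model weight $W_+$ intertwines entrywise multiplication within $\mathcal{A}$ with ordinary multiplication by $W_+$; rewriting this intertwining in the two bases forces the character table $P$ to equal the dual table $Q$ up to normalization, which is exactly \emph{formal self-duality} (iv). I would verify both directions here: a spin model yields $P = Q$, and conversely, from a connected, strongly regular, formally self-dual scheme on $n \ge 5$ points I would reconstruct the weights $t_0, t_1, t_2$ directly from the (now self-dual) eigenvalue data and check that they satisfy the inverse, loop, and star-triangle relations, producing the desired spin model.

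The main obstacle is this last equivalence between a single quadratic matrix identity and the clean spectral statement $P = Q$. In the direct approach one must grind through the eigenvalue bookkeeping of the Bose--Mesner algebra both to deduce self-duality from the star-triangle equation and, in the converse, to solve for admissible weights from self-dual spectral data while keeping them consistent with the modulus normalization. This is precisely the ``opaque and computationally-taxing'' step the present paper circumvents: recasting the problem as the existence of an embedding of the singly-generated Yang--Baxter planar algebra for the Kauffman polynomial, via Liu's classification, repackages the duality transparently and removes the need to handle formal self-duality by hand.
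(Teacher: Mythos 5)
First, a framing point: the paper never proves this statement at all --- it is quoted from \cite{Jae95} purely as motivation, and the paper's actual contribution (Theorem~\ref{main} and its corollary) \emph{replaces} it by a classification in which formal self-duality becomes the checkable condition $q_3 - 3q_2 + 3q_1 - q_0 \ne 0$, and in which conditions (i)--(ii) are explained away: the excluded graphs do give spin models, just for TLJ or Bisch--Jones planar algebras rather than for the Kauffman polynomial. Your proposal instead reconstructs Jaeger's original route through the Bose--Mesner algebra $\mathcal{A} = \langle I, A_1, A_2 \rangle$ and its eigenmatrices. That is a legitimately different approach from the paper's, but as written it has two genuine gaps that prevent it from being a proof.

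The first gap is the central one: the equivalence between the star-triangle identity and formal self-duality (the eigenmatrix condition you write as $P = Q$ up to normalization), in both directions, \emph{is} the theorem. You explicitly defer it, calling it ``the main obstacle,'' so the proposal restates the hard step rather than executing it; this is not routine bookkeeping but the entire technical content of \cite{Jae95}. The second gap is that your arguments for (i) and (ii) would fail. Since $I + A_1 + A_2 = J$ identically, the all-ones matrix lies in $\mathcal{A}$ for \emph{every} graph, connected or not, so there is no ``block form incompatible with generating the off-diagonal pattern of $J$.'' Indeed, disjoint unions of complete graphs are strongly regular, $\Lambda$-free, hence 3-point regular, and small examples such as $2K_2$ (whose scheme is the self-dual group scheme of $\mathbb{Z}/4$) even satisfy formal self-duality; they also carry perfectly consistent weight matrices --- these are exactly the paper's new two-parameter family of Bisch--Jones spin models. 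What excludes them from Jaeger's theorem is not any inconsistency in the matrix equations but the fact that the resulting invariant is a degenerate (Jones or Bisch--Jones type) specialization rather than a genuine Kauffman polynomial. Making conditions (i) and (ii) precise therefore requires tracking degenerate parameter values of $(a, z)$, or, as the paper does, identifying which singly-generated Yang--Baxter planar algebra the weights actually generate (for instance via $\dim V_3$); the same issue, not the vanishing of $A_2$, is what rules out the square in condition (ii).
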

The reasons for requiring that $\Gamma$ be connected and have at least five vertices were a bit mysterious. The motivation for this paper was to understand more about the graphs excluded in \cite{Jae95} and to attempt to unify the classifications of spin models for knot polynomials.

To do this, we generalize slightly the notion of a knot polynomial. The result is a so-called singly-generated Yang-Baxter planar algebra, the classification of which can be found in \cite{BJ00}, \cite{BJ03}, \cite{BJL17}, and \cite{Liu15}. The main result of the latter paper is the following classification:
\begin{customthm}[\cite{Liu15}]
Let $\mathcal{V}$ be a singly-generated Yang-Baxter planar algebra. Then $\mathcal{V}$ is isomorphic to one of the following:
\begin{enumerate}[label = \roman{enumi}.]
    \item a specialization of the Jones polynomial, 
    \item a Bisch-Jones planar algebra,
    \item a specialization of the Kauffman polynomial, or
    \item a new family of planar algebras, which we call the Liu family. 
\end{enumerate}
\end{customthm}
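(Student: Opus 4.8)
The plan is to classify $\mathcal{V}$ through a structural analysis of its single generator $R$, the crossing \overcrossing, which lives in the $2$-box space $P_2$. Since $\mathcal{V}$ is Yang-Baxter, $R$ is invertible and satisfies Reidemeister moves II and III. The starting point is that $P_2$ is finite-dimensional, so $R$ satisfies a minimal polynomial; because $R$ is invertible, this polynomial has nonzero constant term and hence expresses $R^{-1}$ as a polynomial in $R$, \identity, and \cupcap. Writing $R = \sum_i \lambda_i p_i$ as a spectral decomposition into orthogonal idempotents of $P_2$, the number $k$ of distinct eigenvalues becomes the governing invariant. The first substantive task is to bound $k$ and the dimensions of the higher box spaces $P_n$; here the Yang-Baxter relation, applied to triple crossings in $P_3$, together with sphericality, furnishes the inequalities that keep the generator from being too complicated.

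The core of the proof is then a case analysis on $k$, refined by the multiplicative structure of the idempotents $p_i$. When $k = 2$, the quadratic relation satisfied by $R$ is exactly the Kauffman bracket skein relation, the box spaces reduce to the Temperley-Lieb calculus, and $\mathcal{V}$ is a specialization of the Jones polynomial. The case $k = 3$ is the richest: one branch recovers the Birman-Murakami-Wenzl (Dubrovnik) relations and identifies $\mathcal{V}$ with a specialization of the Kauffman polynomial, while a second branch---detected by the presence of an intermediate Temperley-Lieb subalgebra, equivalently a two-coloring of the strands in the spirit of \cite{BJ00,BJ03}---yields the Bisch-Jones planar algebras. The possibilities that survive the constraints but match none of these three are collected into the Liu family, and a separate argument is needed to show this family is nonempty and not secretly isomorphic to an earlier case after a reparametrization.

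In each surviving case I would close the argument by verifying self-consistency of the proposed relations through the diagrammatic evaluation algorithm: reduce an arbitrary closed diagram to a scalar in two different orders of applying the skein relations, and check that the Yang-Baxter and sphericality relations make the answer independent of the order. This simultaneously pins down the admissible parameter values and rules out spurious families.

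The hardest step, as in all such skein-theoretic classifications, is precisely this consistency check together with the dimension bounds of the first step: writing down candidate relations is routine, but proving that the resulting evaluation is well-defined---so that no further exceptional planar algebra can slip through---requires delicate control of how the spectral idempotents multiply under the Yang-Baxter relation. It is exactly this rigidity that collapses the a priori infinite landscape of possibilities into the short list of four families.
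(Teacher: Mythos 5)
First, a framing point: the paper does not prove this statement at all---it is Liu's classification, quoted as Theorem \ref{Liu} and cited from \cite{Liu15}---so your attempt can only be measured against Liu's actual proof, not against anything in this paper. Measured that way, your sketch has a genuine gap at its very first step. You take as given that the generator $R$ ``is invertible and satisfies Reidemeister moves II and III.'' Definition \ref{def:YB} grants no such thing: Relations 2a/2b and 3a/3b only say that certain compositions of generators \emph{expand} as linear combinations of diagrams with unspecified coefficients; they are not the braid relations, and no inverse of the generator is posited. Indeed, after normalization the generator can be taken to be a minimal idempotent of $V_{+2}$ (as this paper does), which is never invertible under stacking, and the Bisch-Jones planar algebras---which the theorem must capture---carry no braiding of the kind you assume. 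So the spectral-decomposition-of-a-braiding framework you set up (the standard one for classifying braid group representations, e.g.\ BMW) is not available in this generality; starting from it silently excludes part of the classification you are trying to prove.

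Second, even granting your setup, your governing invariant does no work. Since $\left\{R,\, \text{identity},\, \text{cupcap}\right\}$ spans $V_{+2}$ by the singly-generated hypothesis, the number $k$ of spectral idempotents is automatically at most $3$, and $k=3$ for \emph{all} of Bisch-Jones, Kauffman, and the Liu family; your case analysis therefore only separates Temperley-Lieb-Jones from everything else, and the three branches you list inside $k=3$ are asserted rather than derived. The invariant that actually drives the classification in \cite{Liu15} (building on \cite{BJ00}, \cite{BJ03}, \cite{BJL17}) is $\dim V_3$, which is bounded above by $15$ and case-analyzed over the range $9$ through $15$: the values $9$--$12$ are the Bisch-Jones algebras (detected by the existence of a biprojection, which is the precise form of your ``intermediate Temperley-Lieb subalgebra''), while $13$--$15$ split between Kauffman and the exceptional family by a delicate analysis of the coefficients in Relations 3a/3b. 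Your sketch never engages $V_3$ at all, and the two steps you yourself flag as hardest---consistency of the diagrammatic evaluation, and nonemptiness/distinctness of the Liu family---are not verification steps tacked on at the end; they are the entire content of Liu's paper, the former requiring the skein-theoretic evaluation algorithm to be proved well-defined and the latter requiring an explicit construction. As it stands, the proposal is a plausible table of contents for such a classification, but every load-bearing step is missing or rests on an assumption that fails for part of the classified list.
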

Instead of looking at the matrix of weights of the crossing, we now consider the matrix of weights of a generating minimal projection, , of a singly-generated YPBA. Moreover, because we have generalized the notion of a knot polynomial, we can now have symmetric and non-symmetric versions of spin models. The classification of symmetric spin models can be summarized as follows: 
\begin{customthm}
Let $\mathcal{V}$ be a singly-generated Yang-Baxter planar algebra with generator $.$ Then $\mathcal{V}$ has a symmetric spin model if and only if the matrix of weights of  is the adjacency matrix of a graph $\Gamma$, and $\Gamma$ is (up to complementation) one of the following:
\begin{enumerate}[label = \roman{enumi}.]
\item the pentagon,
\item a disjoint union of complete graphs,
\item or a 3-point regular graph with $q_3 - 3q_2 + 3q_1 - q_0 \ne 0$, where the $q_i$ are parameters of the graph. 
\end{enumerate}
Moreover, $\mathcal{V}$ is a specialization of the Jones polynomial when $\Gamma$ is a complete graph, a Bisch-Jones planar algebra when it is a disjoint union of at least two complete graphs, and a specialization of the Kauffman polynomial otherwise.
\end{customthm}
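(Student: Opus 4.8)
The plan is to use Liu's classification as the backbone of the argument. Since $\mathcal{V}$ is a singly-generated Yang-Baxter planar algebra, it is one of the four listed types, and I would determine for each type exactly when a symmetric spin model exists and which graph it forces. Before splitting into cases, the first step is to translate the phrase ``$\mathcal{V}$ has a symmetric spin model'' into explicit equations on the weight matrix $W_P$ of the generator \pdiagscaled. A spin model is a matrix-valued evaluation of the planar algebra, so each defining relation of $\mathcal{V}$ becomes a constraint on $W_P$: that \pdiagscaled is a projection, that it is invariant under the relevant rotations and reflections (giving symmetry of $W_P$), and that it satisfies the Yang-Baxter exchange relation. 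I expect the projection relation to be decisive. Stacking and capping \pdiagscaled in the ways the planar structure permits should force the normalized entries of $W_P$ to be idempotent, leaving only the values $0$ and $1$; together with symmetry and a vanishing diagonal this identifies $W_P$ as the adjacency matrix of a graph $\Gamma$, which already establishes the first assertion of the theorem.

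The second step is to read the remaining relations as combinatorial conditions on $\Gamma$. The Yang-Baxter exchange relation, evaluated state by state, counts closed configurations on triples of vertices; requiring the two sides to agree for all boundary labelings should be precisely the statement that $\Gamma$ is $3$-point regular, and I would define the parameters $q_0,\dots,q_3$ as the corresponding triple-intersection numbers. The scalar $q_3-3q_2+3q_1-q_0$, which is the third finite difference of these parameters, should then arise as the coefficient controlling whether the $P$-strand resolves nontrivially; I would show that its nonvanishing is exactly the condition that $\Gamma$ does not collapse to a disjoint union of complete graphs, so that the dichotomy between item (iii) and item (ii) is governed by this quantity. Here I would also verify directly that complete graphs, disjoint unions of complete graphs, and the pentagon are each $3$-point regular, so that the three items of the list are genuinely the possible outputs.

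For the converse I would construct an explicit symmetric spin model for each graph on the list and identify the resulting planar algebra. When $\Gamma=K_n$ the construction is the Potts model and yields a specialization of the Jones polynomial; when $\Gamma$ is a disjoint union of $k\ge 2$ complete graphs I would exhibit the new discrete two-parameter family of weight matrices promised in the introduction and check that it realizes a Bisch-Jones planar algebra; for a $3$-point regular graph with $q_3-3q_2+3q_1-q_0\ne 0$, and for the pentagon, the construction recovers a spin model for the Kauffman polynomial. In each case the matching to the named planar algebra is forced by Liu's classification once the dimension of the relevant box space is computed from $\Gamma$. It remains to rule out the Liu family: I would argue that its generator relations are incompatible with $W_P$ being a $0/1$ matrix, so the Liu family admits no symmetric spin model and hence does not appear in the ``moreover'' trichotomy.

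The main obstacle, I expect, is the exceptional analysis that replaces Jaeger's hypothesis of formal self-duality. I must show that the clean conditions derived above---$3$-point regularity together with $q_3-3q_2+3q_1-q_0\ne 0$, or membership in one of the two degenerate families---are exactly equivalent to the existence of a spin model, with no further constraint hidden inside formal self-duality, and I must correctly locate the pentagon as the unique sporadic graph for which the generic nondegeneracy fails yet a spin model survives. The most delicate point is the Bisch-Jones case: verifying that the two-parameter family of weight matrices on a disjoint union of complete graphs satisfies \emph{all} of the state-sum invariances, and not merely the projection relation, is where an additional hidden constraint is most likely to surface, so this is the step I would treat with the greatest care.
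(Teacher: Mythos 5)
Your overall skeleton for the forward direction (idempotency of the generator forces a $0/1$ symmetric matrix with zero diagonal, hence an adjacency matrix; the exchange relation read state-by-state gives $3$-point regularity; the third difference $q_3-3q_2+3q_1-q_0$ appears as the coefficient that lets one invert the exchange relation; Liu's classification enters only for the ``moreover'' identification via $\dim V_3$) matches the paper's strategy. But the claim you place at the heart of the hard step is false: you assert that nonvanishing of $q_3-3q_2+3q_1-q_0$ is ``exactly the condition that $\Gamma$ does not collapse to a disjoint union of complete graphs,'' so that this scalar governs the dichotomy between items (ii) and (iii). Both directions of that equivalence fail. A disjoint union of $m\geq 2$ complete graphs $K_{k+1}$ with $k\geq 3$ is $3$-point regular with $q_3=k-2$ and $q_2=q_1=q_0=0$, so the scalar equals $k-2\neq 0$: the items of the theorem genuinely overlap, and the scalar does not detect ``disjoint union of complete graphs.'' Conversely, the Smith graphs (Schl\"afli, McLaughlin) have scalar $0$ yet are not disjoint unions of complete graphs (and admit no spin model), while a disjoint union of triangles has scalar $0$ yet does admit one. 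The scalar's true role is narrower: among strongly regular graphs that are neither triangle-free, $\Lambda$-free, anti-$\Lambda$-free, nor anti-triangle-free, the shaded-to-unshaded exchange relation (Relation 3b) holds if and only if $\Gamma$ is $3$-point regular and the scalar is nonzero.

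What is missing is the case analysis that makes this precise, and it cannot be patched by your dichotomy. Relation 3b can hold in a degenerate way, with the unshaded $P$-triangle appearing with coefficient zero, and one must prove (a) that this degenerate mechanism occurs exactly when $\Gamma$ is triangle-free or $\Lambda$-free (in the paper this is the pair of lemmas on triangle-free and $\Lambda$-free graphs, plus the lemma showing via an inconsistent linear system that in all remaining cases the coefficient is forced to be nonzero), and (b) that these degenerate cases reduce to the stated list. Step (b) needs strong regularity, which you never derive: the capping relations (Relations 1b and 2b) force $\Gamma$ to be regular and strongly regular, and only then do you get that $\Lambda$-free implies a disjoint union of complete graphs, that triangle-free with $k=2$ forces the pentagon (a strongly regular graph with $k=2$ is the pentagon, the square, or a disjoint union of triangles), and that triangle-free with $k\geq 3$ gives $q_3=q_2=q_1=0$ and $q_0>0$, hence a nonzero scalar, folding those graphs into item (iii). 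Without the strong-regularity step and the degenerate-case lemmas, your plan cannot locate the pentagon or explain why disjoint unions of triangles (scalar $=0$) still give spin models. A smaller point: the paper excludes the Liu family because a symmetric spin model forces the $2$-click rotation of $P$ to equal $P$, which fails for that family; your proposed exclusion via incompatibility with $0/1$ matrices is a different, unproven claim.
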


Let $\Gamma$ be a 3-point regular graph with $n$ vertices. Then the computing time required to verify whether $\Gamma$ is formally self-dual is at least quadratic in $n$, as one must find the multiplicity of all eigenvalues of the adjacency matrix of $\Gamma$. Since the $q_i$ formula above is based on the parameters of the graph, however, it can be checked in constant time. Thus, the latter method is computationally better than the previous requirement. 

In addition to the symmetric spin models, we prove that there is a unique non-symmetric spin model for singly-generated Yang-Baxter planar algebras:

\begin{customthm}
Let $\mathcal{V}$ be a singly-generated Yang-Baxter planar algebra. $\mathcal{V}$ has a non-symmetric spin model if and only if the matrix of weights of  is the adjacency matrix of the 3-cycle. In this case, $\mathcal{V}$ is isomorphic to the unique Bisch-Jones planar algebra with $\dim V_3 = 9$.
\end{customthm}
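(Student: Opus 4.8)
The plan is to carry out the derivation of the spin-model equations for $\mathcal{V}$ exactly as in the symmetric case above for the generator \pdiagscaled, but without imposing the symmetry $W = W^{T}$ on its matrix of weights $W$. As before, idempotency of \pdiagscaled under the Schur product forces $W$ to be a $0$--$1$ matrix with zero diagonal, so that $W$ is the adjacency matrix of a directed graph $\Gamma$; and the exchange relations that previously produced $3$-point regularity now give their directed analogues, in particular that $\Gamma$ has constant in-degree and out-degree $k$. First I would isolate, from these relations together with the Yang--Baxter equation for the braiding of $\mathcal{V}$, the single quadratic identity constraining the non-symmetric part of $W$; after fixing the scalars by the normalization (loop) conditions, this identity reads $W^{2} = W^{T}$.

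The heart of the proof is then a one-line counting argument applied to $W^{2} = W^{T}$. Summing over a row, the left side gives $\sum_{j}(W^{2})_{ij} = k^{2}$, since each of the $k$ out-neighbours of $i$ has exactly $k$ out-neighbours, while the right side gives $\sum_{j}(W^{T})_{ij} = k$, the in-degree of $i$; hence $k^{2} = k$ and $k = 1$. A directed graph with in-degree and out-degree $1$ is a disjoint union of directed cycles, and $W^{2} = W^{T}$ restricted to a cycle of length $m$ holds exactly when $3 \equiv 0 \pmod{m}$, which with the zero diagonal of $W$ forces $m = 3$, so every component is a directed $3$-cycle. Finally the remaining spin-model condition---the rank-$3$ coherence requirement $I + W + W^{T} = J$, which plays here the role that connectedness and $3$-point regularity play in the symmetric classification---eliminates all but one component, so $\Gamma$ is the directed $3$-cycle and $W$ is its circulant adjacency matrix. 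The main obstacle is precisely the two reductions that bracket this count: showing that the exchange relations collapse to $W^{2} = W^{T}$ once $W \ne W^{T}$ (so that the symmetric terms in $I$, $W$, and $J$ drop out), and showing that the coherence condition forbids more than one component.

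For the converse and the final identification it remains to verify that the directed $3$-cycle genuinely produces a spin model and to place the resulting algebra within Liu's classification. Existence is a direct check: the circulant $W$ satisfies $W^{T} = W^{2}$, $W^{3} = I$, and $I + W + W^{T} = J$, and feeding these into the recorded equations confirms every relation and fixes the loop value; since $W \ne W^{T}$ the model is genuinely non-symmetric. A direct count of the spin-model evaluations of $3$-boxes gives $\dim V_{2} = 3$ and $\dim V_{3} = 9$. As $\mathcal{V}$ is a singly-generated Yang--Baxter planar algebra, Liu's classification restricts it to a Jones specialization, a Bisch--Jones planar algebra, a Kauffman specialization, or a member of the Liu family; the values $\dim V_{2} = 3$ and $\dim V_{3} = 9$ exclude the Jones, Kauffman, and Liu cases and single out the unique Bisch--Jones planar algebra with $\dim V_{3} = 9$, as claimed.
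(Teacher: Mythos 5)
There is a genuine gap at precisely the step you yourself flag as ``the main obstacle'': the claim that the defining relations, after fixing scalars by the normalization conditions, collapse to the single quadratic identity $W^{2} = W^{T}$. This identity cannot be isolated at the stage you propose, because it is not a consequence of the regularity and exchange constraints plus normalization. What Relation 2b actually gives in the non-symmetric case is $W^{2} = k'I + \lambda W + \mu W^{T}$ with undetermined constants; since a tournament has no $2$-cycles the diagonal forces $k' = 0$, leaving $W^{2} = \lambda W + \mu W^{T}$, and the row-sum count yields only $\lambda + \mu = k$. Your identity is the special case $\lambda = 0$, $\mu = 1$, and nothing you have established forces it: the Paley tournament on $7$ vertices (arcs $i \to j$ when $j - i$ is a quadratic residue mod $7$) is a regular tournament with $k = 3$ satisfying $W^{2} = W + 2W^{T}$, so it passes all of the Relation 1 and Relation 2 constraints while violating $W^{2} = W^{T}$. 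Eliminating such doubly regular tournaments is exactly the content of the Yang--Baxter relation (Relation 3b), and that elimination is where the work of the theorem lies. The paper does it by a different mechanism: Proposition \ref{prop:digraph} reduces to regular tournaments, Lemma \ref{lem:di} shows that for $k \geq 2$ both the cyclic and the transitive triangle occur as induced subgraphs, and then the state sums of Relation 3b over these configurations produce a system of linear equations in the coefficients $w_i$ that is inconsistent. Your proposal contains no argument playing this role, so the chain ``relations $\Rightarrow W^{2} = W^{T} \Rightarrow k = 1$'' is unsupported at its first link, and the counting, cycle analysis, and coherence steps that follow all depend on it.

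The rest of your outline is sound and essentially matches the paper. Once $k = 1$ is known, regularity of the tournament gives $n = 2k + 1 = 3$ immediately, so you do not need the detour through disjoint unions of directed cycles: the tournament condition $I + W + W^{T} = J$ is automatic from $\identitys = \cupcaps + \pdiag + \qdiag$ with $C_Q = C_P^{T}$, and it already rules out multiple components. The converse verification for the $3$-cycle is the same explicit check the paper performs (Relation 2b reduces to $J - I$, and both sides of Relations 3a and 3b vanish because each vertex has a single in-edge and out-edge), and the identification via $\dim V_{3} = 9$, Proposition \ref{inj}, and Liu's classification is exactly the paper's concluding corollary. But these are the routine parts; the missing derivation above is the theorem.
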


 By considering all singly-generated YBPAs, we were able to give a shorter and conceptually clearer proof of the classifications found in \cite{Kau90}, \cite{Jae95}, and \cite{Jae95b}. Moreover, we discovered a new two-parameter family of spin models for specializations of Bisch-Jones planar algebras.
 
 This paper is organized into three sections: background, classification of symmetric spin models, and classification of non-symmetric spin models. The former section begins with defining some standard terms about undirected and directed graphs, which will be used in our classification for symmetric and non-symmetric spin models, respectively. Next, it will briefly define some concepts from planar algebras and will end with a formal definition of a spin model for a planar algebra. 

\subsection{Source Code}
All diagrams appearing in this paper can be found in the ArXiv source code, under the ``diagrams" subdirectory. 

\subsection{Acknowledgements}
The author would like to thank Noah Snyder for his support and advice throughout the writing of this paper. In addition, he would like to thank Dietmar Bisch, Vaughn Jones, and Zhengwei Liu for their results which undergird the classification in this paper. The author was also supported through NSF grant DMS-1454767.
\section{Background}

\subsection{Graph Theory}\label{graph}

The following definitions are standard from graph theory:

\begin{definition}
Let $\Gamma$ be a graph with $n$ vertices. The graph complement of $\Gamma,$ denoted $\Gamma^c$, is a graph of $n$ vertices such that any two distinct vertices in $\Gamma^c$ share an edge if and only if they do not share an edge in $\Gamma.$  
\end{definition}

\begin{definition}
Let $\Gamma$ be a graph and $W$ be a subset of the vertices of $\Gamma.$ The induced subgraph of $W$ is the subgraph of $\Gamma$ with vertex set $W$ and edge set consisting of the edges of $\Gamma$ with both endpoints in $W$.
\end{definition}

\begin{definition}\label{def:graphtypes}
Up to rotation, the following are the only possible subgraphs of distinct points $\{a,b,c\}$ for any graph $\Gamma.$
\begin{center}
\includegraphics[valign = c]{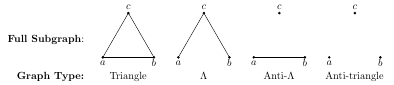}
\end{center}
A graph is triangle-free (resp., $\Lambda$-free, anti-$\Lambda$-free, or anti-triangle-free) if it has no induced subgraphs of the first (resp., second, third, fourth) type. 
\end{definition}

\begin{lemma}\label{lem:free}
A graph is is triangle-free if and only if its complement is anti-triangle-free. In addition, a graph is $\Lambda$-free if and only if its complement if anti-$\Lambda$-free.
\end{lemma}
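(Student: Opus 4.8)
The plan is to reduce both equivalences to a single structural observation: complementation commutes with passing to induced subgraphs on a fixed vertex set. First I would make this precise. For any three distinct vertices $\{a,b,c\}$, the induced subgraph of $\Gamma^c$ on $\{a,b,c\}$ equals the complement --- taken within these three vertices --- of the induced subgraph of $\Gamma$ on $\{a,b,c\}$. This is immediate from the definitions: a pair among $\{a,b,c\}$ is adjacent in $\Gamma^c$ exactly when it fails to be adjacent in $\Gamma$, and the induced subgraph records precisely the adjacencies among these three vertices.

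Next I would record the complementary pairing among the four types of Definition~\ref{def:graphtypes}. Because complementation within three vertices interchanges edges and non-edges, the triangle (three edges) and the anti-triangle (no edges) are complements of one another, while the $\Lambda$ (two edges) and the anti-$\Lambda$ (one edge) are complements of one another. Combining this with the observation above, the induced subgraph of $\Gamma$ on $\{a,b,c\}$ is a triangle if and only if the corresponding induced subgraph of $\Gamma^c$ is an anti-triangle, and it is a $\Lambda$ if and only if the corresponding induced subgraph of $\Gamma^c$ is an anti-$\Lambda$.

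Finally I would quantify over all triples of vertices to obtain the global statements. The graph $\Gamma$ has no induced triangle precisely when $\Gamma^c$ has no induced anti-triangle, which is exactly the assertion that $\Gamma$ is triangle-free if and only if $\Gamma^c$ is anti-triangle-free; the $\Lambda$ versus anti-$\Lambda$ equivalence follows verbatim. I do not expect any genuine obstacle here, since the whole argument rests on the commutativity of complementation with induced subgraphs, after which both equivalences are purely formal. The only point demanding a moment of care is reading the type-matching correctly off Definition~\ref{def:graphtypes}, so that ``triangle'' is paired with ``anti-triangle'' and ``$\Lambda$'' with ``anti-$\Lambda$,'' rather than crossing the pairings.
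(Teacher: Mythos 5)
Your proof is correct, and it is simply a careful spelling-out of the argument the paper dismisses as ``obvious from the definitions'': complementation commutes with taking induced subgraphs on three vertices, and it pairs the triangle (three edges) with the anti-triangle (no edges) and the $\Lambda$ (two edges) with the anti-$\Lambda$ (one edge). There is no gap and no genuinely different route here --- you have just made the paper's implicit reasoning explicit.
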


\begin{proof}
These statements are obvious from the definitions.
\end{proof}

\begin{definition}\label{def:bipartite}
The complete bipartite graph $K_{m,n}$ is a graph composed of $n$ even vertices and $m$ odd vertices such that two vertices are adjacent if and only if they have opposite parity. In particular $\ast_n := K_{1,n}$ is the graph with one odd vertex, which we call $\star$, and a set $S$ of $n$ even vertices. 
\end{definition}

\begin{definition}\label{def:1regular}
A graph is said to be regular if every vertex has the same valence  (i.e. each vertex has the same number of neighbors).
\end{definition}

\begin{definition}\label{def:strong}
A graph $\Gamma$ is strongly regular if given any two (not necessarily distinct) vertices $a$ and $b$ of $\Gamma,$ the number of common neighbors they share depends only on whether $a = b$, $a$ is adjacent to $b$, or $a$ is not adjacent to $b$. These parameters are denoted, $k$, $\lambda$, and $\mu,$ respectively. For shorthand, $\Gamma$ will often be described as a $\text{srg}(n,k,\lambda,\mu),$ where $n$ is the number of vertices.
\end{definition}

We can also think of a strongly regular graph as having a fixed number of induced subgraphs of the following types depending on how $a$ and $b$ relate to each other:
\begin{center}
    \includegraphics[valign = c]{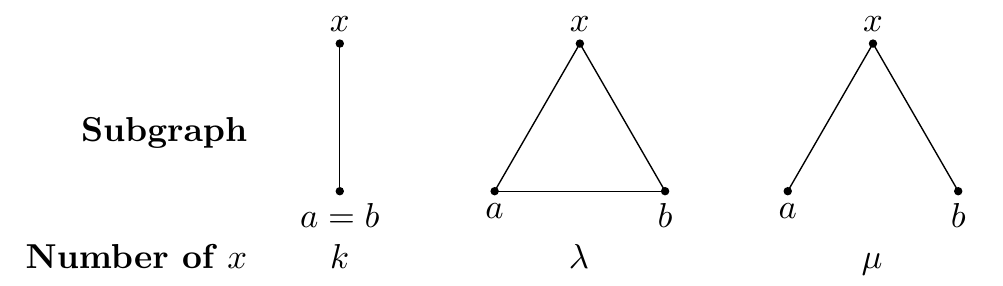}
\end{center}
The first diagram implies that a strongly regular graph is also regular. For a fixed $a$ and $b$ that are adjacent, a strongly regular graph has $\lambda$ triangles containing both vertices; for non-adjacent $a$ and $b$, there are $\mu$ $\Lambda$s containing both vertices.

\begin{lemma}\label{lem:cycle}
Let $\Gamma$ be a strongly regular graph with $k = 2$.  Then $\Gamma$ is isomorphic to either the pentagon, the square, or a disjoint union of triangles. 
\end{lemma}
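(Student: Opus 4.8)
The plan is to first translate the hypothesis $k = 2$ into a structural statement and then use the remaining strong-regularity parameters $\lambda$ and $\mu$ to cut down the possibilities. By the definition of strong regularity, $k$ is the common valence of every vertex, so $k = 2$ says precisely that $\Gamma$ is $2$-regular; a $2$-regular graph is a disjoint union of cycles. Thus the problem reduces to deciding which disjoint unions of cycles can be strongly regular.

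Next I would pin down $\lambda$. Fix an edge $ab$. Since $a$ has only the two neighbors $b$ and some $x$, and $b$ has only the two neighbors $a$ and some $y$, any vertex adjacent to both $a$ and $b$ must equal $x = y$; hence $\lambda \in \{0,1\}$, with $\lambda = 1$ exactly when every edge lies in a triangle. This splits the argument into two cases according to the value of $\lambda$. In the case $\lambda = 1$, every edge of every component must sit in a triangle, which for a cycle forces its length to be $3$; therefore $\Gamma$ is a disjoint union of triangles, and one checks that $\mu = 0$ is consistent (vacuously within a triangle and across components).

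In the case $\lambda = 0$, no edge lies in a triangle, so every component is a cycle of length at least $4$. I would then invoke $\mu$ twice: first to force connectivity, and then to bound the length. For connectivity, if $\Gamma$ had two components then a pair of vertices in distinct components would give $\mu = 0$, whereas a pair at distance $2$ inside a single cycle of length $\geq 4$ always has a common neighbor, forcing $\mu \geq 1$---a contradiction. Hence $\Gamma$ is a single cycle $C_m$ with $m \geq 4$.

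The main obstacle, and the decisive computation, is bounding $m$. Writing the vertices as $v_0,\dots,v_{m-1}$, a pair at distance $2$ has exactly one common neighbor when $m \geq 5$ (and two when $m = 4$), while for $m \geq 6$ a pair at distance $3$ is non-adjacent and has no common neighbor. Comparing these shows $\mu$ cannot be constant once $m \geq 6$, so $m \in \{4,5\}$, i.e. $\Gamma$ is the square or the pentagon. The only delicate points are the small-$m$ coincidences---in $C_4$ a distance-$2$ pair yields two common neighbors, and in $C_5$ distance $3$ collapses to distance $2$---which is exactly why $C_4$ and $C_5$ survive while all longer cycles are excluded.
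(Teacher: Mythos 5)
Your proof is correct, and at its core it rests on the same two computations as the paper's: in a cycle of length at least six, a non-adjacent pair at distance two has a common neighbor while a non-adjacent pair at distance three has none, and a pair of vertices in different components forces $\mu = 0$. The organization, however, is genuinely different. The paper first shows every connected component is a cycle, then rules out $C_m$ for $m \geq 6$ by the distance-$2$/distance-$3$ comparison (so each component is a triangle, square, or pentagon), and only then treats disconnected graphs, where $\mu = 0$ excludes square and pentagon components. You instead split at the outset on $\lambda \in \{0,1\}$ --- a parameter the paper never invokes. This buys you two things: the disjoint-union-of-triangles case is disposed of before any $\mu$ analysis, and mixed unions (say, a triangle plus a square) are excluded automatically because $\lambda$ could not be constant on them, whereas the paper must fold that possibility into its $\mu = 0$ argument. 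What you give up is minor: the paper's route isolates ``each component is a triangle, square, or pentagon'' as a standalone intermediate statement and records the parameters $\text{srg}(3,2,1,0)$, $\text{srg}(4,2,0,2)$, $\text{srg}(5,2,0,1)$ of the three survivors, which you only gesture at; and it proves by hand that a connected $2$-regular graph is a cycle, where you cite the standard fact. Neither difference is a gap, since the lemma asserts only the forward implication.
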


\begin{proof}
Suppose that $\Gamma$ is a strongly regular graph with $k = 2$. Consider a connected component of $\Gamma$ with $m$ vertices $\{v_1,\dots,v_m\}$. Since $k=2$, every vertex has two neighbors so $m \geq 3$. Without loss of generality, suppose that $v_1$ is adjacent to $v_2$ and that $v_2$ is adjacent to $v_3.$ Since this graph is connected and regular with $k = 2$, $v_3$ is adjacent to $v_1$ if and only if $m = 3$. Now assume without loss of generality that $v_3$ is connected to $v_4$. Continuing this process, we see that $v_i$ is connected to $v_1$ if and only if $i = m.$ Thus, every connected component of $\Gamma$ is isomorphic to the $n$-gon, also called $C_n$. 

Consider $C_n$. Note that the triangle ($\text{srg}(3,2,1,0)$), square ($\text{srg}(4,2,0,2)$), and pentagon ($\text{srg}(5,2,0,1)$) are all strongly regular. Suppose $n\geq 6$. Let $\{v_1,\dots,v_n\}$ be the vertices of $C_n$ such that $v_i$ is adjacent to $v_{i+1}$ and $v_n$ is adjacent to $v_1.$ Thus, $v_1$ is not adjacent to $v_3$ or $v_4$. Since $n\geq 6$, though, we see that $v_1$ and $v_3$ have a common neighbor, $v_2$, while $v_1$ and $v_4$ have no common neighbors. Thus, $C_n$ is not strongly regular, and so each connected component of $\Gamma$ must be a triangle, square, or pentagon. 

Suppose that $\Gamma$ has at least two connected components. Thus, there exist two non-adjacent points which share no common neighbors. Since $\Gamma$ is strongly regular, it must be true that any two non-adjacent points share no common neighbors. Note that any two non-adjacent points on a square or pentagon have a common neighbor. A disjoint union of $m$ triangles, on the other hand, is still strongly regular with parameters $\text{srg}(3m, 2, 1, 0).$ Thus, if $\Gamma$ has at least two connected components it must be a disjoint union of triangles. In conclusion, if $\Gamma$ is a strongly regular graph with $k = 2$, then $\Gamma$ is the pentagon, the square, or a disjoint union of triangles, which completes our proof.  
\end{proof}

Strongly regular graphs have been extensively studied and much is known about them. The graphs that appear in this classification, though, satisfy a stronger condition than strong regularity, which we define presently in the style of \cite{Kup97}:

\begin{definition}\label{def:nptreg}
A graph is $n$-point regular if the number of common neighbors of any $n$ points depends only on how those $n$ points relate to each other.
\end{definition}

It is easy to see that 1-point regular graphs are just regular graphs and 2-point regular graphs are strongly regular graphs.

\begin{example}[3-point regular graphs] \label{threeptreg}
A graph is 3-point regular if the number of vertices connected to any 3 vertices is solely determined by how those three vertices relate. Up to rotation, the only graphs possible for any three distinct points are
\begin{center}
\includegraphics[valign = c]{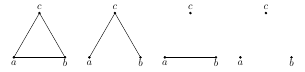}
\end{center}
We define the parameters of these graphs by

\begin{center}
\includegraphics[valign = c]{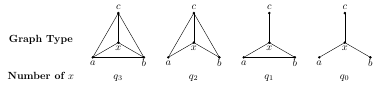}
\end{center}
These graphs have been studied extensively and partially classified. More information on them can be found in \cite{CGS78}.
\end{example}

\begin{proposition}\label{prop:lfree}
Let $\Gamma$ be a strongly regular, $\Lambda$-free graph with parameters $(n,k,\lambda,\mu)$. Then $\Gamma$ is isomorphic to $m K_{k+1},$ a disjoint union of $m = \dfrac{n}{k+1}$ complete graphs of size $k+1$.
\end{proposition}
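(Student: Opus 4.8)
The plan is to show that the $\Lambda$-free hypothesis alone forces $\Gamma$ to be a disjoint union of cliques, and then to pin down the common clique size using regularity. The key observation is that $\Lambda$-freeness is exactly the statement that the adjacency relation, together with equality, is transitive: if $a$, $b$, $c$ are distinct vertices with $a$ adjacent to $b$ and $b$ adjacent to $c$, then the induced subgraph on $\{a,b,c\}$ is either a triangle or a $\Lambda$ in the sense of Definition \ref{def:graphtypes}, and since $\Gamma$ has no induced $\Lambda$, the vertices $a$ and $c$ must be adjacent.

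First I would define a relation $\approx$ on the vertices of $\Gamma$ by declaring $a \approx b$ when $a = b$ or $a$ is adjacent to $b$. This relation is reflexive and symmetric immediately, and the observation above shows it is transitive; hence $\approx$ is an equivalence relation. I would then check that each equivalence class is a clique: if $a$ and $c$ lie in the same class and are distinct, then $a \approx c$ means precisely that $a$ is adjacent to $c$. Conversely, vertices in different classes are non-adjacent, so the classes are exactly the connected components of $\Gamma$, each of which is a complete graph.

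Finally I would invoke regularity. Since $\Gamma$ is strongly regular it is in particular regular (Definition \ref{def:strong}), so every vertex has the same valence $k$. A vertex sitting inside a complete component on $s$ vertices has valence $s - 1$, whence every component has exactly $k+1$ vertices. Counting vertices gives $n = m(k+1)$, so $m = \tfrac{n}{k+1}$ and $\Gamma \cong m K_{k+1}$, as claimed.

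I do not anticipate a genuine obstacle here: the content is entirely the transitivity observation, which is the standard characterization of $P_3$-free graphs as disjoint unions of cliques. The only points requiring any care are verifying that the equivalence classes coincide with the connected components and noting that regularity alone—rather than the full strength of strong regularity—suffices to equalize the component sizes.
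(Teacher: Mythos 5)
Your proof is correct, and it reorganizes the argument in a way that differs mildly but genuinely from the paper's. The paper splits into cases $k = 0$, $k = 1$, and $k \geq 2$, and for $k \geq 2$ argues locally: fix a vertex $x$ with neighbors $v_1, \dots, v_k$, use $\Lambda$-freeness to force the $v_i$ to be pairwise adjacent, and then use regularity (each vertex has only $k$ neighbors) to conclude the component containing $x$ is exactly $K_{k+1}$. You instead extract the full clique decomposition from $\Lambda$-freeness alone, by observing that it is equivalent to transitivity of the relation ``equal or adjacent,'' so that the equivalence classes are simultaneously the cliques and the connected components; regularity enters only once, at the end, to equalize the component sizes. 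The underlying combinatorial fact is the same in both proofs (two neighbors of a common vertex must be adjacent), but your packaging buys two things: it handles all $k$ uniformly, eliminating the paper's case analysis, and it makes visible that the clique structure uses none of the strong regularity hypothesis --- only ordinary regularity is needed to fix the sizes, which the paper's interleaved argument obscures. The paper's version, in exchange, is more concrete and immediately exhibits the component of a given vertex. Both are complete proofs of the proposition.
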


\begin{proof}
Let $\Gamma$ be as described above. When $k = 0$, the graph has no edges, which can be thought of as a disjoint union of $m = \dfrac{n}{0+1} = n$ complete graphs of size $1$. When $k=1,$ the graph must be a disjoint union of $m = \dfrac{n}{2}$ complete graphs of size $2.$

Now suppose that $k \geq 2$. Let $x$ be a vertex of $\Gamma$ and $\{v_1,\dots,v_k\}$ be the vertices adjacent to $x$. Since $\Gamma$ is $\Lambda$-free, $v_i$ must also be adjacent to $v_j$ for all $i$ and $j$. Since each vertex only has $k$ neighbors, we see that the $v_i$ have no additional neighbors. Thus, the connected component containing $x$ is isomorphic to $K_{k+1}$. Since an connected component of $\Gamma$ must be isomorphic to $K_{k+1}$, $\Gamma$ must be a disjoint union of $m = \dfrac{n}{k+1}$ complete graphs of size $k+1$ when $k\geq 2$.
\end{proof}

\begin{corollary}\label{lfree}
Let $\Gamma$ be a strongly regular graph that is $\Lambda$-free. Then $\Gamma$ is 3-point regular. 
\end{corollary}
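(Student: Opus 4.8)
The plan is to deduce the corollary directly from Proposition \ref{prop:lfree}, which already pins down the structure of $\Gamma$ completely: since $\Gamma$ is strongly regular and $\Lambda$-free, it is isomorphic to $mK_{k+1}$, a disjoint union of $m$ complete graphs each on $k+1$ vertices. All that remains is to verify that such a disjoint union of complete graphs is $3$-point regular, i.e., that the number of common neighbors of any three distinct vertices depends only on the isomorphism type (up to rotation) of the induced subgraph they span.

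First I would record the basic adjacency facts about $mK_{k+1}$: two distinct vertices are adjacent if and only if they lie in the same $K_{k+1}$-component, and a vertex is a common neighbor of a collection of vertices exactly when it lies in the same component as all of them. Consequently, the set of common neighbors of a family of vertices is nonempty only if those vertices all lie in a single component, in which case it is precisely the set of remaining vertices of that component.

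Next I would run through the possible induced subgraphs on three distinct points $\{a,b,c\}$ from Definition \ref{def:graphtypes}. For the triangle type, all three vertices lie in a common component, so their common neighbors are the other $k+1-3 = k-2$ vertices of that component; this count depends only on $k$. For the single-edge type (anti-$\Lambda$), the two adjacent vertices share a component while the third lies in a different one, so there can be no common neighbor and the count is $0$. For the empty type (anti-triangle), the three vertices are pairwise non-adjacent, hence lie in three distinct components, again yielding $0$ common neighbors. In every case the count is determined by the configuration type alone.

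The one configuration that would break this argument is the $\Lambda$ type, where $a$ is adjacent to $b$ and $b$ to $c$ but $a$ is not adjacent to $c$; in a disjoint union of complete graphs this cannot occur, since $a$ and $c$ would then share the component of $b$ and hence be adjacent. This is exactly the content of $\Lambda$-freeness, so the $\Lambda$ type is vacuous and imposes no constraint. Having accounted for all four types, I would conclude that $\Gamma$ is $3$-point regular. There is no serious obstacle here; the only point requiring care is to observe that the $\Lambda$ case is empty, rather than attempting to assign it a value.
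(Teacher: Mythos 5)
Your proof is correct and follows essentially the same route as the paper's: invoke Proposition \ref{prop:lfree} to identify $\Gamma$ with $mK_{k+1}$, then verify $3$-point regularity by casework on the induced subgraph spanned by three vertices. Your treatment is slightly more explicit (enumerating all four types from Definition \ref{def:graphtypes} and noting the $\Lambda$ type is vacuous), whereas the paper collapses the non-pairwise-adjacent cases into one and handles $k\leq 2$ separately, but the substance is identical.
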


\begin{proof}
By Proposition \ref{prop:lfree}, If $\Gamma$ is $\Lambda$-free then $\Gamma$ is a disjoint union of $m$ complete graphs of size $k+1$. If $k \leq 2$ then the graph is vacuously 3-point regular. Suppose $k \geq 3$. Let $a$, $b$, and $c$ be any three vertices of $\Gamma$. Because $\Gamma$ is a disjoint union of complete graphs, there are no vertices adjacent to $a$, $b$, and $c$ if they are not pairwise adjacent. If $a$, $b$, and $c$ are all adjacent, there are $k-2$ vertices adjacent to all three. Thus, by definition $\Gamma$ is 3-point regular, as desired.
\end{proof}

\begin{lemma}\label{free}
Let $\Gamma$ be a strongly regular graph with parameters $(n,k,\lambda, \mu)$. Then up to complementation every graph can be classified as
\begin{enumerate}[label = \roman{enumi}.]
\item a disjoint union of complete graphs;
\item triangle-free, and neither $\Lambda$-free nor anti-$\Lambda$-free; or
\item neither triangle-free, $\Lambda$-free, anti-$\Lambda$-free, nor anti-triangle-free. 
\end{enumerate}
\end{lemma}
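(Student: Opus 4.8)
The plan is to reduce the statement to a short logical case analysis over the four ``free'' conditions of Definition \ref{def:graphtypes}, using two ingredients already available. The first is Lemma \ref{lem:free}, which tells us that passing to the complement interchanges the triangle-free and anti-triangle-free conditions, and simultaneously interchanges the $\Lambda$-free and anti-$\Lambda$-free conditions (and, by taking complements of those equivalences, the corresponding negations as well). The second is Proposition \ref{prop:lfree}, which identifies the strongly regular $\Lambda$-free graphs as exactly the disjoint unions of complete graphs. I would also invoke the standard fact that the complement of a strongly regular graph is again strongly regular, so that Proposition \ref{prop:lfree} may legitimately be applied to $\Gamma^c$.

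First I would dispose of case (i), which captures everything that is $\Lambda$-free or anti-$\Lambda$-free. If $\Gamma$ is $\Lambda$-free, Proposition \ref{prop:lfree} directly exhibits it as $mK_{k+1}$, a disjoint union of complete graphs. If instead $\Gamma$ is anti-$\Lambda$-free, then by Lemma \ref{lem:free} its complement $\Gamma^c$ is $\Lambda$-free, and since $\Gamma^c$ is strongly regular, Proposition \ref{prop:lfree} makes $\Gamma^c$ a disjoint union of complete graphs; thus up to complementation $\Gamma$ again falls into case (i).

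Next I would assume $\Gamma$ is neither $\Lambda$-free nor anti-$\Lambda$-free and split on the triangle conditions. If $\Gamma$ is triangle-free, then under the standing assumptions it is precisely case (ii). If $\Gamma$ is anti-triangle-free, then Lemma \ref{lem:free} makes $\Gamma^c$ triangle-free, and the same lemma, applied to the two standing assumptions, shows $\Gamma^c$ is neither $\Lambda$-free nor anti-$\Lambda$-free; hence $\Gamma^c$ is case (ii), so up to complementation $\Gamma$ is as well. Finally, if $\Gamma$ is neither triangle-free nor anti-triangle-free, then together with the standing assumptions it fails all four free conditions, which is exactly case (iii). Since every graph is triangle-free, anti-triangle-free, or neither, these subcases are exhaustive and the trichotomy is complete.

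The argument is essentially bookkeeping rather than computation, so the only genuine care needed is in tracking the ``up to complementation'' reductions: one must make sure each complementary reduction in cases (i) and (ii) pairs the correct two conditions via Lemma \ref{lem:free}, and one must confirm that the three resulting cases really are exhaustive. The single external input I would flag explicitly is that $\Gamma^c$ is strongly regular whenever $\Gamma$ is, since this is what licenses applying Proposition \ref{prop:lfree} to the complement; everything else follows from the two cited results.
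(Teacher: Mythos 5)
Your proof is correct and follows essentially the same route as the paper's: both argue by cases on the four ``free'' conditions, using Lemma \ref{lem:free} to handle complementation and Proposition \ref{prop:lfree} to identify the $\Lambda$-free case with disjoint unions of complete graphs. Your explicit flagging of the fact that $\Gamma^c$ is again strongly regular (which the paper uses implicitly when applying Proposition \ref{prop:lfree} to the complement) is a welcome extra bit of care, but it does not change the substance of the argument.
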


\begin{proof}
Let $\Gamma$ be as above. If $\Gamma$ or $\Gamma^c$ is $\Lambda$-free then by Proposition \ref{prop:lfree}, $\Gamma$ or its complement is a disjoint union of $m$ complete graphs of size $k+1$. Suppose that neither $\Gamma$ nor $\Gamma^c$ is $\Lambda$-free. By Lemma \ref{lem:free}, we can also assume that neither $\Gamma$ nor $\Gamma^c$ is anti-$\Lambda$-free. If $\Gamma$ or its complement is triangle-free and neither $\Lambda$-free nor anti-$\Lambda$-free, then it falls in the second category. If not, then we can also assume neither $\Gamma$ nor its complement is anti-triangle-free by Lemma \ref{lem:free}, which completes our proof.
\end{proof}

\subsubsection{Directed graphs}

The classification of non-symmetric spin models deals with directed graphs. We define some well-known terms below: 

\begin{definition}
A directed graph is composed of a finite number of vertices and edges. In addition, the edges have decorations of the following forms:
\begin{center}
\includegraphics[valign = c]{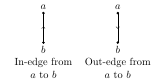}
\end{center}
When an edge goes from $a$ to $b$ (i.e. an out-edge from $a$ to $b$), we say that $a$ is adjacent to $b$.
\end{definition}

\begin{definition}\label{def:tournament}
A tournament is a directed graph where for all distinct $a$ and $b$ exactly one of the following holds: $a$ is adjacent to $b$ or $b$ is adjacent to $a$.
\end{definition}
The name ``tournament" was given to this graph because it represents the outcomes of a round-robin tournament (i.e. a tournament in which every pair of players play exactly one match). If player $a$ beats player $b$, then there is an arrow from $b$ to $a$. Since every player plays exactly once and there are no ties, the directed graph that appears is exactly the one described above.  

\begin{definition}\label{def:direg}
A directed graph is regular if every vertex has $k$ in-edges and $k$ out-edges.
\end{definition}

\begin{lemma}\label{lem:di}
Let $\Gamma$ be a tournament. A vertex in a regular tournament has $k = \dfrac{n-1}{2}$ in-edges and out-edges. If $k\geq 2$, the following subgraphs appear in $\Gamma$:
\begin{center}
    \includegraphics[valign = c]{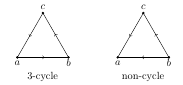}
\end{center}
for some $a,$ $b,$ and $c.$
\end{lemma}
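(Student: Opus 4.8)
The plan is to prove the two assertions separately: first the degree count, then the existence of the displayed three-vertex subtournaments.

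For the degree count, fix a vertex $v$. By Definition~\ref{def:tournament}, for each of the other $n-1$ vertices $w$ there is exactly one edge between $v$ and $w$, so the in-edges and out-edges at $v$ partition the remaining $n-1$ vertices. If $\Gamma$ is regular with parameter $k$ in the sense of Definition~\ref{def:direg}, then $v$ has $k$ in-edges and $k$ out-edges, whence $2k = n-1$ and $k = \dfrac{n-1}{2}$. This is immediate and I do not expect any difficulty here.

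For the subgraphs, the key observation is that up to isomorphism there are only two tournaments on three vertices --- the cyclic triangle ($a\to b\to c\to a$) and the transitive triangle (a unique source beating both others and a unique sink losing to both) --- and the displayed diagram records exactly these. I would show both occur by a counting argument rather than by case analysis. Call the vertex of out-degree two inside a transitive triple its \emph{source}; every transitive triple has a unique source, and a vertex $v$ is the source of exactly $\binom{d^+_v}{2}$ transitive triples, where $d^+_v$ is its out-degree (choose any two vertices it beats; the direction of the edge between them is irrelevant, as the resulting triple is transitive with source $v$ either way). Summing over $v$ and using $d^+_v = k$ for all $v$, the number of transitive triples in a regular tournament is $n\binom{k}{2}$. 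Subtracting from the total $\binom{n}{3}$ gives the number of cyclic triples; substituting $n = 2k+1$ and simplifying yields $\dfrac{k(k+1)(2k+1)}{6}$.

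It then remains only to check positivity. The transitive count $n\binom{k}{2}$ is positive precisely when $k\ge 2$, and the cyclic count $\dfrac{k(k+1)(2k+1)}{6}$ is positive for every $k\ge 1$; hence when $k\ge 2$ both types of three-vertex subtournament genuinely occur, producing the required vertices $a$, $b$, $c$ in each case. I expect the only real obstacle to be the bookkeeping in the source-counting step --- making sure each transitive triple is counted exactly once and that the edge within a beaten pair does not affect the count --- together with the algebraic simplification of $\binom{n}{3} - n\binom{k}{2}$; both are routine. (If a more hands-on argument is preferred, a cyclic triangle can instead be produced directly by observing that a regular tournament cannot be transitive, since a transitive tournament has out-degrees $0,1,\dots,n-1$ rather than a constant, and any non-transitive tournament contains a $3$-cycle; but the uniform counting argument is cleaner and handles both configurations at once.)
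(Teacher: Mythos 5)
Your proof is correct, but it takes a genuinely different route from the paper's. The degree count is the same in both. For the three-vertex subgraphs, the paper argues locally and constructively: since $k\geq 2$, pick a vertex $a$ with two out-neighbors $b$ and $c$; the tournament edge between $b$ and $c$ immediately yields the transitive (non-cycle) triple, and the $3$-cycle comes from a pigeonhole argument --- the $k$ out-edges of $b$ cannot all land among the $k-1$ out-neighbors of $a$ other than $b$, so $b$ must beat some in-neighbor $c$ of $a$, closing the cycle $a\to b\to c\to a$. You instead count both configurations globally: every $3$-subset is either transitive or cyclic, transitive triples number $\sum_v \binom{d_v^+}{2} = n\binom{k}{2}$ (counting each by its unique source, which is valid since the direction of the edge inside a beaten pair is irrelevant), and hence cyclic triples number $\binom{n}{3} - n\binom{k}{2} = \frac{k(k+1)(2k+1)}{6}$ after substituting $n = 2k+1$; positivity of both counts for $k\geq 2$ finishes the argument. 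Your computation checks out. What your approach buys is exact enumeration of both configurations (strictly more than the lemma asks for) and no case analysis; what the paper's buys is elementarity --- it needs no counting identity, uses regularity only once (in the pigeonhole step), and explicitly exhibits the vertices $a$, $b$, $c$, which matches how the lemma is invoked in the non-symmetric classification. Note also that your pigeonhole-free route to the $3$-cycle actually uses regularity only for the transitive count; the cyclic count is positive already for $k\geq 1$, a small refinement the paper's proof does not give.
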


\begin{proof}
Since $\Gamma$ is a tournament, the sum of the in-edges and out-edges for any vertex must be $n-1.$ If $\Gamma$ is regular,the number of in-edges and out-edges must be the same. Thus, $k = \dfrac{n-1}{2}.$ Suppose now that $k \geq 2.$ Then each edge has at least two in-edges and out-edges. Let $a$ be such a vertex and let $a$ be adjacent to $b$ and $c.$ Because $\Gamma$ is a tournament, either $b$ is adjacent to $c$ or $c$ is adjacent to $b$. Thus up to permuting $b$ and $c$, the non-cycle appears as a subgraph of $\Gamma$. Moreover, if $a$ has an out-edge to $b$, then by the pigeon-hole principle $b$ must have an out-edge to at least one of the in-edges of $a$. Thus, the 3-cycle also appears as a subgraph of $\Gamma,$ which completes our proof.  
\end{proof}

\subsection{Yang-Baxter planar algebras}

The following definitions are were given in \cite{Jon99} (See also \cite{Pet09}):

\begin{definition}
A shaded planar algebra is a collection of vector spaces $\mathcal{V} = \{V_{\pm i}\},$ $i \in \mathbb{Z}_{\geq 0}$ together with an action of the planar operad. 
\end{definition}

The planar operad is the collection of planar tangles, so-called ``spaghetti-and-meatball" diagrams, taken up to smooth isotopy. As an example, consider 
\begin{center}
    T~:=~\includegraphics[valign = c]{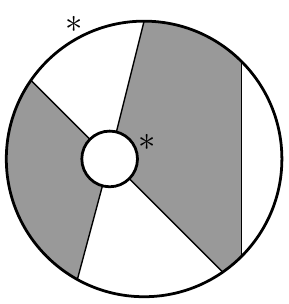}
\end{center}
In this case, this planar tangle will give a linear map  $T:V_{-2} \to V_{+3}.$ An action of the planar operad is an assignment of multilinear maps to planar tangles that respects the composition of tangles. 
\begin{definition}
A planar algebra, $\mathcal{V}$, is generated by a set of elements, $\mathcal{R},$ if for every element $v \in \mathcal{V},$ there exists a planar diagram, $T$, such that $T(r_1,\dots,r_k) = v$ for some $r_i \in \mathcal{R}.$ 
\end{definition}
\begin{definition}\label{def:std}
A planar diagram $T$ is drawn in \emph{standard form} if the input and output circles are drawn as rectangles, with strings attached only to the top or bottom such that the starred region represented by the left-hand side of the rectangle.
\end{definition}
In what follows, all planar diagrams are assumed to be in standard form unless explicitly drawn otherwise. In general, one can think of the planar operad as the set of operations on a planar algebra. Some of these operations appear often and have common names, shown below for two elements of a planar algebra $X$ and $Y$: 
\begin{center}
    \includegraphics[valign = c]{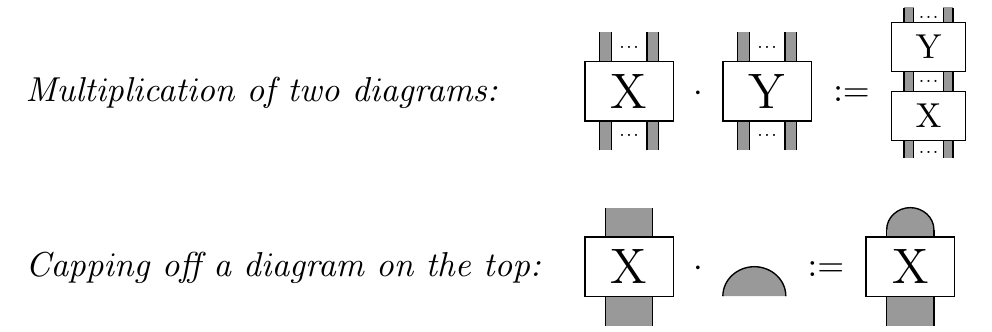}
\end{center}
\begin{definition}
A planar algebra is \emph{evaluable} if $\dim V_{+0} = \dim V_{-0} = 1.$ Let $\mathcal{V}$ be an evaluable planar algebra. We define an \emph{inner product} on $\mathcal{V}$ as
\begin{center}
\includegraphics[valign = c]{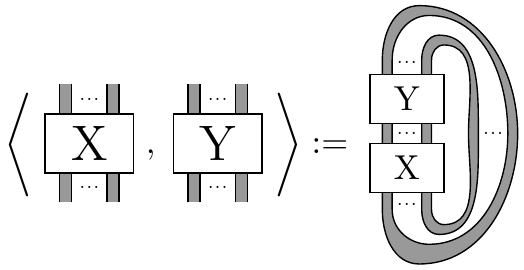}
\end{center}
and similarly for diagrams with the opposite shading. An element $v_n\in V_n$ that has the property that $<\!v_n,w_n\!> ~= 0$ for all $w_n\in V_n$ is called a \emph{negligible} element. We say that a planar algebra is \emph{non-degenerate} if it has no negligible elements.
\end{definition}

\begin{definition}
Let $\mathcal{V}$ and $\mathcal{W}$ be planar algebras. A planar algebra map, $\Phi:\mathcal{V} \to \mathcal{W},$ is a collection of linear maps $\phi_{\pm i}: V_{\pm i} \to W_{\pm i}$ intertwining the respective actions of the planar operad.
\end{definition}

In this paper we will classify certain maps between two special planar algebras. The injectivity of these maps will be important for many of the proofs that appear, but checking whether a planar algebra map has this property is easy in certain instances.

\begin{proposition}\label{inj}
Let $\mathcal{V}$ and $\mathcal{W}$ be planar algebras and $\Phi: \mathcal{V} \to \mathcal{W}$ be a planar algebra map. Suppose that $\mathcal{V}$ is evaluable, non-degenerate, and generated by a non-zero element $P$. Then $\Phi$ is injective if and only if $\Phi(P) \ne 0$. 
\end{proposition}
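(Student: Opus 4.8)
The forward implication is immediate and requires none of the hypotheses beyond $P \neq 0$: if $\Phi$ is injective, then since $\Phi(0) = 0$ and $P \neq 0$, we cannot have $\Phi(P) = 0$. So the entire content lies in the reverse implication, and the plan is to analyze the kernel of $\Phi$. Writing $K_{\pm i} := \ker \phi_{\pm i}$, the first step is to observe that the collection $K = \{K_{\pm i}\}$ is closed under the action of the planar operad, i.e.\ is a \emph{planar ideal}: for any planar tangle $T$ and inputs $x_1, \dots, x_k$ at least one of which lies in $K$, the intertwining property $\phi\bigl(T(x_1, \dots, x_k)\bigr) = T\bigl(\phi(x_1), \dots, \phi(x_k)\bigr)$ together with the multilinearity of the operad action forces $T(x_1, \dots, x_k) \in K$. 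Thus proving $\Phi$ injective is exactly proving $K = 0$.

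The second step uses the generation hypothesis to pin down precisely when $K$ is proper. Since $\mathcal{V}$ is generated by $P$, every element of $\mathcal{V}$ has the form $T(P, \dots, P)$ for some tangle $T$, so by intertwining $\phi\bigl(T(P, \dots, P)\bigr) = T\bigl(\phi(P), \dots, \phi(P)\bigr)$. Multilinearity then shows $\phi(P) = 0$ if and only if $\Phi$ is identically zero, that is, if and only if $K = \mathcal{V}$. Hence the standing hypothesis $\phi(P) \neq 0$ is equivalent to the statement that $K$ is a \emph{proper} planar ideal, and it remains only to show that the sole proper planar ideal of $\mathcal{V}$ is $0$. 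This is where non-degeneracy and evaluability enter: suppose toward a contradiction that $0 \neq v \in K_n$ for some $n$. By non-degeneracy there is $w \in V_n$ with $\langle v, w \rangle \neq 0$. But $\langle v, w \rangle$ is obtained by applying the inner-product (trace) tangle to $v$ and $w$, so it lies in $K_0 \subseteq V_0$; indeed $\phi_0(\langle v, w\rangle) = \langle \phi(v), \phi(w)\rangle = \langle 0, \phi(w)\rangle = 0$. Because $\mathcal{V}$ is evaluable, $V_0 \cong \mathbb{C}$, so $\langle v, w\rangle = c \cdot 1$ with $c \neq 0$, and therefore the unit $1 \in K$.

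The final step is to propagate the unit through the whole algebra: multiplying an arbitrary element $x$ by the scalar $1 \in V_0$ via the appropriate "multiply by a $V_0$-element" tangle returns $x$, so $1 \in K$ forces $x \in K$ for every $x$, giving $K = \mathcal{V}$ and contradicting properness; hence $K = 0$ and $\Phi$ is injective. I expect the one genuinely delicate point, and the main obstacle, to be this last propagation step across \emph{both} shadings: the inner product of $v$ only lands in the unit of a single shaded sector $V_{\pm 0}$, and to reach the generator $P$ (and the complementary shading) one must bridge the two sectors. The natural device is a tangle that nests a shaded disk inside the opposite shading, introducing a closed loop; this works provided the loop parameter $\delta$ is nonzero, which I would justify by noting that $\delta = 0$ would make a through-strand negligible, contradicting non-degeneracy. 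Everything else in the argument is formal manipulation of the operad action.
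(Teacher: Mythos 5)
Your overall strategy is the same circle of ideas as the paper's (much terser) proof: the paper simply observes that any element of $\ker\Phi$ pairs to zero against everything, i.e.\ is negligible, and hence vanishes by non-degeneracy; you have unwound this into the statement that $\ker\Phi$ is a planar ideal which, if nonzero, must contain the unit. Two remarks on where you diverge. First, your claimed equivalence ``$\Phi(P)=0$ if and only if $\Phi\equiv 0$'' is false: under the paper's definition of generation, tangles with \emph{no} input discs are allowed, so the Temperley--Lieb elements of $\mathcal{V}$ lie in the subalgebra generated by $P$ yet are not killed when $\Phi(P)=0$. This does not damage your proof, since you only ever use the trivial direction ($\Phi(P)\neq 0$ implies $P\notin\ker\Phi$, so the kernel is proper). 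Second, making the role of $\Phi(P)\neq 0$ explicit is a point in your favor; the paper's own one-line argument never visibly invokes it.

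The genuine gap is in your final step, exactly where you predicted delicacy. To contradict properness you try to upgrade $1\in K$ to $K=\mathcal{V}$ \emph{including the opposite-shaded $0$-box space}, and for that you need a nonzero loop parameter $\delta$; your justification that $\delta\neq 0$ fails on two counts. (a) The claim ``$\delta=0$ makes a through-strand negligible'' is unsupported: the pairing of the strand with an arbitrary $y\in V_{\pm 1}$ is a closure of $y$, which has no reason to vanish for all $y$ merely because it vanishes when $y$ is the strand itself. (b) Even granting negligibility, non-degeneracy would only force the strand to equal $0$; that is not by itself a contradiction. The repair is to aim lower: you never need $K=\mathcal{V}$, only $P\in K$. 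Once the unit $1_{\epsilon}$ of some shading $\epsilon$ (the image of the empty tangle of that shading) lies in $K$, insert it into a region of shading $\epsilon$ of the identity tangle on the box space containing $P$; since $P$ is a $2$-box in this paper, its box space has regions of both shadings, so this tangle has no closed loops and returns $P$ exactly. Hence $P\in K$, contradicting $\Phi(P)\neq 0$, and no loop parameter ever appears. With that substitution your argument is complete and agrees in substance with the paper's proof.
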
%

\begin{proof}
Let $\mathcal{V}$ and $\mathcal{W}$ be as above. Since $\mathcal{V}$ is generated by $P$, any map $\Phi$ is completely described by its value, $\Phi(P)$.  If $\Phi$ is an injective map and $P$ is a non-zero element of $\mathcal{V}$, $\Phi(P)\ne 0$ by definition. Suppose that $\Phi(P) \ne 0$. Since $\mathcal{V}$ is non-degenerate and non-zero, any element of the kernel of $\Phi$ must be negligible. Thus, $\Phi$ is injective, as desired.  
\end{proof}

Examples of evaluable planar algebras are the Temperley-Lieb-Jones (TLJ) planar algebra \cite{Jon85} or the Kauffman polynomial planar algebra \cite{Kau90}. The Kauffman polynomial planar algebra is given below as an example, which is traditionally described as an \emph{unshaded} planar algebra. We can think of it as a shaded planar algebra by assuming the relations are the same in both possible shadings.

\begin{example}[The Kauffman polynomial planar algebra]\label{dubkau}
The Kauffman polynomial planar algebra is generated by  and subject to the following relations:
\begin{center}
\begin{tabular}{ccc}
\multicolumn{3}{c}{$ ~+~  = z \left( ~  +  ~ \right)$}
\\
\\
$\skeincircle = d$	&& $ = a\cdot$ 
\\
\\
$ ~=~ $ && $\begin{tikzpicture}[scale = .5, baseline]
\draw (1,1) -- (-1,-1);
\draw (1,-1) -- (.2,-.2);
\draw (-.2,.2) -- (-1,1);
\draw (0,1) to[out = -45, in = 135] (.4,.65);
\draw (.6,.45) to[out = -45, in = 90] (.8,0)
               to[out = -90, in = 45] (.6,-.45);
\draw (.4,-.65) to[out = 45, in = -135] (0,-1);       
\end{tikzpicture}
~=~
\begin{tikzpicture}[scale = .5, baseline]
\draw (1,1) -- (-1,-1);
\draw (1,-1) -- (.2,-.2);
\draw (-.2,.2) -- (-1,1);
\draw (0,1) to[out = 45, in = -135] (-.4,.65);
\draw (-.6,.45) to[out = -135, in = 90] (-.8,0)
                to[out = -90, in = 135] (-.6,-.45);
\draw (-.4,-.65) to[out = -45, in = 135] (0,-1);   
\end{tikzpicture}$
\end{tabular}
\end{center}
where $z(1+d) = a + a^{-1}.$ The last two relations are called Reidemeister Type II and III relations, respectively. Although we can think of this as a shaded planar algebra by imposing the above relations on both shadings, we instead impose the following lopsided relations for convenience (See \cite{MP14} for more on this.). These relations are the same as the usual definition after renormalizing. In addition to both shaded versions of the Reidemeister Type II and III relations, we have the following relations:
\begin{center}
    \includegraphics[valign = c, scale = .95]{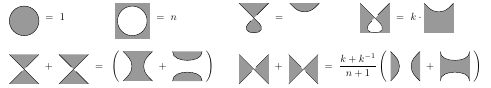}
\end{center}
\end{example}

By abstracting the notion of the Reidemeister relations, we can define a new class of planar algebras, called Yang-Baxter planar algebras, whose definition first appeared \cite{Liu15}. Examples of such planar algebras can also be found in \cite{BJ00}, \cite{BJ03}, and \cite{BJL17}. Unlike in those papers, however, we again use lopsided circle parameters for convenience.

\begin{definition}\label{def:YB}
A Yang-Baxter planar algebra is a non-degenerate, evaluable planar algebra with 
\begin{center}
\begin{tabular}{cccc}
$ ~=~ 1$ &&& $ ~=~ n$
\end{tabular}
\end{center}
which is generated by a subset of $\mathcal{R}\subset V_{2+}$ satisfying the following relations:
\begin{center}
\begin{tabular}{ccc}
\includegraphics[valign = c, scale = 1]{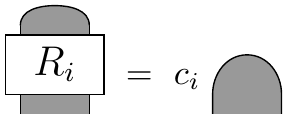} &&
\includegraphics[valign = c, scale = 1]{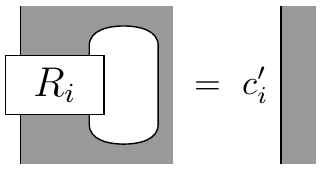} 
\\ Relation 1a && Relation 1b
\\
\includegraphics[valign = c, scale = 1]{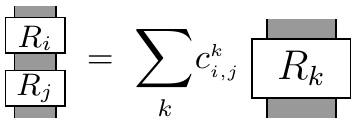}&&
\includegraphics[valign = c, scale = 1]{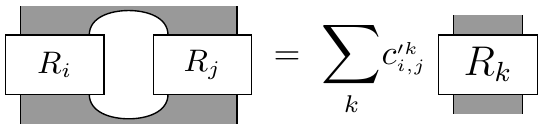}
\\ Relation 2a && Relation 2b
\\
$\displaystyle
    \includegraphics[valign = c, scale = .7]{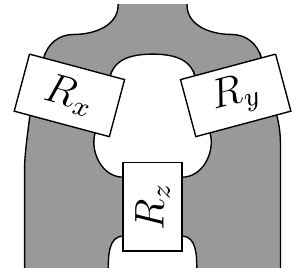} ~=~ \mathlarger{\mathlarger{\sum}}_{i,j,k}~c^{i,j,k}_{x,y,z} 
    \includegraphics[valign = c, scale = .7]{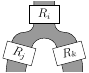}$
    &&
$\displaystyle
       \includegraphics[valign = c, scale = .7]{diagrams/pdf/shadedmiddle.pdf} ~=~ \mathlarger{\mathlarger{\sum}}_{x,y,z}~c^{x,y,z}_{i,j,k} \includegraphics[valign = c, scale = .7]{diagrams/pdf/unshadedmiddle.pdf}$
\\ Relation 3a && Relation 3b
\end{tabular}
\end{center}
for all $R_i,R_j,R_k\in\mathcal{R}\cup \left\{, \right\}.$
\end{definition}

One can think of Relation 1a and 1b as an abstraction of the twist relations while Relations 2a and 2b and Relations 3a and 3b are generalizations of the Reidemeister II and III relations, respectively. 
\begin{definition}
A Yang-Baxter planar algebra is singly-generated if $\mathcal{R} = \left\{ \xdiagscaled[R] \right\}$ with $ \xdiagscaled[R] \ne 0 $ and $\left\{\xdiagscaled[R], , \right\}$ spans the 2-box space. 
\end{definition}
The original definition of singly-generated in \cite{Liu15} requires that $\mathcal{R}\cup \left\{, \right\}$ be a basis for the 2-box space. This requirement is softened so that we can include the case where  is a linear combination of the identity and cupcap. Suppose we have a singly-generated YBPA with generator $.$ The fact that this diagram spans along with the identity and the cupcap implies the following additional relation, which we call ``Relation 0":
\begin{equation*}
\includegraphics[valign = c, scale = 1]{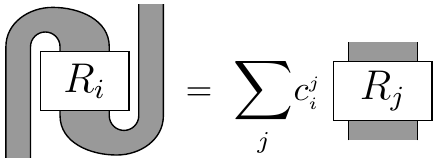}
\end{equation*}
\begin{lemma}\label{twodiag}
Let $\mathcal{V}$ be a singly-generated, evaluable, non-degenerate planar algebra. Let $X$ be the set of all diagrams in $\mathcal{V}$ of the form
\begin{center}
   \includegraphics[valign = c, scale = .7]{diagrams/pdf/unshadedmiddle.pdf} 
\end{center}
and $Y$ be the set of all diagrams of the form
\begin{center}
   \includegraphics[valign = c, scale = .7]{diagrams/pdf/shadedmiddle.pdf} 
\end{center}
where $R_i\in\{~,~,~~\}.$ Then up to a scalar
\[(X\cap Y)' = \left\{ ~\includegraphics[valign = c, scale = .7]{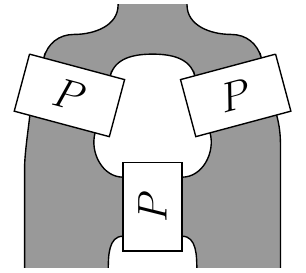} ~,~\includegraphics[valign = c, scale = .7]{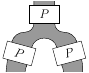}  \right\}\]
That is, every diagram but the above two appear in both sets. 
\end{lemma}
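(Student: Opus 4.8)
The plan is to regard both $X$ and $Y$ as indexed by triples $(R_i,R_j,R_k)\in\{\cupcapscaleds,\identityscaleds,\pdiagscaled\}^3$ and to establish the claimed equality by proving both inclusions. The real content of the phrase ``every diagram but the above two appear in both sets'' is that the unshaded-middle diagram attached to a triple $T$ agrees, up to a scalar, with a shaded-middle diagram (and conversely) exactly when $T$ is \emph{not} the triple with all three boxes equal to $\pdiagscaled$. Thus the bulk of the work is a diagrammatic case analysis over these triples, and the two exceptional diagrams are precisely the all-$P$ unshaded-middle and all-$P$ shaded-middle pictures.

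For the inclusion asserting that every non-all-$P$ diagram lies in $X\cap Y$, I would exploit that the unshaded- and shaded-middle diagrams are the two sides of the Yang--Baxter configuration of three $2$-boxes in braid position, differing only in the shading of the central region and the regions it forces. The key point is that $\identityscaleds$ and $\cupcapscaleds$ are shading-transparent: inserting $\identityscaleds$ deletes a crossing, while inserting $\cupcapscaleds$ introduces a turnback, and in either case the central region is no longer enclosed but opens onto an outer region whose shading is fixed by the boundary. Hence, as soon as one of the three boxes is $\identityscaleds$ or $\cupcapscaleds$, the unshaded- and shaded-middle pictures become planar-isotopic after the substitution, up to a scalar from any closed loop produced (each loop evaluating to $1$ or $n$ by the circle values $\shadedcirclea=1$ and $\shadedcircleb=n$). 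Using the symmetries of the braid configuration, it suffices to check this with a single distinguished position occupied by $\identityscaleds$ or by $\cupcapscaleds$ and the remaining two boxes arbitrary, which collapses all of the non-all-$P$ triples to a handful of representative pictures, each verified by isotopy.

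For the reverse inclusion, that the two all-$P$ diagrams really are excluded, I would invoke Relations 3a and 3b together with non-degeneracy. When all three boxes equal $\pdiagscaled$ no degeneration is available, so no isotopy identifies the unshaded and shaded pictures; instead Relation 3a expresses the all-$P$ unshaded-middle diagram as a nontrivial linear combination of shaded-middle diagrams. Because $\mathcal{V}$ is non-degenerate and $\pdiagscaled$ spans the $2$-box space together with $\identityscaleds$ and $\cupcapscaleds$, the shaded-middle diagrams occurring are linearly independent, so the all-$P$ unshaded diagram cannot equal any single shaded-middle diagram up to a scalar; it therefore lies in $X$ but not in $Y$, and symmetrically, via Relation 3b, the all-$P$ shaded diagram lies in $Y$ but not in $X$.

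The main obstacle I expect is the case analysis underlying the first inclusion: one must draw each degenerate configuration and confirm that, once a crossing is deleted or a turnback is inserted, the central shading is genuinely forced by the boundary, all while correctly tracking the loop scalars $1$ and $n$. The symmetry reduction and the shading-transparency of $\identityscaleds$ and $\cupcapscaleds$ are exactly what prevent this from degenerating into an unwieldy enumeration.
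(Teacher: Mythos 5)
Your overall strategy for the main inclusion --- treating $X$ and $Y$ as indexed by triples and checking, case by case, that every degenerate configuration simplifies to a common diagram up to isotopy and circle scalars --- is the same strategy as the paper's proof, which simply draws all simplified diagrams in the two sets (fifteen each) and compares them. However, your argument for excluding the two all-$P$ diagrams has a genuine gap: it is circular. The lemma assumes only that $\mathcal{V}$ is singly-generated, evaluable, and non-degenerate; it does \emph{not} assume that $\mathcal{V}$ is a Yang--Baxter planar algebra, so Relations 3a and 3b are not available to you. Indeed, the entire purpose of this lemma is to feed Proposition \ref{prop:onlyp}, which shows that the defining YB relations need only be checked when $R_i=R_j=R_k=P$ precisely because the degenerate instances hold automatically; invoking Relation 3a inside the proof of the lemma assumes what that proposition is trying to establish. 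Your linear-independence claim is also unjustified: non-degeneracy of $\mathcal{V}$ together with the fact that $\left\{\pdiagscaled,\identityscaleds,\cupcapscaleds\right\}$ spans the 2-box space says nothing about linear independence of 3-box diagrams. None of this machinery is needed: the lemma is a statement about formal diagrams up to isotopy and closed-loop scalars, and at that level the two all-$P$ pictures are distinct from every diagram in the other set by inspection, since they are the only diagrams in either list containing three boxes.

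Second, your ``shading-transparency'' principle is not correct as stated, that is, triple by triple. The regions surrounding a box position in the unshaded-middle picture are shaded oppositely to those surrounding a box position in the shaded-middle picture. Consequently, inserting $\identityscaleds$ into the unshaded-middle diagram does not produce a pass-through: because its strands must have unshaded regions on their outer sides, it produces the cup--cap type degeneration (a cap on two adjacent boundary points together with a doubled strand joining the two remaining boxes), while inserting $\identityscaleds$ into the shaded-middle diagram produces the pass-through; the roles of $\identityscaleds$ and $\cupcapscaleds$ are exchanged between the two sides, and the positions at which matching degenerations occur are permuted as well. So the matching between $X$ and $Y$ is not ``substitute the same element in the same slot and isotope''; one must compare the two full lists of simplified diagrams, tracking which element in which slot of one configuration reproduces a given diagram of the other. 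This is exactly what the paper's enumeration does, and any symmetry reduction must build in this exchange, or the case analysis will appear to produce diagrams of $X$ with no counterpart in $Y$.
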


\begin{proof}
Let $X$ and $Y$ be as described above. Then after simplification we have that $X$ is composed of the following 15 diagrams:
\begin{center}
\includegraphics[scale = .6]{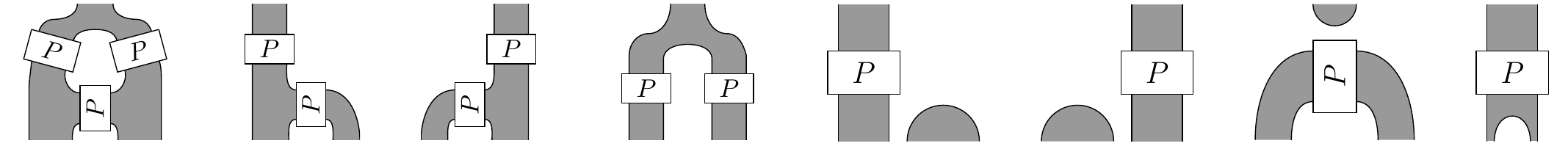}

\vspace{.15in}

\includegraphics[scale = .6]{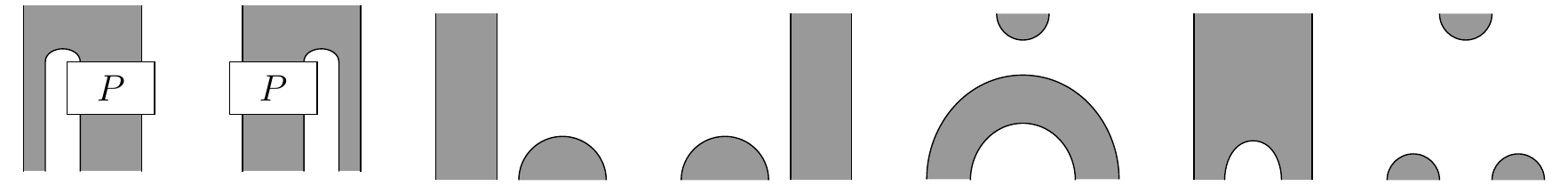}  
\end{center}
and $Y$ is the set of the following 15 diagrams after simplification:
\begin{center}
\includegraphics[scale = .6]{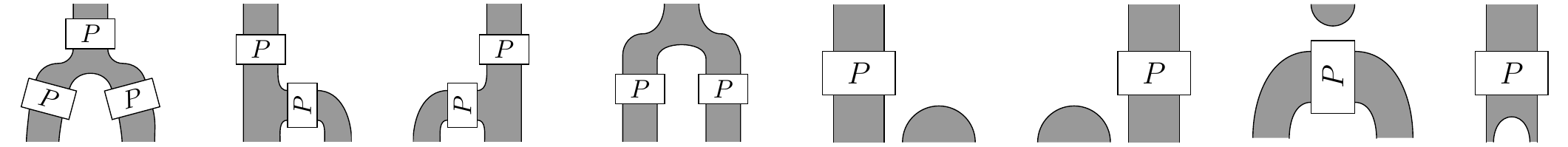}

\vspace{.15in}

\includegraphics[scale = .6]{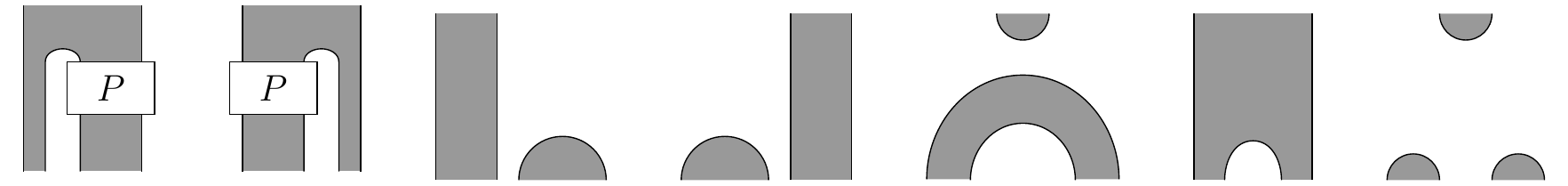}  
\end{center}
Thus, \[(X\cap Y)' = \left\{ ~\includegraphics[valign = c, scale = .6]{diagrams/pdf/unshadedmiddlep.pdf} ~,~\includegraphics[valign = c, scale = .6]{diagrams/pdf/shadedmiddlep.pdf}  \right\}\]

\noindent as desired. 
\end{proof}

\begin{proposition}\label{prop:onlyp}
Let $\mathcal{V}$ be an evaluable, non-degenerate planar algebra generated by $$ with
\begin{center}
\begin{tabular}{cccc}
$ ~=~ 1$ &&& $ ~=~ n$
\end{tabular}
\end{center}
Then $\mathcal{V}$ is a Yang-Baxter planar algebra if and only if the defining relations exist when $R_i = R_j = R_k = P.$ 
\end{proposition}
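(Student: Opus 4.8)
The forward direction is immediate from the definition of a Yang-Baxter planar algebra: if $\mathcal{V}$ is such an algebra with generating set $\mathcal{R} = \{\pdiagscaled\}$, then Relations 1a--3b hold for all choices of $R_i, R_j, R_k \in \{\pdiagscaled, \identityscaleds, \cupcapscaleds\}$, and in particular for the single choice $R_i = R_j = R_k = P$. The content of the proposition is therefore the converse: assuming each defining relation holds when every input is $P$, I want to deduce that it holds for every assignment of $\{\pdiagscaled, \identityscaleds, \cupcapscaleds\}$ to the available slots. Since $\mathcal{V}$ is already assumed evaluable, non-degenerate, generated by $P$, and equipped with the prescribed circle values $\shadedcirclea = 1$ and $\shadedcircleb = n$, establishing all of Relations 1a--3b is exactly what remains to certify that $\mathcal{V}$ is a Yang-Baxter planar algebra.

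The plan for the converse is to reduce every mixed assignment to the all-$P$ case, and the main tool is Lemma \ref{twodiag}. Relations 3a and 3b rewrite each ``unshaded middle'' diagram as a combination of ``shaded middle'' diagrams and conversely, as the three inputs range over the sets $X$ and $Y$. By Lemma \ref{twodiag}, every diagram in $X$ already lies in $Y$ and vice versa, with the sole exceptions of the two diagrams exhibited there, both of which are built from $P$. For any input triple whose associated diagram lies in $X \cap Y$, the corresponding instance of Relation 3a (resp.\ 3b) is the tautology expressing a diagram as itself, and so holds automatically. Thus the only genuinely nontrivial instances of Relations 3a and 3b are the conversions between the two exceptional $P$-diagrams, and these are exactly the relations posited in the hypothesis for $R_i = R_j = R_k = P$.

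It remains to treat Relations 1a, 1b, 2a, and 2b, which involve at most two of the inputs, and I would argue by reduction on the number of slots occupied by $P$. Whenever a slot carries the identity $\identityscaleds$, the associated strand passes straight through and the relation collapses by planar isotopy to a relation with strictly fewer generators; whenever a slot carries the cupcap $\cupcapscaleds$, capping off the diagram and applying the circle values together with evaluability reduces the relation to one with fewer generators or to a closed-loop evaluation. In either case the reduced relation is either a tautology or an all-$P$ instance supplied by the hypothesis, so all of Relations 1a--3b follow and $\mathcal{V}$ is a Yang-Baxter planar algebra. The delicate step is the cupcap reduction: because the cup and cap reconnect neighboring strands, one must verify that the loops so created evaluate correctly against the circle parameters and that no spurious term survives, and it is here that non-degeneracy and the precise normalization of $\shadedcirclea = 1$, $\shadedcircleb = n$ are used.
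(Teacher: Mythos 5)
Your proposal is correct and follows essentially the same route as the paper: the forward direction is definitional, mixed instances of Relations 1a--2b are reduced via evaluability and the circle parameters (with cupcap slots handled by the already-established capping relations), and mixed instances of Relations 3a--3b are tautological by Lemma \ref{twodiag}, leaving only the all-$P$ case supplied by the hypothesis. The only minor quibble is your closing remark that non-degeneracy is needed for the cupcap reduction; in the paper it plays no role in that step and is simply part of the hypotheses required for $\mathcal{V}$ to qualify as a Yang-Baxter planar algebra.
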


\begin{proof}
Clearly, if $\mathcal{V}$ is a YBPA then the defining relations exist when $R_i = R_j = R_k = P$ by definition. Suppose now that the defining relations exist when $R_i = R_j = R_k = P$. For any evaluable planar algebra, note that when $R_i \in \left\{, \right\}$ Relations 1a and 1b exist and are completely determined by the circle parameter, $n$, since $\mathcal{V}$ is evaluable. Thus, Relations 1a and 1b exist for all $R_i \in \left\{, , \right\}$ for any $\mathcal{V}$ as defined above.  

For a singly-generated YBPA, Relations 2a and 2b are determined by the circle parameter and Relations 1a and 1b when at least one of $R_i$ or $R_j \in \left\{, \right\}.$ Thus, by assumption there exist relations 2a and 2b for all $R_i$ and $R_j$. By Lemma \ref{twodiag}, when at least one of $R_i$, $R_j$ or $R_k\in\left\{, \right\}$ Relations 3a and 3b are the identity relation. Thus, there exist relations 3a and 3b for all $R_i$, $R_j,$ and $R_k.$ Thus, $\mathcal{V}$ is a singly-generated YBPA by definition, which completes our proof. 
\end{proof}

In \cite{Liu15}, Liu completed the classification begun by Bisch and Jones in \cite{BJ00}, \cite{BJ03}, and \cite{BJL17}. This deep result is stated below:

\begin{theorem}[Theorem 1.3 in \cite{Liu15}]\label{Liu}
Let $\mathcal{V}$ be a singly-generated Yang-Baxter planar algebra. Then $\mathcal{V}$ is isomorphic to one of the following:
\begin{enumerate}[label = \roman{enumi}.]
    \item a Tempereley-Lieb-Jones (TLJ) planar algebra, 
    \item a Bisch-Jones planar algebra,
    \item a Kauffman polynomial planar algebra, or
    \item a planar algebra in the Liu family.
\end{enumerate}
\end{theorem}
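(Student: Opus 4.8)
The plan is to reduce the classification to a finite system of polynomial equations in finitely many structure constants and then analyze its solution set. By Proposition \ref{prop:onlyp}, a singly-generated YBPA is a Yang-Baxter planar algebra as soon as the defining relations hold with $R_i = R_j = R_k = P$, so the entire algebra is encoded by the circle parameter $n$ together with the scalars appearing in Relations 0, 1, 2, and the structure constants $c^{i,j,k}_{x,y,z}$ of Relation 3. First I would normalize the generator: since $\{\identityscaleds, \cupcapscaleds, \pdiagscaled\}$ spans the $3$-dimensional $2$-box space and $\mathcal{V}$ is evaluable and non-degenerate, the $2$-box space is a semisimple algebra under stacking, and one may replace $P$ by a minimal idempotent (the ``generating minimal projection''). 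This pins down Relation 0 and expresses the bubble and trace evaluations of $P$ in terms of $n$, leaving only a bounded number of free scalars.

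Second, I would extract the constraints those scalars must satisfy. By Lemma \ref{twodiag}, the Yang-Baxter relations collapse the two naively $15$-element spanning sets $X$ and $Y$ so that they agree on all but two distinguished diagrams; the induced identifications, combined with the requirement that resolving a triple of generators via Relation 3a and Relation 3b be independent of the order in which it is carried out, impose precisely the Yang-Baxter equation on the $c^{i,j,k}_{x,y,z}$. Folding in the twist normalization (Relation 1) and the $R2$-type relation (Relation 2) yields a finite, if intricate, system of polynomial equations in the structure constants and $n$. Along the way one computes $\dim V_3$ (and, where needed, $\dim V_4$), which is the key discrete invariant separating the families --- recall, for instance, that the distinguished Bisch-Jones algebra appearing later has $\dim V_3 = 9$.

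Third, I would solve the system by a case analysis driven by the eigenvalues with which $P$ acts inside the $V_3$-algebra, matching each branch to a known skein theory: the branch where $P$ degenerates into a Temperley--Lieb combination gives a TLJ planar algebra, the branch with a group-like/free-product splitting gives a Bisch-Jones planar algebra, the branch obeying the Kauffman (BMW) skein gives a Kauffman polynomial planar algebra, and the residual discrete solutions constitute the Liu family; consistency of each surviving parameter choice is confirmed by exhibiting the corresponding model. The hard part is \emph{completeness}: showing the polynomial system admits no solutions outside these four families. This forces one to rule out degenerate branches where $\dim V_3$ drops below its generic value --- exactly where non-degeneracy must be invoked to discard parameter values that would otherwise produce negligible, hence inconsistent, elements --- and to check that the parameters determine the isomorphism type rather than merely constrain it. Because this polynomial analysis is genuinely delicate, in practice I would defer its full execution to \cite{Liu15}; the outline above is the conceptual skeleton that that argument fills in.
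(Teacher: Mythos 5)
The paper never proves this statement: it is Theorem~1.3 of \cite{Liu15}, imported wholesale as an external result (the text introduces it with ``This deep result is stated below''), so there is no internal argument to compare yours against. Your proposal, by its own closing admission, also defers the decisive step --- completeness of the case analysis --- to \cite{Liu15}, so in the end you and the paper land in the same place: citing Liu. That is the honest move here; the problem is that your sketch presents itself as a proof outline while the parts it leaves out are exactly the theorem.

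Two concrete gaps in the sketch itself. First, the opening reduction --- ``the entire algebra is encoded by $n$ together with the scalars in Relations 0--3'' --- is not automatic. Even granting Proposition~\ref{prop:onlyp}, a choice of structure constants satisfying some polynomial identities does not hand you a planar algebra, much less a unique one: you must show the skein relations evaluate every closed diagram consistently (every diagram reduces, and the answer is independent of the order of reductions) and that there is a unique non-degenerate quotient realizing them. Establishing this consistency, and constructing the planar algebras on each branch, is a large portion of Liu's actual argument, not bookkeeping. Second, you misuse Lemma~\ref{twodiag}: in the paper it serves only to show that Relations 3a/3b degenerate to the identity relation when one of the $R_i$ is $\identityscaleds$ or $\cupcapscaleds$; it does not ``impose precisely the Yang-Baxter equation'' on the $c^{i,j,k}_{x,y,z}$. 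Likewise, the assertion that the eigenvalue analysis of $P$ on the $3$-box space sorts all solutions into the four families, with nothing left over, is a restatement of the conclusion rather than an argument for it. If you want to use this theorem in the paper, state it as a citation, as the author does; if you want to prove it, the work lives precisely in the steps your outline waves at.
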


The term ``Bisch-Jones planar algebra" refers to the planar algebras in \cite{BJ00} and \cite{BJ03}. They include a unique planar algebra with $\dim V_3 = 9$ and $10$, a one-parameter family of planar algebras with $\dim V_3 = 11$ and a two-parameter family of planar algebras with $\dim V_3 = 12.$ Each of these planar algebras will appear in our classification. The latter family in the above list, however, does not appear in our classification so its definition is omitted.

\subsection{Graph planar algebra of $\ast_n$.}
Another class of planar algebras is the planar algebra of a bipartite graph originally defined in \cite{Jon00}. In this paper, we will only make use of the planar algebra of a special bipartite graph, $\ast_n,$ described in Definition \ref{def:bipartite}, which is called the spin planar algebra in \cite{Jon99}. The following definitions were given in \cite{Jon99}:

\begin{definition}
Consider $\ast_n$. Let $\star$ be its odd vertex and $S$ be the set of $n$ even vertices. Define $\text{GPA}(\ast_n)_{\pm i}$ to be the set of linear functionals of based loops of length $2i$. Functionals in $V_{-i}$ have base point $\star$, while functionals in $V_{+i}$ have base point in $S.$    
\end{definition}
 
Because our graph is $\ast_n$, note that $\text{GPA}(\ast_n)_{\pm i} \cong \mathbb{C}[S^i],$ the vector space of formal $\mathbb{C}$-linear combinations of $i$-tuples of $S$. We would like for the collection of these vector spaces, $\text{GPA}(\ast_n),$ to be a planar algebra. To that end, we define the action of the planar operad. 
\begin{definition}
A state $\sigma$ on a planar diagram $T:V_{i_1}\otimes\dots\otimes V_{i_k}\to V_{o}$ is an assignment of elements of $S$ to the unshaded regions and $\star$ to shaded regions. Define $\sigma|_i$ to be the based loop of $\ast_n$ obtained by reading the inputs clockwise around the $i$-th input disc, starting with the starred region. Let $\sigma|_o$ be the based loop obtained by performing the same operation around the output disc. For linear functionals $f_1,\dots,f_k$, define the action of $\text{GPA}(\ast_n),$ on $T$ to be the functional
\begin{equation*}\label{eq:statesum}
T(f_1,f_2,\dots,f_k)(l) = 
\sum_{\substack{\sigma \text{ with}\\
   \sigma|_o = l}}
~ \prod_{i=1}^k f_i(\sigma|_i)
\end{equation*}
\end{definition}

It is important to note that the general definition of a state is different than the one given here, as we are making use of the vector space isomorphism mentioned above. In addition, we have a omitted the normalization factor found in the standard definition of \cite{Jon99} and \cite{Jon00} which has the effect that 
\begin{center}
\begin{tabular}{cccc}
$ ~=~ 1$ &&& $ ~=~ n$
\end{tabular}
\end{center}
To better understand the action of the planar operad, we give an example using basis elements of $\text{GPA}(\ast_n),$ Kronecker deltas of loops, which we denote $\delta_l.$ As a shorthand, we will write $l$ as a concatenation of the vertices omitting $\star.$ Consider $\ast_5$ and say $S = \{a,b,c,d,e\}$. Then consider the planar tangle below:
\begin{center}
\includegraphics[valign = c]{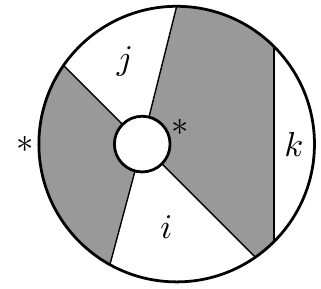}
\end{center}
where $i,j,k$ are elements of $S$ (The shaded regions are labelled with $\star$ but are omitted to avoid confusion with the starred regions.). Consider the loop $ab$. Then by definition of the action of the planar operad 
\begin{equation*}
    T(\delta_{ab})(\delta_{ijk}) = \left\{
    \begin{array}{cc}
        1 & i=a \text{ and } j=b  \\
        0 & \text{else} 
    \end{array}\right.
    ~ = ~\sum_{k\in S} \delta_{abk}
\end{equation*}
One can extend this result linearly to get a linear functional for any choice of input. By considering the action of  $\mathcal{V}$ on planar tangles with no input discs, we see that
\begin{center}
\begin{tabular}{ccccc}
     $ ~=~ \left\{
    \begin{array}{cc}
        1 & a = b  \\
        0 & a \ne b 
    \end{array}
    \right.  $ && and &&
     = 1
\end{tabular}
\end{center}
for all $a,b\in S.$ Thus, we know that 
\begin{equation}
   ~=~ \sum_{i=1}^n \delta_{ii}
\end{equation}
and
\begin{equation}
   ~=~ \sum_{i,j=1}^n \delta_{ij}
\end{equation}
\label{YB}

\subsection{Spin Models}

Given the connection that knot polynomials have to spin models, it is logical that we can define a similar concept for planar algebras. The following definition is based on \cite{Jon99}:
\begin{definition}
Let $\mathcal{V}$ be a planar algebra. A spin model for $\mathcal{V}$ is a map of planar algebras $\Phi:\mathcal{V}\to\text{GPA}(\ast_n)$ for some $n$.
\end{definition}
Suppose $\mathcal{V}$ is generated by elements of $V_{2+}$. Recall that a basis for $\text{GPA}(\ast_n)_{2k}$ is the collection of Kronecker deltas of loops,  $\delta_{l}$, where $l$ is any $k$-tuple of elements of $S$. If $X$ is an element of $V_{+2}$ then, in general, $\Phi(X) =\displaystyle \sum_{a,b\in S} c_X(a,b)\cdot\delta_{ab},$ where $c_X(a,b) \in\mathbb{C}.$ Let $C_X$ be the $n\times n$ matrix where the $(a,b)$ entry of $C_X$ is $c_X(a,b).$ By the action of the planar operad on $\text{GPA}(\ast_n),$ we know that the matrix corresponding to  is the identity matrix; similarly, the matrix corresponding to  is the matrix of all ones.  

To capture the fact that $\Phi$ is a map of planar algebras, we will write 
\begin{equation*}
c_X(a,b) ~:=~ \xdiag[X]  
\end{equation*}
for all $a,b\in S$ and generators $X$. More generally we can give the following definition:

\begin{definition}
Suppose $\Phi$ gives a spin model for some shaded planar algebra, $\mathcal{V}$, that is generated by 2-boxes and let $X\in V_{\pm k}$. Then a state sum with respect to $(a_1,\dots,a_k)$ is the sum over all possible labelings of the unshaded regions of $X$ such that the $i$th exterior region of $X$ (starting with the starred region and going clockwise) is $a_i.$  
\end{definition}
As an example, the state sum of 
\begin{center}
    \includegraphics[valign = c]{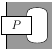}
\end{center}
with respect to a is
\begin{equation*}
\includegraphics[valign = c]{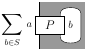}  
\end{equation*}
Just like for $c_X(a,b)$ for some generator $X$, one should think of the state sum of any diagram $Y$ with respect to $(a_1,\dots,a_i)$ as the coefficient of $\delta_{a_1\cdots a_i}$ of $\Phi(Y).$  Suppose $\mathcal{V}$ is a Yang-Baxter planar algebra with a spin model. Then any relation of $\mathcal{V}$ gives corresponding relations to the underlying state sums for any assignment of atoms to the exterior regions. Importantly, the converse is also true when $\mathcal{V}$ is a singly-generated YBPA.

\begin{proposition}\label{prop:atoms}
Let $X$ and $Y$ be elements of $V_{\pm k}$ of a singly-generated YBPA, $\mathcal{V}$, and let $\Phi$ be a spin model of $\mathcal{V}$. Let $P$ be its generator and suppose that $\Phi(P) \ne 0$. Then the state sum of $X$ and $Y$ with respect to $(a_1,\dots,a_k)$ are equal for all possible $a_i \in S$ if and only if $X = Y$. 
\end{proposition}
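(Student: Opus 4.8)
The plan is to recognize that the entire statement is a repackaging of the injectivity of $\Phi$, which is already available through Proposition \ref{inj}. First I would unwind the definition of the state sum. For an element $Z \in V_{\pm k}$, the state sum of $Z$ with respect to a tuple $(a_1,\dots,a_k)$ is, by the state-sum formula defining the action of the planar operad on $\text{GPA}(\ast_n)$, precisely the coefficient of the basis loop $\delta_{a_1\cdots a_k}$ in $\Phi(Z)$; this is exactly the interpretation recorded just before the proposition. Since the Kronecker deltas $\{\delta_l\}$, indexed by $k$-tuples $l$ of elements of $S$, form a basis for $\text{GPA}(\ast_n)_{\pm k} \cong \mathbb{C}[S^k]$, two functionals agree if and only if all of their $\delta_l$-coefficients agree. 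Hence the hypothesis ``the state sums of $X$ and $Y$ with respect to $(a_1,\dots,a_k)$ agree for every tuple'' is equivalent to the single equation $\Phi(X) = \Phi(Y)$.

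Next I would use linearity of $\Phi$ to rewrite $\Phi(X) = \Phi(Y)$ as $\Phi(X - Y) = 0$, i.e.\ $X - Y \in \ker \Phi$, so the proposition reduces to the assertion that $\Phi$ is injective. The backward direction ($X = Y$ implies equal state sums) is then immediate. For the forward direction I would invoke Proposition \ref{inj}: a singly-generated Yang-Baxter planar algebra is, by Definition \ref{def:YB}, non-degenerate and evaluable, and it is generated by its nonzero $2$-box $P$, while the hypothesis $\Phi(P) \ne 0$ is exactly the condition appearing in that proposition. Proposition \ref{inj} then yields that $\Phi$ is injective, so $\Phi(X - Y) = 0$ forces $X - Y = 0$, that is, $X = Y$.

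The step requiring the most care is the first one: one must be confident that the combinatorially defined state sum --- a sum over internal labelings compatible with the prescribed boundary tuple --- really does compute the $\delta_{a_1\cdots a_k}$-coefficient of $\Phi(Z)$ for a \emph{general}, possibly non-diagrammatic, element $Z$ of $V_{\pm k}$. This follows from the linearity of $\Phi$ together with the explicit state-sum formula for the planar-operad action, and it is the only place where the concrete structure of $\text{GPA}(\ast_n)$ is used. Once it is in hand, the rest is a basis argument plus a citation of the already-established injectivity criterion of Proposition \ref{inj}.
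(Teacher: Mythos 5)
Your proposal is correct and follows essentially the same route as the paper's own proof: identify the state sums with the $\delta_l$-coefficients of $\Phi(X)$ and $\Phi(Y)$, reduce the claim to $\Phi(X-Y)=0$, and then conclude $X=Y$ from the injectivity criterion of Proposition~\ref{inj} using $\Phi(P)\ne 0$. Your writeup is in fact slightly more careful than the paper's, since you make explicit the basis argument in $\text{GPA}(\ast_n)_{\pm k}\cong\mathbb{C}[S^k]$ that the paper compresses into ``by definition of a spin model.''
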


\begin{proof}
Clearly, if $X = Y$ then the state sum of $X$ and $Y$ with respect to $(a_1,\dots,a_k)$ are equal for all $a_i$. Suppose now that the state sum of $X$ and $Y$ with respect to $(a_1,\dots,a_k)$ are equal for all $a_i$. Then the state sum of $X - Y$ is 0 for all possible assignment of atoms to the exterior regions of $X - Y$. Thus, by definition of a spin model, $\Phi(X - Y) = 0$. Since $\Phi(P) \ne 0$, Proposition \ref{inj} tells us that $X - Y = 0,$ as desired. 
\end{proof}

For a spin model of a singly-generated YBPA, $\mathcal{V}$, Proposition \ref{prop:atoms} tells us that if the state sums of any two diagrams of the same box space are equal for all possible assignment of atoms, then the diagrams themselves are equal. In particular, if a relation holds for all possible state sums, then that relation must hold in $\mathcal{V}$.

\begin{definition}\label{def:sym}
A spin model is symmetric if $c_x(a,b) = c_x(b,a)$ for all $a,b\in S$. 
\end{definition}

In \cite{Kau87} and \cite{Jae95}, Kauffman and Jaeger noted that the matrix of weights for the crossing was equal to the adjacency matrix of some graph $\Gamma$.
These theorems can be stated as follows (See Section \ref{graph} for any graph theory definitions):

\begin{theorem}[\cite{Kau87}]
Let $\mathcal{V}$ be a specialization of the Tempereley-Lieb-Jones planar algebra. Let $\Phi:\mathcal{V} \to \text{GPA}(\ast_n)$ be a spin model and $C_X$ be the matrix of weights for the crossing. Then $\Phi$ exists if and only if $C_X$ is the adjacency matrix of the complete graph, $K_n$. 
\end{theorem}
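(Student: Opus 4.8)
The plan is to prove both implications using the reduction machinery developed above, exploiting the fact (established in the discussion of $\text{GPA}(\ast_n)$) that in any spin model the diagram $\cupcapscaleds$ is sent to the identity matrix $I$ while $\identityscaleds$ is sent to the all-ones matrix $J$. Since the $2$-box space of a specialization of the Temperley--Lieb--Jones planar algebra is spanned by $\identityscaleds$ and $\cupcapscaleds$, the crossing satisfies the Kauffman bracket relation $\overcrossing = A\,\identity + A^{-1}\,\cupcap$ inside $\mathcal{V}$ (for a suitable labelling of the two smoothings); this is the relation I would lean on from both sides.

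For the forward direction, suppose a spin model $\Phi$ exists. Because $\Phi$ is a planar algebra map it must preserve the bracket relation, so by linearity $C_X = A\,\Phi(\identityscaleds) + A^{-1}\,\Phi(\cupcapscaleds) = AJ + A^{-1}I$. Writing $J = A_1 + I$, where $A_1 = J - I$ is exactly the adjacency matrix of the complete graph $K_n$, this becomes $C_X = A\,A_1 + (A + A^{-1})\,I$. Hence $C_X$ is a combination $t_1 A_1 + t_0 I$ with $t_1 = A \ne 0$, matching Kauffman's normalized form: its off-diagonal entries are all equal and nonzero, so the graph read off from $C_X$ is forced to be $K_n$.

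For the converse I would construct $\Phi$ directly by decreeing $\cupcapscaleds \mapsto I$ and $\identityscaleds \mapsto J$; since $\mathcal{V}$ is singly generated this determines $\Phi$ on all of $\mathcal{V}$, and the hypothesis that $C_X$ has complete-graph form makes it consistent with the bracket relation. It then remains to verify that $\Phi$ respects every defining relation. Here I would invoke Proposition \ref{prop:onlyp} to reduce to the relations with $R_i = R_j = R_k = P$, and Proposition \ref{prop:atoms} to replace each such relation by the finite collection of scalar identities obtained by comparing state sums over all assignments of atoms to the external regions. The Reidemeister III / Yang--Baxter identities should hold automatically because $C_X$ lies in the commutative algebra generated by $I$ and $J$; the genuine content is the twist and Reidemeister II bookkeeping, which collapses to a single relation between the circle parameter $d$ and $n = |S|$.

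The main obstacle is precisely this last verification in the converse: confirming that the normalization forced by the two circle values $\shadedcirclea = 1$ and $\shadedcircleb = n$ is compatible with the loop value $d$ appearing in $\mathcal{V}$, i.e.\ that the scalar identities produced by Proposition \ref{prop:atoms} are simultaneously satisfiable. I expect this to reduce to a clean numerical condition (the standard $d^2 = n$ for Jones-polynomial spin models); once it holds, Proposition \ref{inj} (using $\Phi(P) \ne 0$) guarantees $\Phi$ is injective, so that the graph extracted from $C_X$ is genuinely the complete graph rather than a degenerate collapse.
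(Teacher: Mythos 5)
Your forward direction is fine, and it is the easy half: since the $2$-box space of a TLJ specialization is spanned by $\identityscaleds$ and $\cupcapscaleds$, whose images in $\GPA(\ast_n)$ are forced to be $J$ and $I$, the Potts form $C_X = AJ + A^{-1}I$ is automatic. (The paper reaches the same conclusion through its projection normalization: for TLJ the second idempotent $Q = \identityscaleds - \cupcapscaleds - P$ vanishes, so equation (\ref{eq:qrela}) forces $C_P = J - I$, literally the adjacency matrix of $K_n$.) The genuine gap is in your converse, at exactly the step you declare automatic. The claim that the Reidemeister III / Yang--Baxter identities ``hold automatically because $C_X$ lies in the commutative algebra generated by $I$ and $J$'' is not a valid argument: Relations 3a and 3b are not statements about ordinary matrix products alone; they mix vertical stacking (matrix product) with the other planar compositions (entrywise products and partial traces over the internal region), and commutativity says nothing about those. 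The paper's own results show this argument cannot work: for \emph{any} strongly regular graph the matrices $I$, $A_1$, $A_2$ span a commutative algebra (its Bose--Mesner algebra), yet by Lemma \ref{lem:3b} and Theorem \ref{main} Relation 3b fails whenever $q_3 - 3q_2 + 3q_1 - q_0 = 0$ (the Smith graphs), and Relation 3a fails for strongly regular graphs that are not $3$-point regular (the Petersen graph). So commutativity of the weight matrices is far from sufficient, and some actual computation with the complete graph is unavoidable.

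That computation is precisely what the paper supplies where your proposal has a placeholder. The paper never proves the cited theorem in isolation; it recovers it as the complete-graph case of its classification: $K_n$ is $\Lambda$-free, so Proposition \ref{prop:graph} and Proposition \ref{prop:strong} give Relations 1--2, Corollary \ref{lfree} and Lemma \ref{thm:3a} give Relation 3a, and Lemma \ref{lem:lfree} (the case of a single complete graph, $k = n-1$) verifies Relation 3b by explicitly evaluating the state sums for every configuration of $(a,b,c)$; the corollary to Theorem \ref{main} (the $\dim V_3 = 5$ computation) then identifies the resulting planar algebra as TLJ. Note also that your proposal places the weight of the argument in the wrong spot: the twist/RII bookkeeping you call ``the genuine content'' corresponds to regularity and strong regularity, which is trivial for $K_n$, and the loop-value compatibility you hope reduces to $d^2 = n$ is already built into the paper's framework by the circle normalization $\shadedcirclea = 1$, $\shadedcircleb = n$ (which forces $|S| = n$ for any spin model). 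The real content of Kauffman's theorem is the star-triangle identity, i.e.\ Relation 3b, and your proposal never proves it.
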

\begin{theorem}[\cite{Jae95}]
Let $\mathcal{V}$ be a specialization of the Kauffman polynomial planar algebra. Let $\Phi:\mathcal{V} \to \text{GPA}(\ast_n)$ be a spin model and $C_X$ be the matrix of weights for the crossing. Then $\Phi$ exists if and only if $C_X$ is the adjacency matrix of some graph $\Gamma$ of $n$ vertices with the following properties:
\begin{enumerate}[label = \roman{enumi}.]
    \item $\Gamma$ has at least 5 vertices.
    \item $\Gamma$ and $\Gamma^c$ are connected.
    \item $\Gamma$ is 3-point regular.
    \item $\Gamma$ is formally self-dual.
\end{enumerate}
\end{theorem}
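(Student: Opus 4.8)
The plan is to exploit that the Kauffman polynomial planar algebra is singly generated, so that a spin model $\Phi$ is completely determined by the single matrix $C_X$, and $\Phi$ is a well-defined planar algebra map if and only if $C_X$ satisfies the image of every defining relation. By Proposition \ref{prop:atoms}, each such relation is equivalent to an identity among state sums holding for every assignment of boundary atoms in $S$. The whole argument then reduces to translating each Reidemeister-type relation into a combinatorial condition on $C_X$ and reading off the four listed properties; necessity and sufficiency come out simultaneously, since a relation holds in $\mathcal{V}$ exactly when its state-sum identity holds for all atoms.

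First I would normalize. The Kauffman skein relation, together with the fact that the identity diagram, the cup-cap, and the crossing span the $2$-box space, lets me write $C_X = t_0 I + t_1 A_1 + t_2 A_2$, where $I$ is the image of the cup-cap (the identity matrix), $J := A_1 + A_2 + I$ is the image of the identity diagram (the all-ones matrix), and $A_1, A_2$ are complementary symmetric $\{0,1\}$ matrices with zero diagonal. Sorting the off-diagonal pairs by which weight they receive defines a graph $\Gamma$ with adjacency matrix $A_1$ and its complement $\Gamma^c$ with adjacency matrix $A_2$; this is what it means for $C_X$ to be the adjacency matrix of $\Gamma$. The twist relation then forces constant row sums, so $\Gamma$ is regular, and Reidemeister II, evaluated at a pair of boundary atoms $(a,b)$, produces a sum over one internal vertex whose value must depend only on whether $a = b$, $a$ is adjacent to $b$, or $a$ is non-adjacent to $b$; equality for all $(a,b)$ is exactly strong regularity of $\Gamma$.

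Next I would impose Reidemeister III, the Yang-Baxter relation 3a/3b. Evaluated at a triple $(a,b,c)$, each side becomes a sum over a single internal vertex ranging over the common neighbors of $\{a,b,c\}$, so the relation holds for all triples precisely when the number of common neighbors of any three vertices depends only on the induced configuration of the triple (Definition \ref{def:nptreg}); this yields condition (iii), $3$-point regularity. The hard part will be extracting the self-duality condition (iv). I expect to obtain it from the relations that force consistency between the two shadings, i.e.\ the lopsided Kauffman relations identifying a diagram with its shading-reversed partner: reevaluating a closed configuration in the opposite shading expresses the model once through $\Gamma$ and once through $\Gamma^c$, and their compatibility is precisely the statement that the Bose-Mesner algebra of $\Gamma$ carries the eigenvalue-matching duality, i.e.\ that $\Gamma$ is formally self-dual. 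Making this identification precise rather than heuristic is the crux of the argument, and it is exactly the opaque step that the body of the paper later reorganizes through the parameter condition $q_3 - 3q_2 + 3q_1 - q_0 \neq 0$.

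Finally I would pin down conditions (i) and (ii) by isolating the Kauffman case among all singly-generated Yang-Baxter planar algebras. By Liu's classification (Theorem \ref{Liu}) and the identification of families, a complete graph produces the Temperley-Lieb-Jones planar algebra and a disjoint union of at least two complete graphs produces a Bisch-Jones planar algebra, so demanding a genuine Kauffman specialization forbids both. By Proposition \ref{prop:lfree}, a strongly regular graph is a disjoint union of complete graphs exactly when it or its complement is $\Lambda$-free, and for strongly regular graphs these are precisely the disconnected cases (or those with disconnected complement); forbidding them therefore forces both $\Gamma$ and $\Gamma^c$ to be connected, which is condition (ii). The five-vertex bound (i) then removes the remaining small strongly regular graphs --- by Lemma \ref{lem:cycle} the relevant low-valence cases are the triangle and square, which are exactly the degenerate Jones and Bisch-Jones examples, leaving the pentagon as the smallest genuine Kauffman graph. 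Assembling these, $\Phi$ exists if and only if all four conditions hold, since each is forced by some relation and, conversely, once all hold every relation's state-sum identity is satisfied, so $\Phi$ extends to a planar algebra map.
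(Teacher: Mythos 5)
First, a point of orientation: the paper never proves this statement at all --- it is quoted from \cite{Jae95} as background, and the paper's own contribution (Theorem \ref{main}) is a \emph{different} classification that deliberately avoids condition (iv), replacing formal self-duality by the parameter condition $q_3 - 3q_2 + 3q_1 - q_0 \ne 0$. So your proposal cannot be checked against a proof in the paper; it has to stand on its own, and as written it does not.

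The parts of your argument that work are exactly the parts the paper establishes for its own theorem: idempotency and minimality of the generator force the relevant matrix to be the adjacency matrix of a graph (Proposition \ref{prop:graph}), Relations 1b and 2b force regularity and strong regularity (Proposition \ref{prop:strong}), and Relation 3a forces 3-point regularity (Lemma \ref{thm:3a}) --- though note that in that relation only one side carries a sum over an internal vertex, the other side being a linear combination of diagrams with shaded middle region, so ``each side becomes a sum over a single internal vertex'' is not accurate. The genuine gap is condition (iv). You yourself write that extracting formal self-duality from the compatibility of the two shadings ``is the crux of the argument'' and that making it precise is what remains; but that step is the entire content of Jaeger's theorem beyond the routine part, and your proposal contains no argument for it, only the assertion that some shading-consistency condition should amount to the eigenvalue-matching duality of the Bose--Mesner algebra. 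Moreover, the implicit suggestion that this step can be outsourced to the paper's condition $q_3 - 3q_2 + 3q_1 - q_0 \ne 0$ is not an equivalence: the pentagon is formally self-dual, satisfies all four of Jaeger's conditions, and gives a spin model for a Kauffman specialization, yet its 3-point parameters are degenerate (the paper's own table lists them as not applicable), and Theorem \ref{main} must treat it as a separate exceptional case, handled by triangle-freeness via Lemma \ref{lem:tfree} rather than by the parameter condition. So relating condition (iv) to the $q$-parameter condition is itself a nontrivial piece of algebraic combinatorics that neither your sketch nor the paper carries out --- the paper sidesteps it by proving a different theorem. Your treatment of conditions (i) and (ii) via Liu's classification (Theorem \ref{Liu}) and Proposition \ref{prop:lfree} is plausible in outline, but the proof of the quoted statement remains incomplete at its central step.
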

The requirement of formal self-duality is a technical condition on the graph that can be cumbersome to check. By generalizing this result to all singly-generated Yang-Baxter planar algebras, the formal self-duality condition will be reformulated into a more manageable formula in terms of the 3-point regular parameters of the graph. 

\begin{example}[Spin models for the Kauffman polynomial]
The following graphs are known to give spin models for the Kauffman polynomial:
\begin{center}
\begin{tabular}{r|cccccccc}
     \multirow{2}{*}{Graph Name} & \multicolumn{8}{c}{Parameters of the graph}  \\
                 & $n$ & $k$ & $\lambda$ & $\mu$ & $q_3$ & $q_2$ & $q_1$ & $q_0$ \\
     \hline
     Pentagon    & 5   & 2  & 0 & 1 & n/a & n/a & n/a & n/a \\
     9-Paley     & 9   & 4  & 1 & 2 & 0 & 0 & 1 & 0 \\
     Clebsch     & 16  & 5  & 0 & 2 & 0 & 0 & 0 & 1 \\
     Higman-Sims & 100 & 22 & 0 & 6 & 0 & 0 & 0 & 2 \\ 
\end{tabular}
\end{center}
Aside from these examples, it is still an open question whether there exist any other graphs that give spin models for the Kauffman polynomial. It is our hope that the classification here will shed some light on other graphs in this hypothetical family.
\end{example}

\begin{example}[A non-symmetric spin model]
In \cite{Jae95b}, it is shown that the 3-cycle gives a non-symmetric spin model for the unique Bisch-Jones planar algebra with $\dim V_3 = 9$ first defined in \cite{BJ00}. This planar algebra is also called the group planar algebra for $\mathbb{Z}/3$. More information on group planar algebras and subgroup-subfactor planar algebras generally can be found in \cite{Gup08}.    
\end{example}\label{spin}

\section{Classification of spin models for singly-generated YBPAs}

For the remainder of this paper, assume that $\mathcal{V}$ is a singly-generated Yang-Baxter planar algebra and let  be its generator. For brevity, we will use the shorthand $P$ for . Since a spin model for $\mathcal{V}$ is a map of planar algebras, $\Phi:\mathcal{V}\to{GPA}(\ast_n)$, we need only define where the generator of $\mathcal{V}$ is sent and check that this assignment respects the relations of $\mathcal{V}$. Because we have the relation 
\begin{center}
$ ~=~ n$
\end{center}
this automatically implies that $|S| = n.$ Thus $n\in\mathbb{Z}_{> 0}$. We will refer to the other relations of Definition \ref{def:YB} by the names given to each in the definition (e.g. Relation 1a, etc.).

Because we have chosen our planar algebra to be non-degenerate, $V_{+2}$ forms a semi-simple algebra (with multiplication defined as vertical stacking). Moreover, since the $\dim V_{+2} = 2$ or $3$ we know that $V_2 \simeq \mathbb{C}^2$ or $\mathbb{C}^3$ as an algebra. Hence, without loss of generality, we may assume that the generator, $P,$ is a minimal idempotent of $V_{+2}$ orthogonal to . 

\begin{remark}
It was explained in \cite{Jon99} (and further clarified in Jones' lectures \cite{Jon11}) that the projections in the 2-box space are closely related to association schemes and their Bose-Mesner algebras. In our setting these association schemes will all come from graphs.
\end{remark}

Let , or $Q$, be the other minimal idempotent of $V_{+2}$ such that $Q$ is orthogonal to both $P$ and $$. By direct computation, we see that 
\begin{equation}\label{eq:qrela}
    ~=~  ~-~  ~-~  
\end{equation}
Thus, 
\begin{equation}\label{eq:qrel}
    ~=~  ~+~  ~+~ 
\end{equation}
Because $\mathcal{V}$ is a singly-generated YBPA, we know that there exists Relation 0 when $R_i = P$. Since, the diagram obtained by rotating $P$ 180 degrees is still a minimal idempotent orthogonal to $$, we know that this rotation must be equal to $P$ or $Q$. In the former case this implies that $c_P(a,b) = c_P(b,a)$ for all $a$ and $b,$ which gives a symmetric spin model by Definition \ref{def:sym}. When the 180-degree rotation of $P$ is $Q$, the spin model is non-symmetric. We begin with the classification of symmetric spin models:

\subsection{Classification of symmetric spin models}\label{section:sym}

Assume that the 180-degree rotation of $P$ is equal to itself. For $P$ and $Q$ we have the following proposition:
\begin{proposition}\label{prop:graph}
Let $P$ and $Q$ be the minimal idempotents of a singly-generated YBPA orthogonal to $$. Then the matrices $C_P$ and $C_Q$ for a symmetric spin model are the adjacency matrices of graphs $\Gamma$ and $\Gamma',$ respectively. Moreover, $\Gamma' = \Gamma^c$, the graph complement of $\Gamma.$
\end{proposition}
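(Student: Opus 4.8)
The plan is to read all the structure of $C_P$ and $C_Q$ off two facts: that $\Phi$ intertwines the multiplication on $V_{+2}$ given by vertical stacking, and that on $\mathrm{GPA}(\ast_n)_{+2}$ this multiplication is the entrywise (Hadamard) product of matrices. First I would pin down that product by a direct state-sum computation. Stacking two $2$-boxes $X$ and $Y$ merges their left outer regions (and those of the product) into a single unshaded region carrying one label $a$, merges their right outer regions into one label $b$, and creates along the glueing a single \emph{shaded} region, which is forced to be $\star$ and so contributes no sum over $S$. Since $X$ and $Y$ have no other unshaded regions, the only surviving state reads $\delta_{ab}$ around each of $X$ and $Y$, giving $c_{XY}(a,b)=c_X(a,b)\,c_Y(a,b)$. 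Thus $C_{XY}=C_X\circ C_Y$, with multiplicative identity $J=C_{\identitys}$ and with $C_{\cupcaps}=I$ an idempotent, consistent with the stated values $\identitys=J$ and $\cupcaps=I$ (indeed $I\circ I=I$ matches $\cupcaps\cdot\cupcaps=\cupcaps$, the interior loop being a shaded circle of value $1$). This region analysis is the one place requiring care, and I expect it to be the main obstacle; everything afterward is formal.

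Next I would extract the three defining features of an adjacency matrix from the idempotent and orthogonality properties of $P$. Because $P$ is an idempotent, $P\cdot P=P$, and applying the planar-algebra map $\Phi$ gives $C_P\circ C_P=C_P$; entrywise this reads $c_P(a,b)^2=c_P(a,b)$, so every entry of $C_P$ lies in $\{0,1\}$. Because $P$ is orthogonal to $\cupcapscaleds$ (both being idempotents of the commutative algebra $V_{+2}$), we have $P\cdot\cupcapscaleds=0$, hence $C_P\circ I=0$; since $C_P\circ I$ is the diagonal part of $C_P$, the diagonal vanishes. Finally, symmetry of the spin model gives $c_P(a,b)=c_P(b,a)$, so $C_P$ is symmetric. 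A symmetric $0/1$ matrix with zero diagonal is exactly the adjacency matrix of a simple loopless graph $\Gamma$. The identical argument applied to $Q$ shows $C_Q$ is the adjacency matrix of a graph $\Gamma'$.

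Finally I would compare the two graphs using the linear relation \eqref{eq:qrel}, namely $\identitys=\cupcaps+\pdiag+\qdiag$. Applying $\Phi$ turns this into $J=I+C_P+C_Q$, i.e.\ $C_P+C_Q=J-I$. For distinct vertices $a\ne b$ this says $c_P(a,b)+c_Q(a,b)=1$ with each summand in $\{0,1\}$, so exactly one of them equals $1$: the vertices $a$ and $b$ are adjacent in $\Gamma$ if and only if they are not adjacent in $\Gamma'$. By the definition of the graph complement this is precisely $\Gamma'=\Gamma^c$, which completes the argument.
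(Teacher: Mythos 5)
Your proof is correct and follows essentially the same route as the paper's: idempotency of $P$ forces $0/1$ entries, orthogonality to the cup-cap kills the diagonal, symmetry of the spin model gives a symmetric matrix, and the relation $\identitys = \cupcaps + \pdiag + \qdiag$ yields $C_P + C_Q = J - I$ and hence $\Gamma' = \Gamma^c$. The only difference is that you explicitly verify that vertical stacking becomes the entrywise (Hadamard) product under $\Phi$, a fact the paper uses implicitly in its diagrammatic state-sum equalities; making it explicit is a reasonable and correct addition, not a different argument.
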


\begin{proof}
We first prove $C_P$ is the adjacency matrix of a graph. Since this is a symmetric spin model, $c_P(a,b) = c_P(b,a)$ for all $a$ and $b.$ Since $P$ is an idempotent, we have that $P^2 = P.$ In diagrammatic terms, this means that  
\[\includegraphics[valign = c, scale = .7]{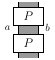} ~=~ \left(\!\xdiag[P]\!\right)^2 ~=~ \xdiag[P]\]
for any $a,b\in S,$ which implies $c_P(a,b) \in \{0,1\}$ for all $a$ and $b.$ Moreover, since $P$ is minimal, that means that
\[  \cdot  ~=~ \includegraphics[valign = c, scale = .8]{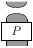} ~=~ 0 \]
which implies that $c_P(a,a) = 0$ for all $a\in S.$  Thus, $C_P$ is a symmetric matrix whose entries consist of only 0s and 1s with 0s on the diagonal. In other words, $C_P$ is the adjacency matrix of some graph, $\Gamma,$ with vertices indexed by the elements of $S$ and 
\[
c_P(a,b) = \left\{ \begin{array}{cl}
    1 & a \text{ adjacent to } b  \\
    0 & \text{else}.
\end{array} \right.
\]
By using an identical argument, we see that $C_Q$ is the adjacency matrix of some graph $\Gamma'.$

By (\ref{eq:qrela}), we know that $C_Q = J - I - C_P$, where $J$ is the matrix of all ones and $I$ is the identity matrix. Thus, $C_P + C_Q = J - I,$ which implies that $c_P(a,b) = 1$ if and only if $c_Q(a,b) = 0$ and vice versa. Thus, by definition $C_Q$ is the adjacency matrix $\Gamma^c$, the graph complement of $\Gamma.$ This completes our proof. 
\end{proof}

Thus, as in \cite{Jae95}, there is an intimate connection between spin models and graphs. For ease, we make the following definitions:
\begin{definition}\label{def:Gspin}
A graph $\Gamma$ \emph{gives a spin model} for a singly-generated YBPA, $\mathcal{V}$, if the adjacency matrix of $\Gamma$ is $C_P$ or $C_Q.$
\end{definition}
\begin{remark}
If we began with $Q$ as our designated minimal idempotent instead of $P$, we would obtain the same spin model. Since the graph of the adjacency matrix given by $C_Q$ is the complement of $C_P$, we will only classify $\Gamma$ up to complementation.  Moreover, because singly-generated YBPAs are non-degenerate by definition this implies that $\Gamma$ has at least one vertex. For convenience, we will assume that $\Gamma$ has at least one vertex for the remainder of this paper.
\end{remark} 
By Proposition \ref{prop:onlyp}, in order to check that one of the defining relations of a singly-generated YBPA planar algebra holds for all $R_i$, it suffices to only check the case when $R_i = P$. As such, we will be justified in not verifying the relations hold when at least one of the $R_i \ne P$. We will say that $\Gamma$ \emph{satisfies} one of the defining relations of a singly-generated YBPA when the corresponding relations of the state sums hold with respect to all possible assignments of vertices to the exterior regions of the relation.

Suppose $\Gamma$ gives a spin model for $\mathcal{V}.$ Fix a labeling of the vertices of $\Gamma$ by the elements of $S$. We can think of a state sum of a diagram of $\mathcal{V}$ with respect to $(a_1,\dots,a_k)$ as counting certain subgraphs involving $a$, $b$, and $c$. For example, with respect to vertices $a$, $b,$ and $c$ the state sum of 
\begin{center}
\begin{tabular}{ccccc}
 \includegraphics[valign = c, scale = .6]{diagrams/pdf/unshadedmiddlep.pdf}
 &&
  is &&
 $\displaystyle \mathlarger{\mathlarger{\sum}}_{x\in S} ~\includegraphics[valign = c]{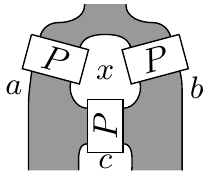} $
\end{tabular}
\end{center}
Since $x$ ranges over all vertices of $\Gamma$, we can think of this state sum as counting the number of $x$ making the following subgraph:
\begin{equation*}
    \includegraphics[valign = c, scale = .7]{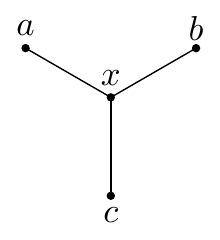}
\end{equation*}
In this case, this state sum counts the number of neighbors that $a$, $b$, and $c$ share.
\begin{proposition}\label{prop:strong}
Let $\Gamma$ be a graph and let $\mathcal{V}$ be a singly-generated YBPA. Then $\Gamma$ satisfies Relation 1b of $\mathcal{V}$ if and only if $\Gamma$ is regular and satisfies Relation 2b if and only if $\Gamma$ is strongly regular.
\end{proposition}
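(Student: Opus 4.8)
The plan is to pass from the diagrammatic relations to numerical conditions on $\Gamma$ using the state-sum dictionary, and to recognize those conditions as regularity and strong regularity. Two earlier results do the bookkeeping: by Proposition~\ref{prop:onlyp} it is enough to impose each relation in the case $R_i=R_j=R_k=P$, and by Proposition~\ref{prop:atoms} the relation holds in $\mathcal V$ exactly when its two sides have equal state sums for every labeling of the exterior regions. Recall also the translation underlying Proposition~\ref{prop:graph}: the cap diagram \cupcaps\ is sent to the identity matrix $I$, the through-strands \identitys\ to the all-ones matrix $J$, and $P$ to the adjacency matrix $C_P$ of $\Gamma$. A state sum of a composite diagram is obtained by summing the product of the generator weights over all atoms placed in the internal unshaded regions, so composites are computed by the corresponding matrix operations.

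First I would treat Relation~1b. The twist relation closes one pair of legs of $P$ off into a single-strand ($1$-box) diagram, whose state sum with respect to a single atom $a$ is $\sum_{x\in S}c_P(a,x)$, i.e.\ the valence of $a$ in $\Gamma$. The opposite side of Relation~1b is a scalar multiple of the analogous closure of a diagram in the span of \cupcaps\ and \identitys, whose state sum is a constant independent of $a$. Hence the two state sums agree for every $a$ if and only if every vertex of $\Gamma$ has the same valence, which by Definition~\ref{def:1regular} is exactly regularity; both directions follow at once.

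Next I would treat Relation~2b. The essential point is that its left-hand side is a composite of two copies of $P$ joined along a single internal unshaded region, so its state sum with respect to $(a,b)$ is $\sum_{x\in S}c_P(a,x)\,c_P(x,b)=(C_P^2)_{ab}$, the number of common neighbors of $a$ and $b$. Its right-hand side is a linear combination of the basis diagrams \cupcaps, \identitys, and $P$---equivalently of $I$, $C_P$, and $C_Q=J-I-C_P$ by \eqref{eq:qrela}---so its state sum with respect to $(a,b)$ depends only on whether $a=b$, $a$ is adjacent to $b$, or $a$ is non-adjacent to $b$. Consequently Relation~2b holds for all $(a,b)$ if and only if the common-neighbor count is a function of this coarse relationship alone, which is precisely the defining condition of strong regularity in Definition~\ref{def:strong}; reading off the three cases identifies the diagonal value with $k$ (so that strong regularity subsumes regularity), the adjacent value with $\lambda$, and the non-adjacent value with $\mu$. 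For the converse one checks that, given a strongly regular $\Gamma$, the values $k,\lambda,\mu$ prescribe coefficients making the two sides of Relation~2b equal.

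I expect the main obstacle to be the diagrammatic bookkeeping rather than the algebra. One must verify that the closure in Relation~1b really returns the valence, and---more delicately---that Relation~2b joins the two copies of $P$ along exactly one summed region, so that its left-hand side is the \emph{matrix} square $C_P^2$ and not the Hadamard square that governs the multiplication of $V_{+2}$ (recall that $P^2=P$ forces $c_P(a,b)\in\{0,1\}$ precisely because the algebra product is entrywise). One must also confirm that each right-hand side genuinely lies in the span of \cupcaps, \identitys, and $P$, so that its state sum is constant on each of the three relationship-classes. Once these identifications are secured, matching the three cases to the parameters $k$, $\lambda$, $\mu$ is immediate and both equivalences drop out.
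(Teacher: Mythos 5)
Your proof is correct and follows essentially the same route as the paper's: both translate Relations 1b and 2b into state-sum identities via Proposition \ref{prop:atoms}, interpret the capped-off $P$ as counting valences and the horizontally doubled $P$ as counting common neighbors, and read off regularity and strong regularity from the fact that the right-hand sides are constant on the three relationship classes $a=b$, adjacent, non-adjacent. Your matrix phrasing of the Relation 2b step ($C_P^2$ versus the span of $I$, $C_P$, and $C_Q=J-I-C_P$) is only a cosmetic repackaging of the paper's subgraph-counting cases, and your point distinguishing the matrix product (horizontal composition with a summed internal region) from the entrywise product governing $P^2=P$ in $V_{+2}$ is correctly resolved.
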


\begin{proof}
Suppose $\Gamma$ satisfies Relation 1b. By Proposition \ref{prop:graph}, we know that $C_P$ is the adjacency matrix of some graph, $\Gamma$. Then the state sum of Relation 1b tells us that 
\begin{equation}\label{eq:1b}
\includegraphics[valign = c]{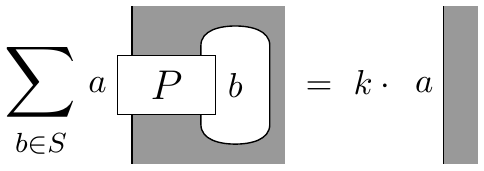}  
\end{equation}
for some $k\in\mathbb{C}$.  Thus, in graph theoretic terms, the left-hand side of the equation counts the number of neighbors of any vertex $a$.  Since $k$ is not dependent on $a$, the right-hand side of Relation 1b says that every vertex of $\Gamma$ has the same number of neighbors. In other words, $\Gamma$ is a regular (or 1-pt regular) graph by Definition \ref{def:1regular}. 

Suppose $\Gamma$ is regular such that each vertex has $k$ neighbors. Then for all $a$ and $b$ (\ref{eq:1b}) holds by definition of regularity, so by Proposition \ref{prop:atoms} Relation 1b of $\mathcal{V}$ must be equivalent to (\ref{eq:1b}). Thus, by definition, $\Gamma$ satisfies Relation 1b. 

Suppose $\Gamma$ satisfies Relation 2b. Then we know that
\begin{equation*}
  \begin{tikzpicture}[baseline = 0cm]
\draw[fill = black!40!, draw = none] (-.35,-.6) rectangle (1.85,.6);
\draw[fill = white] (.35,.3) to[out = 90, in = 180]  (.75,.55) 
							 to[out = 0, in = 90]    (1.15,.3)
					  		 to[out = -90, in = 90]  (1.15,-.3) 
					  		 to[out = -90, in = 0]   (.75, -.55)
					  		 to[out = 180, in = -90] (.35,-.3)
					  		 to[out = 90, in = -90]  (.35,.3);
\draw (-.35,-.6) -- (-.35,.6);
\draw[fill = white] (-.5,-.3) rectangle (.5,.3);
\draw (1.85,-.6) -- (1.85,.6);
\draw[fill = white] (1,-.3) rectangle (2,.3);
\node at (0,0) {\small{{$P$}}};
\node at (1.5,0) {\small{{$P$}}};
\end{tikzpicture} ~=~ c_1 ~+~ c_2 ~+~ c_3\cdot ~=~ k' + \lambda\cdot +\mu\cdot 
\end{equation*}
For any $a,b\in S,$ this implies that
\begin{equation}\label{2batoms}
\mathlarger{\mathlarger{\sum}}_{x\in S} \begin{tikzpicture}[baseline = 0cm]
\draw[fill = black!40!, draw = none] (-.35,-.6) rectangle (1.85,.6);
\draw[fill = white] (.35,.3) to[out = 90, in = 180]  (.75,.55) 
							 to[out = 0, in = 90]    (1.15,.3)
					  		 to[out = -90, in = 90]  (1.15,-.3) 
					  		 to[out = -90, in = 0]   (.75, -.55)
					  		 to[out = 180, in = -90] (.35,-.3)
					  		 to[out = 90, in = -90]  (.35,.3);
\draw (-.35,-.6) -- (-.35,.6);
\draw[fill = white] (-.5,-.3) rectangle (.5,.3);
\draw (1.85,-.6) -- (1.85,.6);
\draw[fill = white] (1,-.3) rectangle (2,.3);
\node at (0,0) {\small{{$P$}}};
\node at (1.5,0) {\small{{$P$}}};
\node at (-.7,0) {\small{{$a$}}};
\node at (.7,0) {\small{{$x$}}};
\node at (2.2,0) {\small{{$b$}}};
\end{tikzpicture} ~=~  k'\cdot ~+~ \lambda\cdot\xdiag[P] ~+~ \mu\cdot\xdiag[Q]   
\end{equation}
In graph-theoretic terms, we can think of the left-hand side of (\ref{2batoms}) as counting the number of subgraphs of one of the following forms:
\begin{center}
 \includegraphics[valign = c, scale = .7]{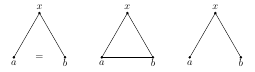}     
\end{center}
depending on how $a$ and $b$ relate to each other. For each one of the above subgraphs, note that exactly one of the diagrams on the right-hand side of (\ref{2batoms}) survives by Proposition \ref{prop:graph}. Thus, this says that when $a$ and $b$ are equal, the number of $x$ adjacent to $a$ is $k'$, which shows that $k' = k$. When, $a$ and $b$ are adjacent, only $P$ survives on the right-hand side. Thus, (\ref{2batoms}) tells us that the number of $x$ adjacent to $a$ and $b$ is some constant $\lambda$. Similarly, when $a$ and $b$ are not adjacent, the equation tells us that the number of $x$ adjacent to $a$ and $b$ is some constant $\mu$. Thus by Definition \ref{def:strong}, $\Gamma$ is strongly regular. 

Suppose $\Gamma$ is strongly regular with parameters $(n,k,\lambda,\mu)$. Then for all $a$ and $b$ (\ref{2batoms}) holds, and so by Proposition \ref{prop:atoms} $\Gamma$ satisfies Relation 2b. This completes our proof.  
\end{proof}

Thus, Relations 1a and 2a tell us that $C_P$ is the adjacency matrix for some graph, $\Gamma,$ while Relations 1b and 2b indicate that $\Gamma$ is strongly regular. We now describe the conditions Relations 3a and 3b put on $\Gamma$. 

\begin{lemma}\label{thm:3a}
Let $\Gamma$ be a strongly regular graph. Then $\Gamma$ satisfies Relation 3a if and only if $\Gamma$ is 3-point regular. 
\end{lemma}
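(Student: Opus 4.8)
The plan is to mirror the proof of Proposition \ref{prop:strong}, where Relation 1b encoded regularity and Relation 2b strong regularity; I would show that Relation 3a analogously encodes the triple-neighbor condition of Definition \ref{def:nptreg}. By Proposition \ref{prop:onlyp} it suffices to verify Relation 3a in the single case $R_i = R_j = R_k = P$. I would begin by expanding both sides of this instance of Relation 3a, using Relation 0 to resolve each product of $P$-strands; this produces the two fifteen-element sets $X$ and $Y$ of Lemma \ref{twodiag}. That lemma is the crucial input: every diagram except the two ``unresolvable'' ones comprising $(X\cap Y)'$ occurs in both $X$ and $Y$, so all of those shared terms match across the equation and cancel. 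Relation 3a therefore collapses to a single essential equation relating the coefficient of the unshaded unresolvable diagram to that of the shaded one, with every other coefficient already pinned down by the strongly regular parameters $(n,k,\lambda,\mu)$ furnished by Proposition \ref{prop:strong}.

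Next I would pass to state sums with respect to three external vertices $a$, $b$, $c$. As recorded in the discussion preceding Proposition \ref{prop:strong}, the state sum of the unshaded unresolvable diagram with respect to $(a,b,c)$ is exactly the number of common neighbors of $a$, $b$, and $c$. Since every remaining term in the collapsed equation is determined by the mutual adjacency pattern of $\{a,b,c\}$ together with the $(n,k,\lambda,\mu)$ data, Relation 3a holds for a given $(a,b,c)$ precisely when this common-neighbor count equals the value forced by the rest of the equation. For the forward direction, satisfying Relation 3a for all $(a,b,c)$ forces the common-neighbor count of every triple to depend only on the adjacency type of $\{a,b,c\}$; the possible types are exactly those enumerated in Example \ref{threeptreg}, so $\Gamma$ is $3$-point regular by Definition \ref{def:nptreg}, the counts being recorded as $q_3, q_2, q_1, q_0$. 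Conversely, if $\Gamma$ is $3$-point regular, the common-neighbor count is already a function of the triple type, so both sides of the collapsed equation agree for every $(a,b,c)$; by Proposition \ref{prop:atoms} this equality of all state sums upgrades to the diagram identity in $\mathcal{V}$, so $\Gamma$ satisfies Relation 3a.

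I expect the main obstacle to be the bookkeeping in the reduction step: after expanding the all-$P$ instance of Relation 3a with Relation 0, one must check that the coefficient attached to each shared diagram of Lemma \ref{twodiag} is genuinely the same on both sides, so that these terms cancel and leave only the two diagrams of $(X\cap Y)'$. This amounts to tracking how the resolution coefficients of Relation 0 combine with the strongly regular parameters, and confirming that the single surviving constraint is exactly the triple common-neighbor count. Once this cancellation is in hand, both implications follow formally from the state-sum dictionary and Proposition \ref{prop:atoms}, just as in Proposition \ref{prop:strong}.
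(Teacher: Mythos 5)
Your forward direction is essentially the paper's argument and is fine: writing the all-$P$ instance of Relation 3a in state-sum form with respect to a triple $(a,b,c)$, the left side counts the common neighbors of $a$, $b$, $c$, while every term on the right depends only on the adjacency type of the triple, so 3-point regularity follows. But two things go wrong elsewhere. First, your reading of Lemma \ref{twodiag} is off: in the all-$P$ instance of Relation 3a the left-hand side is a \emph{single} diagram, so there are no shared terms to ``cancel''; the actual role of that lemma is to make Relations 3a and 3b trivial when some input is the identity or the cup-cap, i.e.\ it underlies the reduction you already performed by citing Proposition \ref{prop:onlyp}, and it does nothing further here. Relatedly, your claim that the coefficients of the remaining terms are ``pinned down by $(n,k,\lambda,\mu)$'' is false: the explicit relation (see (\ref{eq:unshadedmiddleb}) in Lemma \ref{lem:3b}) has coefficients involving $q_3,q_2,q_1,q_0$, which are not functions of the strongly regular parameters --- that is exactly why 3-point regularity is a strictly stronger condition than strong regularity.

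Second, and this is the genuine gap: your converse never produces the relation whose existence is to be proved. Saying ``both sides of the collapsed equation agree for every $(a,b,c)$'' is circular, because the collapsed equation has unspecified coefficients; before Proposition \ref{prop:atoms} can be applied you must exhibit a specific linear combination of shaded diagrams whose state sums match the common-neighbor counts for every triple. The paper fills this with a construction you omit: for each adjacency type of a triple, form the shaded diagram whose three 2-boxes are the cup-cap, $P$, or $Q$ according to whether the corresponding pair of vertices is equal, adjacent, or non-adjacent. Such a diagram has state sum $1$ exactly on triples of that type and $0$ otherwise, so 3-point regularity gives, as an identity of state sums, that the summed all-$P$ diagram equals $\sum_i q_i\cdot(\text{indicator diagram for type } i)$. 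Substituting $Q$ via (\ref{eq:qrela}) then converts the right-hand side into the required form of Relation 3a, and only at that point does Proposition \ref{prop:atoms} upgrade the state-sum identity to an identity in $\mathcal{V}$. Without this construction (or some other proof that the linear system for the coefficients is solvable), the converse direction is unproved.
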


\begin{proof}
Consider Relation 3a:
\begin{equation}\label{eq:unshadedmiddle2}
\begin{split}
     \includegraphics[valign = c,scale = .6]{diagrams/pdf/unshadedmiddlep.pdf} 
   & =~ \includegraphics[valign = c, scale = .95]{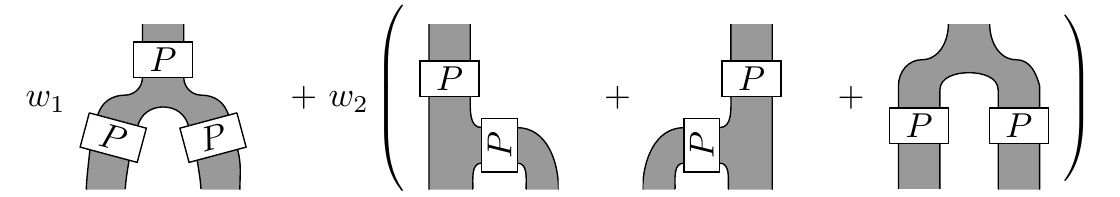} 
   \\
   & \includegraphics[valign = c,, scale = .95]{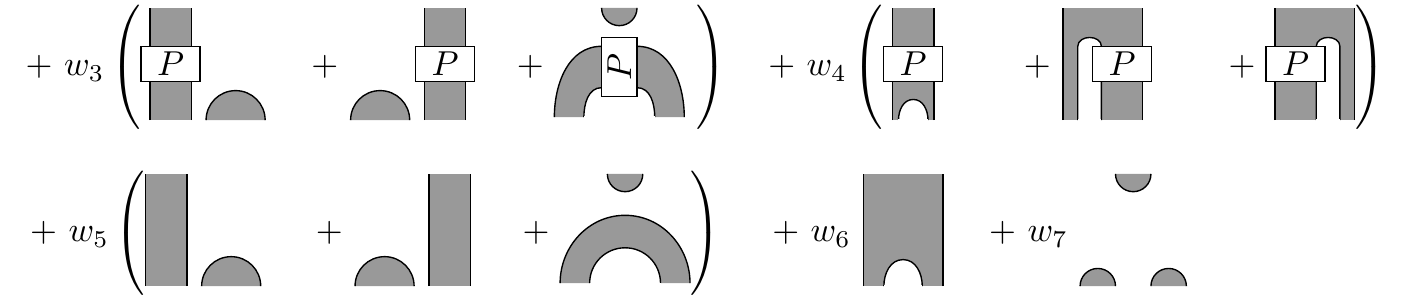} 
\end{split}
\end{equation}
Note that the right-hand side of Relation 3a must be of the above form since the left-hand side is invariant under 2-click rotations. Let $\Gamma$ be a graph and $a,$ $b,$ and $c$ be vertices of $\Gamma$. Then (\ref{eq:unshadedmiddle2}) gives us the following state sum equation: 
\begin{equation}\label{eq:unshadedmiddle}
\begin{split}
    \mathlarger{\mathlarger{\sum}}_{x\in S} ~\includegraphics[valign = c, scale = .9]{diagrams/pdf/unshadedmiddlepatoms.pdf}  
   & =~ \includegraphics[valign = c, scale = .95]{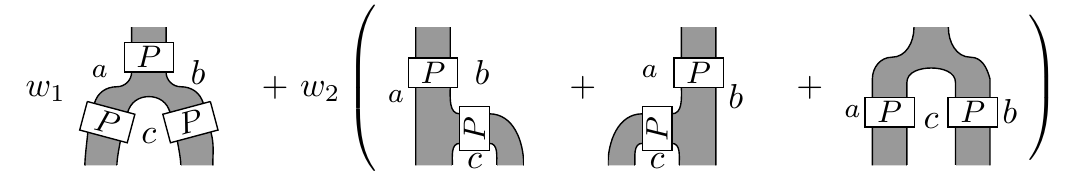} 
   \\
   & \includegraphics[valign = c, scale = .95]{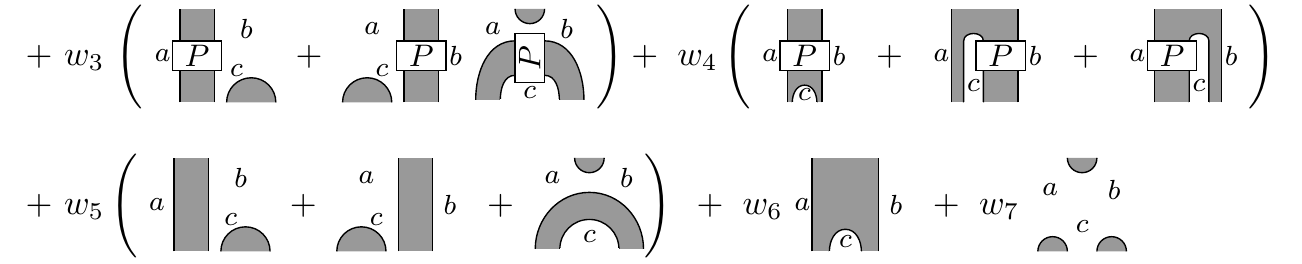} 
\end{split}
\end{equation}
For some fixed $a$, $b$ and $c,$ we know the left-hand side of (\ref{eq:unshadedmiddle}) can be interpreted as the number of $x$ adjacent to $a$, $b$, and $c$. Note that for any fixed $a$, $b$, and $c$ the state sums on the right-hand side can be either $1$ or $0$. Moreover, that value is only dependent on how $a,$ $b,$ and $c$ relate to each other. Thus, the number of vertices connected to $a,$ $b,$ and $c$ is only dependent on how those vertices relate to each other. This is exactly the statement of Definition \ref{def:nptreg} when $n = 3$. Thus if $\Gamma$ satisfies Relation 3a then $\Gamma$ is 3-point regular. 

If $\Gamma$ is 3-point regular, then the number of $x$ adjacent to any $a,$ $b,$ and $c$ depends only on how those points relate to each other. Consider 
\begin{equation}\label{eq:3a2}
    \includegraphics[valign = c]{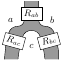} 
\end{equation}
Note that there (\ref{eq:3a2}) is either 1 or 0 for all $a,b,c\in S$ and $\xdiagscaled[R_{ab}] \in \left\{, ~, ~ \right\}$ by setting
    \[ \begin{tikzpicture}[baseline = -.05cm]
        \draw[fill = black!40!, draw = none] (-.35,-.5) rectangle (.35,.5);
        \draw (-.35,-.5) -- (-.35,.5);
        \draw (.35,-.5) -- (.35,.5);
        \draw[fill = white] (-.5,-.3) rectangle (.5,.3);
        \node at (0,0) {\large{$R_{ab}$}};
        \end{tikzpicture} 
        = \left\{
    \begin{array}{cl}
         & \text{if } a~=~b \\
        \\
          & \text{if } a \text{ is adjacent to } b \\
        \\
         & \text{if } a \text{ is not adjacent to } b     
    \end{array} \right.
    \]
and similarly for $R_{ac}$ and $R_{bc}$. Moreover, there is exactly one diagram of the form (\ref{eq:3a2}) that is is equal to 1 for any vertices $a$, $b,$ and $c.$ Since $\Gamma$ is 3-point regular, then  
\begin{equation}\label{eq:3a3}
    \mathlarger{\mathlarger{\sum}}_{x\in S} ~\includegraphics[valign = c, scale = .9]{diagrams/pdf/unshadedmiddlepatoms.pdf} 
  = \mathlarger{\mathlarger{\sum}}_{i} ~q_i\cdot \includegraphics[valign = c, scale = .9]{diagrams/pdf/shadedmiddlepatomsb.pdf} 
\end{equation}
where $i$ ranges over all possible relationships among $a,$ $b,$ and $c,$ in $\Gamma$ and $q_i$ is the corresponding number of $x$ adjacent to $a,$ $b,$ and $c.$ Using (\ref{eq:qrela}), we can transform (\ref{eq:3a3}) into an equation of the form (\ref{eq:unshadedmiddle}). Thus, by  Proposition \ref{prop:atoms}, (\ref{eq:unshadedmiddle2}) must hold in $\mathcal{V}$. Thus, $\Gamma$ satisfies Relation 3a. Hence, $\Gamma$ satisfies Relation 3a if and only if $\Gamma$ is 3-point regular. 
\end{proof}

Thus, in order for $\Gamma$ to give a spin model, it is necessary to be 3-point regular. We will see, however, that this is not sufficient to satisfy Relation 3b. We will now determine the necessary and sufficient conditions for $\Gamma$ to satisfy Relation 3b. 

\begin{lemma}\label{lem:tfree}
Let $\Gamma$ give a spin model for a singly-generated YBPA, $\mathcal{V}$. Then $\Gamma$ is triangle-free if and only if the following relation holds in $\mathcal{V}$: 
\begin{equation}\label{eq:tfree}
     \includegraphics[valign = c, scale = .6]{diagrams/pdf/shadedmiddlep.pdf}  ~=~ 0.
\end{equation}
Moreover, if $\Gamma$ is a triangle-free graph then $\Gamma$ satisfies Relation 3b. 
\end{lemma}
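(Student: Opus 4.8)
The plan is to unpack Lemma \ref{lem:tfree} by reading the diagrammatic equation (\ref{eq:tfree}) through the state-sum dictionary established in Section \ref{spin} and Proposition \ref{prop:atoms}. The diagram \includegraphics[valign = c, scale = .6]{diagrams/pdf/shadedmiddlep.pdf} is the shaded middle triangle built from three copies of $P$, so for fixed external vertices $a$, $b$, $c$ its state sum counts, by the same bookkeeping used in Lemma \ref{thm:3a} and Proposition \ref{prop:strong}, the number of ways to assign an internal vertex so that the resulting subgraph is a triangle on $\{a,b,c\}$; concretely the coefficient attached to $\delta_{abc}$ is $c_P(a,b)\,c_P(b,c)\,c_P(a,c)$. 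Since $C_P$ is a $0/1$ adjacency matrix with zero diagonal (Proposition \ref{prop:graph}), this product is $1$ exactly when $a$, $b$, $c$ are pairwise adjacent and $0$ otherwise. Thus the vanishing of (\ref{eq:tfree}) as an element of $\mathcal{V}$ is, via Proposition \ref{prop:atoms}, equivalent to the statement that this product is $0$ for every triple $(a,b,c)$, i.e.\ that $\Gamma$ has no triangle — which is precisely triangle-freeness in the sense of Definition \ref{def:graphtypes}.

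First I would carefully identify the three strand-labels of the shaded middle diagram with the three pairwise comparisons of $a$, $b$, $c$, so that each $P$-box contributes the factor $c_P$ of the adjacent pair. This is the step that converts the diagram into the monomial $c_P(a,b)c_P(b,c)c_P(a,c)$; I would phrase it exactly as the analogous state-sum computations were phrased for Relations 1b, 2b, and 3a, where a closed region forces an internal summation and an open boundary forces a product of boundary weights. Next I would invoke Proposition \ref{prop:atoms}: the diagram on the left of (\ref{eq:tfree}) equals the zero diagram in $\mathcal{V}$ if and only if its state sum vanishes for all choices of $(a,b,c)$, which establishes the biconditional in the first assertion.

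For the second, ``moreover'' assertion — that triangle-freeness forces $\Gamma$ to satisfy Relation 3b — I would argue as follows. Relation 3b expresses the shaded middle diagram as a linear combination of the fifteen unshaded middle diagrams catalogued in Lemma \ref{twodiag}; by Lemma \ref{thm:3a} and the orthogonal-idempotent decomposition (\ref{eq:qrel}) the coefficients are governed by the $3$-point-regular parameters $q_i$. When $\Gamma$ is triangle-free, the first assertion kills the purely-$P$ triangle term on the left, and I would check that each surviving state sum on the right is likewise forced by the combinatorics of triangle-freeness to match. The mechanism is the same as in Lemma \ref{thm:3a}: for fixed $(a,b,c)$ each building-block diagram of the form (\ref{eq:3a2}) evaluates to $0$ or $1$ depending only on the mutual relations of $a$, $b$, $c$, and exactly one survives, so Relation 3b reduces to a finite family of numerical identities among the $q_i$; triangle-freeness forces $q_3 = 0$ (no common neighbour of a triangle because there is no triangle) and it is this vanishing that makes the required identities hold.

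The main obstacle I expect is the second assertion rather than the clean first biconditional. Establishing that triangle-freeness \emph{alone} suffices for Relation 3b requires showing that, once the triangle configuration is excluded, every component of the $3$-click-rotationally-constrained expansion (\ref{eq:unshadedmiddle2}) collapses correctly — in particular I would need to verify that the diagrams whose state sums involve a triangle on $\{a,b,c\}$ never contribute, and that the remaining diagrams are exactly balanced by the $q_i$ coefficients dictated by $3$-point regularity. Because Relation 3b is the shaded analogue of Relation 3a and, unlike 3a, is not automatic from $3$-point regularity (as the text warns immediately before the lemma), the delicate point is to isolate precisely which algebraic identity among $\{q_3,q_2,q_1,q_0,k,\lambda,\mu\}$ is being imposed and to confirm that $q_3 = 0$ discharges it; I would organize the bookkeeping by the four triple-types of Example \ref{threeptreg} and check each case separately.
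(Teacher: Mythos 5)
Your proof of the first biconditional is correct and is essentially the paper's own argument: the state sum of the shaded $P$-triangle with respect to $(a,b,c)$ is $c_P(a,b)\,c_P(b,c)\,c_P(a,c)$, which vanishes for every triple exactly when $\Gamma$ is triangle-free, and Proposition \ref{prop:atoms} upgrades this family of numerical identities to the diagrammatic identity (\ref{eq:tfree}) in $\mathcal{V}$.

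The ``moreover'' claim is where your proposal has a genuine gap. You route the argument through Lemma \ref{thm:3a} and coefficients ``governed by the $3$-point-regular parameters $q_i$,'' and you close by saying triangle-freeness forces $q_3=0$. But triangle-freeness does not give $3$-point regularity, and the $q_i$ need not exist at all: the Petersen graph (which the paper cites later for exactly this reason) is triangle-free and strongly regular, satisfies Relation 3b by this very lemma, yet is \emph{not} $3$-point regular and fails Relation 3a. So any argument that presupposes the $q_i$ cannot establish the ``moreover'' statement in its stated generality; moreover, in a triangle-free graph $q_3$ is vacuous rather than forced to be $0$, since there is no triple of pairwise-adjacent vertices whose common neighbours it would count. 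The observation you are missing is much simpler, and it is all the paper does: Relation 3b only asserts that the shaded $P$-triangle equals \emph{some} linear combination of the unshaded diagrams, and once (\ref{eq:tfree}) holds this is witnessed by the zero linear combination. By Proposition \ref{prop:onlyp} only the case $R_i=R_j=R_k=P$ needs checking, and by Proposition \ref{prop:atoms} the identically-vanishing state sums force this identity in $\mathcal{V}$; there is nothing on the right-hand side to ``match,'' because the existence statement in Relation 3b is satisfied trivially.
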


\begin{proof}
Suppose $\Gamma$ is triangle-free. Then 
\begin{equation}\label{eq:tfree2}
     \includegraphics[valign = c, scale = .9]{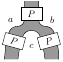}  ~=~ 0
\end{equation}
for all $a$, $b$, and $c$. Thus, Proposition \ref{prop:atoms} tells us that (\ref{eq:tfree}) holds in $\mathcal{V}$. If (\ref{eq:tfree}) holds in $\mathcal{V}$, then (\ref{eq:tfree2}) holds for all $a,b,c\in S$, which implies $\Gamma$ is triangle-free by definition. Thus, $\Gamma$ is triangle-free if and only if (\ref{eq:tfree}) holds in $\mathcal{V}$. Moreover, we can conclude that $\Gamma$ satisfies Relation 3b in this case, as (\ref{eq:tfree}) is equivalent to Relation 3b by Proposition \ref{prop:atoms}. 

\end{proof}

\begin{lemma}\label{lem:lfree}
Let $\Gamma$ give a spin model for a singly-generated YBPA, $\mathcal{V}$. Then $\Gamma$ is a strongly regular, $\Lambda$-free graph with regularity parameter $k$ if and only if the following relation holds in $\mathcal{V}$: 
\begin{equation}\label{eq:lfree}
     \includegraphics[valign = c, scale = .87]{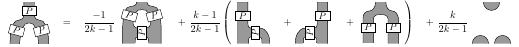}
\end{equation}
Moreover, if $\Gamma$ is a strongly regular and $\Lambda$-free graph then $\Gamma$ satisfies Relation 3b.
\end{lemma}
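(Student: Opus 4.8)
The plan is to mirror the proof of Lemma \ref{lem:tfree}, using Proposition \ref{prop:atoms} as the bridge between genuine relations in $\mathcal{V}$ and equalities of the underlying state sums, and using Proposition \ref{prop:graph} to read each diagram as a count of subgraphs of $\Gamma$. Fixing a labeling of the vertices of $\Gamma$ by $S$, I would first interpret both sides of (\ref{eq:lfree}): taking the state sum with respect to a pair $a,b$, the left-hand side counts the common neighbors of $a$ and $b$, exactly as in (\ref{2batoms}) of Proposition \ref{prop:strong}, while the right-hand side evaluates (via Proposition \ref{prop:graph}, where $\cupcapscaleds$, $\pdiagscaled$, $\qdiagscaled$ detect $a=b$, $a\sim b$, $a\not\sim b$ respectively) to $k$ when $a=b$, to $k-1$ when $a$ is adjacent to $b$, and to $0$ when $a$ is not adjacent to $b$. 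Thus, after state sums, (\ref{eq:lfree}) records precisely that the number of common neighbors of $a$ and $b$ is $k$, $k-1$, or $0$ according to whether $a=b$, $a\sim b$, or $a\not\sim b$.

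For the forward direction, suppose $\Gamma$ is strongly regular and $\Lambda$-free with regularity parameter $k$. By Proposition \ref{prop:lfree}, $\Gamma\cong mK_{k+1}$, so its parameters are $(n,k,\lambda,\mu)=(m(k+1),k,k-1,0)$: two adjacent vertices lie in a common clique $K_{k+1}$ and share its other $k-1$ vertices, while two non-adjacent vertices lie in different cliques and share none. These are exactly the counts identified above, so by Proposition \ref{prop:atoms} the relation (\ref{eq:lfree}) holds in $\mathcal{V}$. Conversely, if (\ref{eq:lfree}) holds, then the common-neighbor count of $a$ and $b$ depends only on how $a$ and $b$ relate, so $\Gamma$ is strongly regular by Definition \ref{def:strong} (equivalently, (\ref{eq:lfree}) is the instance of Relation 2b with $\mu=0$, so Proposition \ref{prop:strong} applies); since $\mu=0$, no vertex is a common neighbor of a pair of non-adjacent vertices, which is exactly the statement that $\Gamma$ has no induced $\Lambda$, i.e. $\Gamma$ is $\Lambda$-free, with regularity parameter $k$ read off from the $\cupcapscaleds$-coefficient.

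For the \emph{moreover} claim I would reduce Relation 3b, as in Lemma \ref{lem:tfree}, using Lemma \ref{twodiag}: among the shaded-middle diagrams obtained by setting $R_i=R_j=R_k=P$, every one except $\includegraphics[valign = c, scale = .55]{diagrams/pdf/shadedmiddlep.pdf}$ already lies in the unshaded-middle set $X$, so it suffices to express this single diagram as a linear combination of unshaded-middle diagrams. Whereas in the triangle-free case this diagram vanished, here its state sum with respect to distinct $a,b,c$ is the indicator that $\{a,b,c\}$ spans a triangle, which is nonzero. The approach is to compute, in $mK_{k+1}$, the state sums of the unshaded-middle diagrams for each configuration of $a,b,c$ (their coincidence pattern together with the induced subgraph on the distinct ones) and solve the resulting linear system for coefficients expressing the shaded diagram in terms of the unshaded ones; the key structural point is that any common neighbor of $a,b,c$ forces all of them into a single clique, so the relevant counts are controlled entirely by $k$. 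Proposition \ref{prop:atoms} then upgrades the resulting state-sum identity to a genuine relation in $\mathcal{V}$, establishing Relation 3b. I expect this last bookkeeping to be the main obstacle: the difficulty is not conceptual but lies in tracking the coincidence cases among $a,b,c$ and the $k$-dependence of the counts carefully enough to exhibit a consistent solution.
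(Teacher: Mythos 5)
Your proposal founders on a misidentification of the relation (\ref{eq:lfree}). You read it as a two-box identity---the instance of Relation 2b with parameters $(k,k-1,0)$, checked against pairs $(a,b)$---but (\ref{eq:lfree}) is a relation in the \emph{three-box} space: it is the $\Lambda$-free analogue of the relation of Lemma \ref{lem:tfree}, i.e.\ the explicit form that Relation 3b takes when $\Gamma$ is a disjoint union of complete graphs, with coefficients depending on $k$. This is visible even without the paper's proof of the lemma: the proof of Proposition \ref{prop:3b2} argues ``If Relation 3b is of the form of Lemma \ref{lem:tfree} or Lemma \ref{lem:lfree}, then $\Gamma$ must be triangle-free or $\Lambda$-free,'' which only parses if (\ref{eq:lfree}) is itself a candidate form of Relation 3b; and indeed the paper's proof evaluates the state sums of (\ref{eq:lfree}) at triples $(a,b,c)$ (at $a=b=c$ to read off the valence $k$, and at $a=c$ with $a\not\sim b$ to read off $\Lambda$-freeness), not at pairs. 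Under your two-box reading, the ``if and only if'' collapses to a restriction of Proposition \ref{prop:strong} (plus the observation that $\mu=0$ is the same as $\Lambda$-free), which is already proved---a sign that the reading cannot be what the lemma intends.

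The consequence is that all of the lemma's actual content lands in your final paragraph, and there you stop at a plan rather than a proof. The strategy you describe---reduce Relation 3b via Lemma \ref{twodiag} to expressing the all-$P$ shaded-middle diagram in terms of unshaded-middle diagrams, compute state sums in $mK_{k+1}$, and solve the resulting linear system---is sound and is essentially what the paper does, except the paper \emph{exhibits} the solution: (\ref{eq:lfree}) is that explicit $k$-dependent relation, and the forward direction verifies it case by case ($k=0$, $k=1$, and $k\geq 2$ via the parameters $q_3=k-2$, $q_2=q_1=q_0=0$ from Corollary \ref{lfree}), while the converse and the ``moreover'' are then read off from the formula. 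You instead declare the bookkeeping to be the expected ``main obstacle'' and hope for ``a consistent solution''; producing that solution \emph{is} the lemma, so as written you have established neither direction of the stated equivalence nor the ``moreover.'' The deferred computation also hides a genuine subtlety your sketch does not notice: in the common-neighbor identity for $mK_{k+1}$ the shaded-middle (triangle) diagram enters with coefficient $q_3=k-2$, which vanishes when $k=2$, so for $mK_3$ one cannot obtain Relation 3b by solving that identity for the shaded diagram; one must bring in unshaded-middle diagrams with a cupcap input, whose state sums are path indicators such as $[a\sim c][c\sim b]$ (for instance, the identity $[\text{triangle}] = [\,a\sim c \text{ and } c\sim b\,] - [\,a\sim b \text{ and } a=c\,]$ holds in every $mK_{k+1}$). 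The system is therefore consistent for every $k$, but demonstrating this requires exactly the explicit case analysis your proposal postpones.
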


\begin{proof} 
Suppose $\Gamma$ is a strongly regular and $\Lambda$-free graph. By Proposition \ref{prop:lfree}, $\Gamma$ is a disjoint union of $m$ complete graphs of size $k+1$. If $k = 0$, $\Gamma$ is a disjoint union of vertices and $(\ref{eq:lfree})$ becomes
\begin{equation}\label{eq:lfreek0}
     \includegraphics[valign = c, scale = .7]{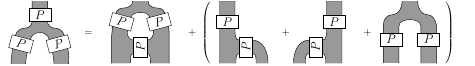}
\end{equation}
Since there are no edges in this graph, though, note that the state sum for each diagram appearing in (\ref{eq:lfreek0}) is 0 for all $a$, $b$, and $c$. Thus, by Proposition \ref{prop:atoms}, we see that each term must be 0 and by extension that (\ref{eq:lfree}) holds in $\mathcal{V}$. 

If $k = 1$, then $\Gamma$ is a disjoint union of complete graphs of size 2 and (\ref{eq:lfree}) becomes
\begin{equation}\label{eq:lfreek1}
     \includegraphics[valign = c, scale = .7]{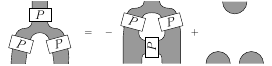}
\end{equation}
Consider the state sum for the diagrams in (\ref{eq:lfreek1}) with respect to $(a,b,c).$ If $a,$ $b,$ and $c$ are not all equal, then the state sum of all three diagrams is 0 since $k = 1.$ If $a=b=c$, the state sum for the left-hand diagram is 0 while the state sum for both right-hand diagrams is 1. Since $0 = -1 + 1$, we can conclude by Proposition \ref{prop:atoms} that this relation must hold in $\mathcal{V}$.  

Suppose now that $k \geq 2.$ By Corollary \ref{lfree}, $\Gamma$ is 3-point regular with strongly regular parameters $(m(k+1),k,k-1,0)$ and 3-point regular parameters $q_3 = k-2$ and $q_2=q_1=q_0 = 0$. Thus, by Lemma \ref{thm:3a}, we know that $\Gamma$ satisfies Relation 3a. By explicitly solving the state sums for each possible relationship among $a,$ $b,$ and $c$, we find that
\begin{equation}\label{eq:lfree2}
     \includegraphics[valign = c,scale = .87]{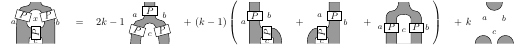}
\end{equation}
holds for all $a,$ $b,$ and $c.$ By rearranging (\ref{eq:lfree2}), Proposition \ref{prop:atoms} tells us that (\ref{eq:lfree}) holds in $\mathcal{V}$ when $k \geq 2$. 

Suppose (\ref{eq:lfree}) holds in $\mathcal{V}$ and let $\Gamma$ give a spin model for $\mathcal{V}$. Then we can rearrange (\ref{eq:lfree}) in the form of Relation 3a, which implies that $\Gamma$ is 3-point regular. Thus, $\Gamma$ is also strongly regular. Let $a,$ $b,$ and $c$ be vertices of $\Gamma$. The state sum of (\ref{eq:lfree}) where $a = b = c$ tell us that each vertex has $k$ neighbors, so the regularity parameter of $\Gamma$ is $k$. Note that if $k = 0$ or $1$, then $\Gamma$ must be $\Lambda$-free. Suppose now that $k \geq 2.$ Suppose $\Gamma$ has $n$ vertices. If $k = n - 1$ then $\Gamma$ is complete and must be $\Lambda$-free. Suppose now that $\Gamma$ is not complete. Then there exist distinct points $a$ and $b$ that are not adjacent by definition. Consider the state sum of (\ref{eq:lfree}) where $a = c$ and $a$ and $b$ are distinct, non-adjacent points. Then the left-hand side of the equation counts the number of common neighbors shared by $a$ and $b$. By inspection, the right-hand side is 0, so $a$ and $b$ share no common neighbors. Thus, $\Gamma$ is $\Lambda$-free by definition, and in all cases $\Gamma$ is strongly regular and $\Lambda$-free. If $\Gamma$ is strongly regular and triangle-free, we know that Relation 3b must be equivalent to (\ref{eq:lfree}) by Proposition \ref{prop:atoms}. Thus, $\Gamma$ satisfies Relation 3b, which completes our proof. 
\end{proof}

\begin{lemma}\label{lem:nonzero}
Suppose $\Gamma$ satisfies Relation 3b for some $\mathcal{V}$ and that $\Gamma$ is a strongly regular graph that is neither triangle-free, $\Lambda$-free, anti-$\Lambda$-free, nor anti-triangle-free. Then a non-zero constant multiple of 
\begin{center}
    \includegraphics[valign = c, scale = .7]{diagrams/pdf/unshadedmiddlep.pdf}
\end{center}
must appear in Relation 3b. 
\end{lemma}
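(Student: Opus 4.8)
The plan is to read Relation 3b as an identity of state sums and then track the contribution of the single genuinely three-point diagram on its right-hand side. By Proposition \ref{prop:atoms}, Relation 3b (in the instance $R_i=R_j=R_k=P$) holds in $\mathcal{V}$ if and only if, for every triple of vertices $(a,b,c)$, the state sum of its left-hand diagram equals the corresponding linear combination of state sums of the unshaded-middle diagrams on the right. The left-hand diagram is $\pdiagscaled$-labelled on all three strands with the central region \emph{shaded}, so it carries no internal summation: writing $e_{ab}:=c_P(a,b)\in\{0,1\}$ for the adjacency indicator, its state sum with respect to $(a,b,c)$ is the product $e_{ab}e_{bc}e_{ca}$, which by Lemma \ref{lem:tfree} is exactly the triangle indicator on $\{a,b,c\}$.

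Write the right-hand side of Relation 3b as $\alpha_{0}\cdot(\text{unshadedmiddlep})+\sum_{D}\alpha_{D}D$, where unshadedmiddlep is the all-$P$ unshaded-middle diagram and $D$ ranges over the remaining unshaded-middle diagrams; the goal is to show $\alpha_{0}\neq 0$. Suppose instead that $\alpha_{0}=0$. By Lemma \ref{twodiag}, every unshaded-middle diagram other than unshadedmiddlep lies in $X\cap Y$, hence equals (up to a scalar) some shaded-middle diagram, and that shaded-middle diagram cannot be the left-hand side shadedmiddlep, since shadedmiddlep lies in $(X\cap Y)'$ and so is equal to no unshaded-middle diagram. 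Thus, under the assumption $\alpha_{0}=0$, the triangle indicator is expressed as a linear combination of state sums of shaded-middle diagrams in which not all three slots carry $P$.

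Now I would restrict to distinct $a,b,c$ and examine those state sums. A shaded-middle diagram has no internal vertex, so its state sum factors as $c_{R_{ab}}(a,b)\,c_{R_{bc}}(b,c)\,c_{R_{ca}}(c,a)$, each factor being an adjacency indicator (for $P$), the constant $1$ (for $\identityscaleds$, the all-ones matrix), or a Kronecker $\delta$ (for $\cupcapscaleds$, the identity matrix). If any slot carries $\cupcapscaleds$, that factor is $\delta$ evaluated on two distinct vertices and the product vanishes; if some slot carries $\identityscaleds$ and the slots are not all $P$, the product is a monomial of degree at most $2$ in $e_{ab},e_{bc},e_{ca}$. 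Hence, on distinct triples, each surviving term is a polynomial of degree $\le 2$ in the three Boolean variables $(e_{ab},e_{bc},e_{ca})$, so the assumed identity would force $e_{ab}e_{bc}e_{ca}$ to equal a polynomial of degree $\le 2$ on $\{0,1\}^3$.

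The finishing step invokes the hypothesis. Since $\Gamma$ is neither triangle-free, $\Lambda$-free, anti-$\Lambda$-free, nor anti-triangle-free, each of the four three-vertex induced subgraphs of Definition \ref{def:graphtypes} is realized in $\Gamma$; permuting the roles of $a,b,c$ within each realized configuration then realizes all eight sign patterns of $(e_{ab},e_{bc},e_{ca})$ in $\{0,1\}^3$. Because the multilinear representation of a function on $\{0,1\}^3$ is unique and $e_{ab}e_{bc}e_{ca}$ has a nonzero degree-$3$ coefficient, it agrees with no polynomial of degree $\le 2$ on the full cube, which is the desired contradiction; therefore $\alpha_{0}\neq 0$. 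I expect the main obstacle to be the careful bookkeeping tied to Lemma \ref{twodiag}—verifying that each non-special unshaded-middle diagram really does coincide with a not-all-$P$ shaded-middle diagram and that its state sum degenerates to degree $\le 2$ on distinct triples—rather than the concluding cube argument, which is immediate once all eight patterns are available.
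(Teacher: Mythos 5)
Your overall strategy is sound and is, at its core, the same as the paper's: assume the coefficient of the all-$P$ unshaded-middle diagram vanishes, evaluate the resulting state-sum identity on triples realizing the four induced-subgraph types (which exist exactly by the hypothesis), and derive a contradiction. Where the paper writes out an explicit system of seven linear equations in the remaining coefficients and observes it is inconsistent (the triangle, $\Lambda$, anti-$\Lambda$, and anti-triangle equations alone force $1=0$), you package the inconsistency as a Boolean-degree argument: the left side is the degree-$3$ multilinear monomial $e_{ab}e_{bc}e_{ca}$, the right side would have degree $\le 2$, all eight points of $\{0,1\}^3$ are realized by distinct triples, and multilinear representations on the cube are unique. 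That framing is clean and, granting the degree bound, correct.

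The genuine gap is the step that feeds it: the claim that, via Lemma \ref{twodiag}, \emph{every} non-all-$P$ unshaded-middle diagram equals (up to scalar) a shaded-middle diagram and hence has a product state sum. This fails for the diagrams in which one slot carries \identityscaleds and the other two carry $P$ (and likewise two \identityscaleds slots and one $P$): there the central unshaded region is not merged away, so these diagrams retain an internal unshaded region and their state sums are genuine sums, $\sum_{x} c_P(b,x)c_P(c,x)$ (common-neighbour counts) and $\sum_x c_P(c,x)$ (vertex degrees), not products of three indicator factors. No isotopy can carry a tangle with an internal unshaded region to a shaded-middle tangle, whose unshaded regions all touch the boundary; this is a point where the literal statement of Lemma \ref{twodiag} promises more than it can deliver, and your argument leans on it in exactly the problematic cases. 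A telltale sign is that your proof never uses the strong-regularity hypothesis: without it, $\sum_x c_P(b,x)c_P(c,x)$ is not even a function of the pattern $(e_{ab},e_{bc},e_{ca})$, so your ``fixed polynomial $q$ of degree $\le 2$'' would not be well defined. The repair is exactly what the paper does at this point: invoke strong regularity (Definition \ref{def:strong}) to evaluate these terms on distinct triples as $\sum_x c_P(b,x)c_P(c,x)=\lambda e_{bc}+\mu(1-e_{bc})$ and $\sum_x c_P(c,x)=k$, polynomials of degree $\le 1$ in the pattern. With that substitution for the identity-slot terms (keeping your product argument for the \cupcapscaleds-slot terms), your cube argument closes the proof.
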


\begin{proof}
Suppose $\Gamma$ satisfies Relation 3b. Then Relation 3b gives us the following relation on the state sums for all $a,$ $b,$ and $c$:
\begin{equation}\label{eq:shadedmiddle}
\begin{split}
     \includegraphics[valign = c, scale = .75]{diagrams/pdf/shadedmiddlepatoms.pdf}  
   & =~ \includegraphics[valign = c,scale = .9]{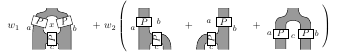} 
   \\
   & \includegraphics[valign = c,scale = .9]{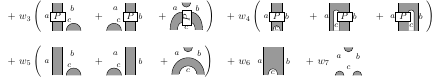} 
\end{split}
\end{equation}
Suppose that $w_1 =0$. If $\Gamma$ is a strongly regular graph that is neither triangle-free, $\Lambda$-free, anti-$\Lambda$-free, nor anti-triangle-free, then up to rotation the following graphs appear as subgraphs induced by some triple of vertices of $\Gamma$:
\begin{center}
    \includegraphics[valign = c, scale = .8]{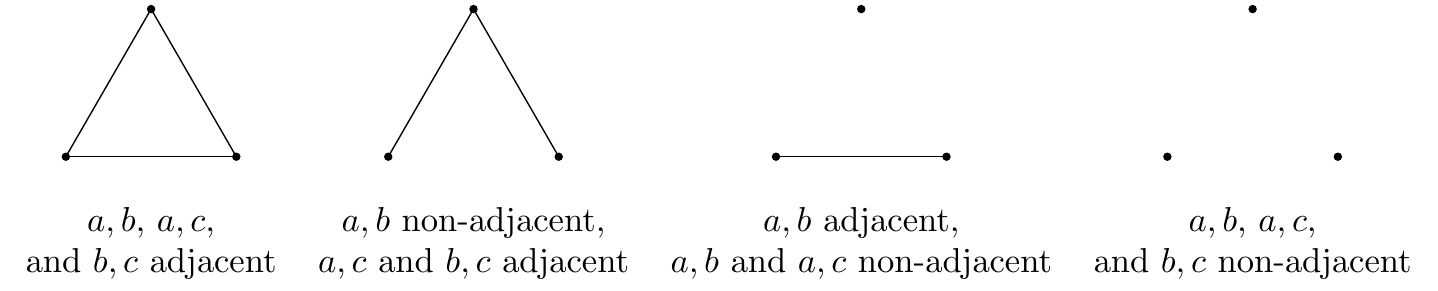}
    
    \vspace{.1in}
    
    \includegraphics[valign = c,scale = .8]{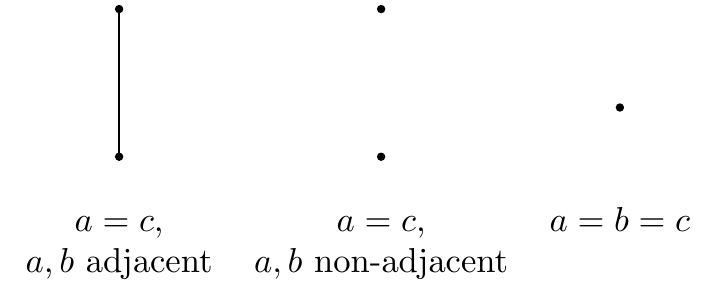}
\end{center}
Since $w_1 = 0$ and $\Gamma$ is strongly regular, we can calculate the state sum of each of the diagrams in (\ref{eq:shadedmiddle}) with respect to $a,$ $b,$ and $c$ in each scenario. The resulting system of linear equations in terms of the $w_i$ is
\begin{equation*}
\begin{array}{ccccccccc}
1 & = & 3w_2 && + 3w_4 &&& + w_7 \\
0 & = &  w_2 && + 2w_4 &&& + w_7  \\
0 & = &&&  w_4 &&& + w_7 \\
0 & = &&&&&&  w_7 \\
0 & = &w_2 &+ w_3 &+ 2w_4 &&+w_6 &+ w_7\\
0 & = &&&&&+ w_6 &+ w_7 \\
0 & = &&&& w_5 &+ w_6 &+ w_7 
\end{array}
\end{equation*}
The above system of equations, however, is inconsistent, so $\Gamma$ does not satisfy Relation 3b, a contradiction. Thus $w_1 \ne 0$ and so a non-zero multiple of 
\begin{center}
    \includegraphics[valign = c, scale = .7]{diagrams/pdf/unshadedmiddlep.pdf}
\end{center}
must appear in Relation 3b, as desired. 
\end{proof}

\begin{lemma}\label{lem:3b}
Let $\Gamma$ give a spin model for $\mathcal{V}$, and suppose $\Gamma$ is a strongly regular graph that is neither triangle-free, $\Lambda$-free, anti-$\Lambda$-free, nor anti-triangle-free. Then $\Gamma$ satisfies Relation 3b if and only if $\Gamma$ is 3-point regular with 3-point regular parameters satisfying $q_3 - 3q_2 +3q_1 - 3q_0 \ne 0$.
\end{lemma}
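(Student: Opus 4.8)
The plan is to read Relation 3b entirely through the system of state-sum equations already exhibited in the proof of Lemma \ref{lem:nonzero}. Write Relation 3b in the form of (\ref{eq:shadedmiddle}): its left-hand side is the triangle-counting diagram of Lemma \ref{lem:tfree}, and its right-hand side is $w_1$ times the common-neighbor diagram of Relation 3a together with six further unshaded diagrams $D_2,\dots,D_7$ carrying coefficients $w_2,\dots,w_7$. Taking state sums against each of the seven possible relationships of a triple $(a,b,c)$ --- the four distinct-vertex types (triangle, $\Lambda$, single edge, empty) together with the three coincidence types ($a=b=c$, and $a=b$ either adjacent or non-adjacent to $c$) --- produces a $7\times 7$ linear system $Mw=v$, where $v$ is the triangle indicator and the $w_1$-column of $M$ records the number of common neighbors of a triple of each type. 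By Proposition \ref{prop:atoms}, the assertion that $\Gamma$ satisfies Relation 3b is exactly the solvability of this system.

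For the forward direction, suppose $\Gamma$ satisfies Relation 3b. Each diagram $D_2,\dots,D_7$ is a product of adjacency indicators, so its state sum is constant on each triple-type, and the left-hand side (the triangle indicator) is likewise type-constant. Since the standing hypotheses (strongly regular, and neither triangle-free, $\Lambda$-free, anti-$\Lambda$-free, nor anti-triangle-free) are precisely those of Lemma \ref{lem:nonzero}, that lemma forces $w_1\ne 0$. Consequently the term $w_1\cdot(\text{common neighbors of }a,b,c)$ must also be type-constant, i.e. the number of common neighbors of a triple depends only on its type; this is Definition \ref{def:nptreg} with $n=3$, so $\Gamma$ is $3$-point regular. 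It remains to extract the scalar obstruction: eliminating $w_2,\dots,w_7$ from the seven equations by the appropriate (alternating) combination collapses the system to a single identity $(q_3-3q_2+3q_1-3q_0)\,w_1=c$ with $c\ne 0$; because $w_1$ is a finite nonzero scalar, this forces $q_3-3q_2+3q_1-3q_0\ne 0$.

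For the converse, assume $\Gamma$ is $3$-point regular with $q_3-3q_2+3q_1-3q_0\ne 0$. Three-point regularity makes every entry of $M$ well-defined and, by Lemma \ref{thm:3a}, already yields Relation 3a. The nonvanishing of $q_3-3q_2+3q_1-3q_0$ is then exactly the condition under which the (otherwise rank-deficient) system $Mw=v$ becomes consistent: one solves $w_1=c/(q_3-3q_2+3q_1-3q_0)$ and back-substitutes through the coincidence-type and distinct-type equations to determine $w_2,\dots,w_7$ uniquely. Assigning these values to the coefficients in (\ref{eq:shadedmiddle}) makes all seven state-sum equations hold for every $(a,b,c)$, whence Proposition \ref{prop:atoms} promotes them to the diagrammatic identity Relation 3b in $\mathcal{V}$.

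The main obstacle is the explicit linear-algebra bookkeeping that underlies both directions: computing the seven state sums of $D_2,\dots,D_7$ and of the common-neighbor column on each triple-type, and then verifying that the correct elimination of $w_2,\dots,w_7$ isolates \emph{exactly} the combination $q_3-3q_2+3q_1-3q_0$ as the surviving obstruction. Because the three coincidence-type equations couple the strongly-regular parameters $k,\lambda,\mu$ to the $q_i$, one must track these auxiliary rows carefully to confirm that they do not perturb the final scalar condition. Once this computation is carried out, the equivalence of Relation 3b with solvability of $Mw=v$ is immediate from Proposition \ref{prop:atoms}, and the stated criterion follows.
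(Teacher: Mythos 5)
Your proposal follows essentially the same route as the paper: reduce Relation 3b to the state-sum system (\ref{eq:shadedmiddle}), invoke Lemma \ref{lem:nonzero} to force $w_1 \ne 0$, deduce 3-point regularity (the paper does this by rearranging Relation 3b into the form of Relation 3a and citing Lemma \ref{thm:3a}; your direct type-constancy argument is the same observation in state-sum language), and identify the alternating scalar as the obstruction to inverting the relation, with Proposition \ref{prop:atoms} promoting the state-sum identities to identities in $\mathcal{V}$. The explicit solution for the $w_i$ that you defer as ``bookkeeping'' is exactly what the paper records in the displayed formulas (\ref{eq:unshadedmiddleb}) and (\ref{eq:shadedmiddleb}), so your level of detail roughly matches the paper's.

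One concrete correction, though: the surviving scalar is $q_3 - 3q_2 + 3q_1 - q_0$, not $q_3 - 3q_2 + 3q_1 - 3q_0$. The $-3q_0$ in the lemma statement is a typo --- the paper's own proof of this lemma, and every other occurrence of the expression in the paper (Proposition \ref{prop:3b2}, Theorem \ref{main}, the introduction), uses $-q_0$ --- and your sketch inherits it. Indeed, the $(1,-3,3,-1)$ combination of the triangle, $\Lambda$, edge, and empty triple-type equations annihilates the state sums of $D_2,\dots,D_7$ (constants, symmetrized edge indicators, and two-point path counts, all type-constant by strong regularity), leaving $w_1\bigl(q_3 - 3q_2 + 3q_1 - q_0\bigr) = 1$: the binomial pattern comes from expanding each $Q$ as the identity minus cupcap minus $P$, and the coefficient of $q_0$ is $-1$. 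Had you carried out the elimination you flag as the main obstacle, this discrepancy would have surfaced; as written, your central asserted identity is numerically false even though the architecture of the argument is sound.
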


\begin{proof}
Let $\mathcal{V}$ and $\Gamma$ be as described above. Then $\Gamma$ has all subgraphs listed in Lemma \ref{lem:nonzero}. Suppose that $\mathcal{V}$ satisfies Relation 3b. By Lemma \ref{lem:nonzero} we know that a non-zero multiple of
\begin{center}
    \includegraphics[valign = c, scale = .7]{diagrams/pdf/unshadedmiddlep.pdf}
\end{center}
must appear in Relation 3b. Thus, we can rearrange Relation 3b into the form of Relation 3a. Hence, $\mathcal{V}$ also satisfies Relation 3a, and so by Lemma \ref{thm:3a} $\Gamma$ must be 3-point regular. Since $\Gamma$ is 3-point regular and neither triangle-free, $\Lambda$-free, anti-$\Lambda$-free, nor anti-triangle-free, we can explicitly solve for the values of $w_i$ which gives the relation 
\begin{equation}\label{eq:unshadedmiddleb}
\begin{array}{rl}
     \includegraphics[valign = c, scale = .4]{diagrams/pdf/unshadedmiddlep.pdf}  
   & = \includegraphics[valign = c, scale = .87]{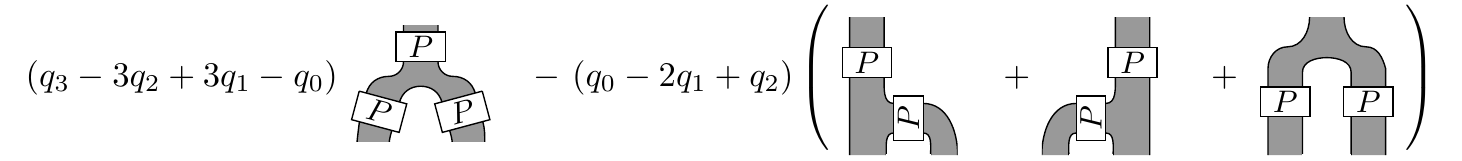}
   \\
   \\
   & \includegraphics[valign = c,, scale = .87]{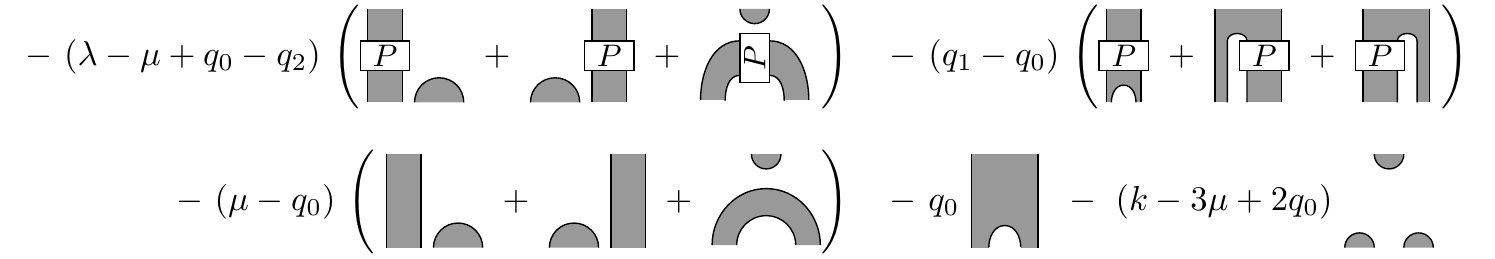} 
\end{array}
\end{equation}
Since this relation was obtained from Relation 3b, the coefficient of 
\begin{center}
    \includegraphics[valign = c, scale = .7]{diagrams/pdf/shadedmiddlep.pdf}
\end{center}
cannot be $0$, so $q_3 - 3q_2 + 3q_1 - q_0 \ne 0$. Thus, if $\mathcal{V}$ satisfies Relation 3b, $\Gamma$ must be 3-point regular with $q_3 - 3q_2 + 3q_1 - q_0 \ne 0.$ 

Now suppose that $\Gamma$ is 3-point regular with $q_3 - 3q_2 +3q_1 - q_0 \ne 0.$ Then by Lemma \ref{thm:3a}, $\mathcal{V}$ satisfies Relation 3a. Thus, we again obtain (\ref{eq:unshadedmiddleb}). By rearranging that equation, we have
\begin{equation}\label{eq:shadedmiddleb}
\begin{array}{rl}
     \includegraphics[valign = c, scale = .4]{diagrams/pdf/shadedmiddlep.pdf}  
   & = ~ \includegraphics[valign = c, scale = .87]{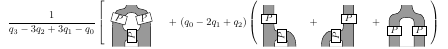} 
   \\
   \\
   & \includegraphics[valign = c, scale = .87]{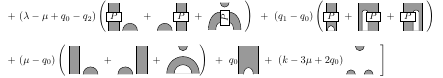} 
\end{array}
\end{equation}
which is well-defined since $q_3 - 3q_2 + 3q_1 - q_0 \ne 0$ by assumption. Thus, by Proposition \ref{prop:atoms} $\Gamma$ satisfies Relation 3b, which completes our proof.
\end{proof}

\begin{proposition}\label{prop:3b2}
Let $\Gamma$ be a strongly regular graph. Then $\Gamma$ satisfies Relation 3b if and only if up to complementation $\Gamma$ is  
\begin{enumerate}[label = \roman{enumi}.]
    \item $\Lambda$-free,
    \item triangle-free, or
    \item a 3-point regular graph that is neither triangle-free, $\Lambda$-free, anti-$\Lambda$-free, nor anti-triangle-free such that $q_3 - 3q_2 +3q_1 - q_0 \ne 0.$ 
\end{enumerate}
\end{proposition}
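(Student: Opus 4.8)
The plan is to assemble this as a clean consequence of the three preceding lemmas (Lemmas \ref{lem:tfree}, \ref{lem:lfree}, and \ref{lem:3b}) together with the structural dichotomy of Lemma \ref{free}. The statement is an ``if and only if'' whose three cases exhaust, up to complementation, all strongly regular graphs; so the strategy is to handle each of the three cases separately for the forward direction and then argue that nothing else can occur for the reverse direction. First I would invoke Lemma \ref{free}, which says that every strongly regular graph, up to complementation, is (i) a disjoint union of complete graphs (equivalently $\Lambda$-free by Proposition \ref{prop:lfree}), (ii) triangle-free and neither $\Lambda$-free nor anti-$\Lambda$-free, or (iii) neither triangle-free, $\Lambda$-free, anti-$\Lambda$-free, nor anti-triangle-free. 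This trichotomy matches the three listed cases exactly, which is the whole point of the organization; the bulk of the work has already been done in the component lemmas.

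For the reverse implication (each listed graph satisfies Relation 3b), I would simply cite the three lemmas in turn. If $\Gamma$ is $\Lambda$-free, then being strongly regular it satisfies Relation 3b by the ``moreover'' clause of Lemma \ref{lem:lfree}. If $\Gamma$ is triangle-free, it satisfies Relation 3b by the ``moreover'' clause of Lemma \ref{lem:tfree}. If $\Gamma$ is in the third case and is 3-point regular with $q_3 - 3q_2 + 3q_1 - q_0 \ne 0$, then it satisfies Relation 3b by the backward direction of Lemma \ref{lem:3b}. One bookkeeping point: since we are working up to complementation throughout (as noted in the remark following Definition \ref{def:Gspin}), I would remark that if $\Gamma^c$ is $\Lambda$-free or triangle-free then $\Gamma$ itself is anti-$\Lambda$-free or anti-triangle-free by Lemma \ref{lem:free}, and the same spin-model conclusion holds after swapping the roles of $P$ and $Q$; so allowing complementation in the statement is harmless.

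For the forward implication (if $\Gamma$ satisfies Relation 3b then it is one of the three types), I would argue by the trichotomy of Lemma \ref{free}. A strongly regular $\Gamma$ satisfying Relation 3b falls, up to complementation, into one of the three cases of Lemma \ref{free}. The first two cases are precisely conclusions (i) and (ii) of the proposition, so there is nothing further to prove there. In the third case—$\Gamma$ neither triangle-free, $\Lambda$-free, anti-$\Lambda$-free, nor anti-triangle-free—Lemma \ref{lem:3b} applies directly: its forward direction tells us that satisfying Relation 3b forces $\Gamma$ to be 3-point regular with $q_3 - 3q_2 + 3q_1 - q_0 \ne 0$, which is exactly conclusion (iii). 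Hence in every case $\Gamma$ is one of the three listed types.

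The main thing to be careful about is the interplay with complementation, since the three cases of the proposition and of Lemma \ref{free} are stated only ``up to complementation,'' and Relation 3b is asymmetric in the roles of $P$ and $Q$. The genuine mathematical content is entirely carried by Lemmas \ref{lem:tfree}, \ref{lem:lfree}, and \ref{lem:3b}; I do not expect any essential obstacle here, only the need to state clearly that passing to a complement corresponds to interchanging the generating projection $P$ with the complementary projection $Q$, under which the defining relations of $\mathcal{V}$ are preserved, so that ``$\Gamma$ gives a spin model'' is a complementation-invariant property by Definition \ref{def:Gspin}.
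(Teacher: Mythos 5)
Your proposal is correct and takes essentially the same approach as the paper: both directions are assembled from Lemmas \ref{lem:tfree}, \ref{lem:lfree}, and \ref{lem:3b}, with Lemma \ref{free} supplying the exhaustiveness of the three cases up to complementation. The only difference is organizational---the paper cases on the \emph{form} that Relation 3b takes (and re-derives the rearrangement into Relation 3a before citing Lemma \ref{free} at the end), whereas you invoke the trichotomy of Lemma \ref{free} up front and delegate case (iii) entirely to the forward direction of Lemma \ref{lem:3b}, which is, if anything, slightly cleaner.
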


\begin{proof}
By Lemma \ref{lem:tfree}, Lemma \ref{lem:lfree}, and Lemma \ref{lem:3b}, we know that $\Gamma$ satisfies Relation 3b if $\Gamma$ is in the above list. Suppose $\Gamma$ satisfies Relation 3b. If Relation 3b is of the form of Lemma \ref{lem:tfree} or Lemma \ref{lem:lfree}, then $\Gamma$ must be triangle-free or $\Lambda$-free. Suppose now that Relation 3b is not of that form. Thus, up to complementation $\Gamma$ is neither triangle-free, $\Lambda$-free, anti-$\Lambda$-free, nor anti-triangle-free. Thus, the coefficient of
\begin{center}
    \includegraphics[valign = c, scale = .6]{diagrams/pdf/unshadedmiddlep.pdf}
\end{center}
is not 0 by Lemma \ref{lem:3b}. Hence, we can rearrange Relation 3b in the form of Relation 3a, which implies that $\Gamma$ is 3-point regular. Moreover, we know that the Relation 3a must be equivalent to (\ref{eq:unshadedmiddleb}). Since this relation was obtained from Relation 3b, we know that $q_3 - 3q_2 + 3q_1 - q_0 \ne 0$. By Lemma \ref{free}, up to complementation every 3-point regular graph is either triangle-free, $\Lambda$-free, or neither triangle-free, anti-$\Lambda$-free, anti-$\Lambda$-free, nor anti-triangle-free. Thus, if $\Gamma$ satisfies Relation 3b it is $\Lambda$-free, triangle-free, or a 3-point regular graph that is neither triangle-free, $\Lambda$-free, anti-$\Lambda$-free, nor anti-triangle-free such that $q_3 - 3q_2 +3q_1 - q_0 \ne 0.$ This completes our proof.
\end{proof}

\begin{theorem}\label{main}
Let $\mathcal{V}$ be a singly-generated Yang-Baxter planar algebra. Then $\Gamma$ gives a symmetric spin model for $\mathcal{V}$ if and only if up to complementation $\Gamma$ is one of the following:
\begin{enumerate}[label = \roman{enumi}.]
\item the pentagon,
\item a disjoint union of complete graphs, or
\item a 3-point regular graph with $q_3 - 3q_2 + 3q_1 - q_0 \ne 0$. 
\end{enumerate}
\end{theorem}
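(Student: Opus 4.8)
The plan is to recognize a symmetric spin model for $\mathcal{V}$ as a graph $\Gamma$ satisfying every defining relation, and then to assemble the translations of those relations proved above. Since $P$ generates $\mathcal{V}$ the model is determined by $C_P$, and by Proposition \ref{prop:onlyp} it suffices to impose the relations with $R_i = R_j = R_k = P$. Proposition \ref{prop:graph} then makes $C_P$ the adjacency matrix of a graph $\Gamma$ (Relations 1a, 2a), Proposition \ref{prop:strong} turns Relations 1b and 2b into strong regularity, Lemma \ref{thm:3a} turns Relation 3a into $3$-point regularity, and Relation 3b is the remaining constraint. So the first move is to reduce the theorem to the statement that $\Gamma$ gives a symmetric spin model if and only if $\Gamma$ is strongly regular, $3$-point regular, and satisfies Relation 3b.

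For the forward implication I would run this through the trichotomy of Lemma \ref{free}: up to complementation a strongly regular graph is (a) a disjoint union of complete graphs, (b) triangle-free and neither $\Lambda$-free nor anti-$\Lambda$-free, or (c) neither triangle-free, $\Lambda$-free, anti-$\Lambda$-free, nor anti-triangle-free. Case (a) is already case (ii) of the theorem, and it is consistent since such a graph is $3$-point regular by Corollary \ref{lfree} and satisfies Relation 3b by Lemma \ref{lem:lfree}. In case (c), Lemma \ref{lem:3b} says that for a $3$-point regular graph Relation 3b holds exactly when $q_3 - 3q_2 + 3q_1 - q_0 \ne 0$, which is case (iii). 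Both of these are immediate.

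The remaining case (b) is where the theorem has genuine content and is the step I expect to be the main obstacle: here Relation 3b holds automatically by Lemma \ref{lem:tfree}, so the only real constraint is the $3$-point regularity forced by Relation 3a, and I must show such a graph is either the pentagon or satisfies the $q$-discriminant. I would split on the valence $k$. Membership in case (b) rules out $k \le 1$ and, among $k = 2$ graphs, the square and the disjoint triangles of Lemma \ref{lem:cycle} (these are anti-$\Lambda$-free or $\Lambda$-free), leaving only the self-complementary pentagon, which is case (i). For $k \ge 3$, triangle-freeness makes the neighborhood of any vertex an independent set of size at least $3$, so some mutually non-adjacent triple has a common neighbor and $3$-point regularity forces $q_0 \ge 1$; since triangle-freeness also gives $q_1 = q_2 = 0$ and $q_3 = 0$, the discriminant collapses to $q_3 - 3q_2 + 3q_1 - q_0 = -q_0 \ne 0$ and $\Gamma$ lands in case (iii). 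The delicate point is precisely that the pentagon is the unique survivor on which the discriminant is not even defined---it has neither a triangle nor an independent triple---so it must be recorded separately.

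Finally I would verify the converse, that each listed graph really gives a spin model, which is now routine. The pentagon is triangle-free (Relation 3b by Lemma \ref{lem:tfree}) and vacuously $3$-point regular since $k = 2 < 3$; a disjoint union of complete graphs is $\Lambda$-free, hence $3$-point regular by Corollary \ref{lfree} and satisfying Relation 3b by Lemma \ref{lem:lfree}; and a strongly regular, $3$-point regular graph with $q_3 - 3q_2 + 3q_1 - q_0 \ne 0$ satisfies Relation 3a by Lemma \ref{thm:3a} and Relation 3b by Lemma \ref{lem:3b}, Lemma \ref{lem:tfree}, or Lemma \ref{lem:lfree} according to its type in Lemma \ref{free}. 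In every case Relations 1a--2b hold because $C_P$ is a strongly regular adjacency matrix (Propositions \ref{prop:graph} and \ref{prop:strong}), and by Proposition \ref{prop:onlyp} checking $R_i = P$ suffices, so the resulting $\Phi$ is a genuine planar-algebra map and the equivalence is complete.
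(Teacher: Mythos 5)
Your proposal is correct and follows essentially the same route as the paper's proof: reduce to the defining relations via Propositions \ref{prop:graph}, \ref{prop:strong} and Lemma \ref{thm:3a}, then dispose of Relation 3b through the trichotomy of Lemma \ref{free} (which is exactly the content of Proposition \ref{prop:3b2}, here inlined rather than cited), with the same key analysis in the triangle-free case --- the pentagon at $k=2$ via Lemma \ref{lem:cycle}, and $q_1=q_2=q_3=0$, $q_0>0$ forcing a nonzero discriminant when $k\ge 3$.
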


\begin{proof} 
$\Gamma$ gives a symmetric spin model for $\mathcal{V}$ if and only if it satisfies Relations 1a, 1b, 2a, 2b, 3a, and 3b and has the property that the 2-click rotation of $P$ is equal to $P$. If $\Gamma$ is in the above list, Proposition \ref{prop:graph}, Proposition \ref{prop:strong}, Lemma \ref{thm:3a}, and Proposition \ref{prop:3b2} together tell us that $\Gamma$ gives a symmetric spin model.

Suppose $\Gamma$ gives a symmetric spin model for $\mathcal{V}$. By assumption, the 2-click rotation of $P$ is equal to $P$. Under this assumption, Proposition \ref{prop:graph} tell us $\Gamma$ satisfies Relations 1a and 1b if and only if $\Gamma$ is a graph of at least one vertex. By Proposition \ref{prop:strong}, we know that $\Gamma$ satisfies 1b and 2b if and only if it is a strongly regular graph. Lemma \ref{thm:3a} tells us that $\Gamma$ satisfies Relation 3a if and only if it is three-point regular. 

Thus, suppose that $\Gamma$ is 3-point regular, which implies that $\Gamma$ is also strongly regular. By Proposition \ref{prop:3b2}, we know that $\Gamma$ satisfies Relation 3b if and only if up to complementation $\Gamma$ is
\begin{enumerate}[label = \roman{enumi}.]
    \item $\Lambda$-free,
    \item triangle-free, or
    \item neither triangle-free, $\Lambda$-free, anti-$\Lambda$-free, nor anti-triangle-free such that $q_3 - 3q_2 +3q_1 - q_0 \ne 0.$ 
\end{enumerate}
If $\Gamma$ is $\Lambda$-free, then it is a disjoint union of complete graphs by Proposition \ref{prop:lfree}. Suppose now that $\Gamma$ is triangle-free but not $\Lambda$-free. Thus, $k \geq 2$ necessarily. If $k = 2$, then $\Gamma$ is the pentagon by Lemma \ref{lem:cycle} (Note that the complement of the square is a disjoint union of 2 complete graphs of size 2). Else, if $k\geq 3$, then $\Gamma$ has that $q_3 = q_2 = q_1 = 0$ necessarily. Since $k \geq 3$, every vertex $x$ is adjacent to some triple of distinct vertices $a,$ $b,$ and $c,$ which are pairwise non-adjacent as $\Gamma$ is triangle-free. Thus, $a$, $b,$ and $c$ share at least one common neighbor, therefore $q_0 > 0$ by the 3-point regularity of $\Gamma$. Hence, $q_3 - 3q_2 + 3q_1 - q_0 \ne 0$. Assume that neither $\Gamma$ nor its complement are triangle-free nor $\Lambda$-free. Then $\Gamma$ is neither triangle-free, $\Lambda$-free, anti-$\Lambda$-free, nor anti-triangle-free and Lemma \ref{lem:3b} tells us that $\Gamma$ satisfies 3b if and only if $q_3 - 3q_2 +3q_1 - q_0 \ne 0.$ Since we have exhausted all possible cases up to complementation, $\Gamma$ is either the pentagon, a disjoint union of complete graphs, or a 3-point regular graph with $q_3 - 3q_2 + 3q_1 - q_0 \ne 0,$ which completes our proof.  
\end{proof}

Using the classification of singly-generated Yang-Baxter planar algebras found in \cite{Liu15}, we can make the following corollary to this theorem:

\begin{corollary}
Let $\Gamma$ give a symmetric spin model for a singly-generated Yang-Baxter planar algebra, $\mathcal{V}$. Then $\mathcal{V}$ is a TLJ planar algebra when $\Gamma$ or $\Gamma^c$ is complete, a specialization of Bisch-Jones when $\Gamma$ or $\Gamma^c$ is a disjoint union of at least two $K_n$, $n > 1$, and is a Kauffman polynomial planar algebra otherwise. 
\end{corollary}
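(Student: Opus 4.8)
The plan is to read off the identification of $\mathcal{V}$ from the graph-theoretic classification of Theorem \ref{main} together with Liu's classification (Theorem \ref{Liu}), treating the three cases of Theorem \ref{main} in turn. First I would record that, by Theorem \ref{main}, $\Gamma$ is up to complementation the pentagon, a disjoint union of complete graphs, or a $3$-point regular graph with $q_3 - 3q_2 + 3q_1 - q_0 \neq 0$, while by Theorem \ref{Liu} the algebra $\mathcal{V}$ is a TLJ, Bisch-Jones, or Kauffman planar algebra, or lies in the Liu family. It therefore suffices to pin down $\mathcal{V}$ for each graph type; the Liu family will then be excluded because every graph type is accounted for by one of the other three families.

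When $\Gamma$ or $\Gamma^c$ is the complete graph $K_n$ we have $C_P = J - I$, so by (\ref{eq:qrela}) $C_Q = J - I - C_P = 0$ and hence $Q = 0$. This collapses the $2$-box space to $\dim V_{+2} = 2$, and since the TLJ planar algebra is the only family in Theorem \ref{Liu} with a two-dimensional $2$-box space (Bisch-Jones, Kauffman, and the Liu family all have $\dim V_{+2} = 3$), the algebra $\mathcal{V}$ must be a specialization of TLJ.

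When $\Gamma$ is, up to complementation, a disjoint union of $m \geq 2$ complete graphs each of size $s \geq 2$, Proposition \ref{prop:lfree} identifies it as a $\Lambda$-free graph that is not complete, equivalently the strongly-regular parameter $\mu$ of Definition \ref{def:strong} vanishes. In this case Relation 3b degenerates to the form (\ref{eq:lfree}) of Lemma \ref{lem:lfree}, which is exactly the relation expressing that $P$ is a biprojection. Since the Bisch-Jones planar algebras are precisely the singly-generated YBPAs whose generator is a biprojection, $\mathcal{V}$ is a specialization of Bisch-Jones, with the particular member (and its value of $\dim V_3$) determined by the pair $(m,s)$.

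In the remaining cases $\dim V_{+2} = 3$ while neither $\Gamma$ nor $\Gamma^c$ is $\Lambda$-free, so $P$ is not a biprojection and $\mathcal{V}$ is neither TLJ nor Bisch-Jones; the only survivors in Theorem \ref{Liu} are the Kauffman and Liu families. The crux of the argument, and the step I expect to be the main obstacle, is excluding the Liu family. For this I would invoke the explicit rearrangement (\ref{eq:unshadedmiddleb}) of Relation 3b, whose structure constants are rational expressions in the $3$-point regular parameters $q_0,\dots,q_3$ (and in $n,k,\lambda,\mu$) of $\Gamma$, and match them against the defining relations of Example \ref{dubkau}. Since these structure constants realize $\mathcal{V}$ as a specialization of the Kauffman polynomial planar algebra rather than a member of the Liu family, we conclude that $\mathcal{V}$ is Kauffman. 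As the three graph types of Theorem \ref{main} have now been matched to TLJ, Bisch-Jones, and Kauffman respectively, the Liu family does not arise, which is exactly the content of the corollary.
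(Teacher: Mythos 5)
Your complete-graph case is fine, and it is a genuinely different (arguably cleaner) argument than the paper's: from $C_Q = J - I - C_P = 0$ and injectivity (Proposition \ref{inj}) you get $Q = 0$, so $\dim V_{+2} = 2$ forces $\mathcal{V}$ to be TLJ. The other two cases, however, have real gaps. For the Bisch--Jones case you rest everything on the claims that (\ref{eq:lfree}) ``is exactly the relation expressing that $P$ is a biprojection'' and that ``the Bisch--Jones planar algebras are precisely the singly-generated YBPAs whose generator is a biprojection.'' Neither claim is established anywhere in the paper, and you do not prove either one; the second is a nontrivial fact about Fuss--Catalan universality that would itself need a citation and an argument covering the degenerate small-$\dim V_3$ members of the family. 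Most seriously, the step you yourself identify as the crux --- excluding the Liu family, i.e., showing the third graph type gives Kauffman --- is asserted rather than proven. Saying that the structure constants of (\ref{eq:unshadedmiddleb}) ``realize $\mathcal{V}$ as a specialization of the Kauffman polynomial planar algebra'' begs the question: to match against Example \ref{dubkau} you would have to construct a crossing from $P$, $Q$, and the Temperley--Lieb diagrams and verify the Reidemeister II/III and skein relations, which is essentially Jaeger's original computation and is nowhere carried out. Until that is done nothing rules out the Liu family in the third case, and your closing sentence (``every graph type is accounted for by one of the other three families'') is circular, since the accounting of the third type was exactly what required the exclusion.

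The paper handles all three cases by one uniform and much cheaper mechanism that your proposal misses: since $\Phi(P) \neq 0$, Proposition \ref{inj} makes $\Phi$ injective, so $\dim V_3$ of $\mathcal{V}$ equals the dimension of its image in $\GPA(\ast_n)_3$, which is computable directly from the graph. This gives $\dim V_3 = 5$ for complete graphs; $10$, $11$, or $12$ for $mK_n$ with $m \geq 2$; $13$ for the pentagon; and $14$ or $15$ otherwise. Comparing these numbers with the classifications in \cite{BJ00}, \cite{BJ03}, and \cite{Liu15}, together with the fact that a symmetric spin model forces the 2-click rotation of $P$ to be $P$ itself (which is what rules out the Liu family, by Liu's classification), identifies $\mathcal{V}$ in every case. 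If you want to keep your structural approach, the honest fix is to replace the unperformed structure-constant matching with this rotation-invariance argument for excluding Liu, and to replace the biprojection claim with the $\dim V_3$ count for the Bisch--Jones identification.
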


\begin{proof}
By Theorem \ref{Liu}, we know that any singly-generated Yang-Baxter planar algebra is isomorphic to either a TLJ planar algebra, a Bisch-Jones planar algebra, a Kauffman polynomial planar algebra, or a Liu planar algebra. Because these planar algebras give a symmetric spin model, however, this implies they cannot be Liu planar algebras. Let $mK_n$ be the disjoint union of $m$ complete graphs of size $n$. Then by Proposition \ref{inj}, we know that the dimension of $V_3$ for each value of $m$ and $n$ is
\begin{center}
\begin{tabular}{cc|c}
    \multicolumn{2}{c|}{$mK_n$} & $\dim V_3$  \\
    \hline
    $m = 1$ & $n\in\mathbb{N}$ & 5 \\
    $m = 2$ & $n = 2$ & 10 \\
    $m = 2$ & $n>2$ & 11 \\
    $m>2$ & $n>2$ & 12 \\
\end{tabular}
\end{center}
Thus, when $\Gamma$ is complete it gives a spin model for a TLJ planar algebra, and a Bisch-Jones planar algebra when $\Gamma = mK_n$ for $m>1$ by the classification of \cite{BJ00}. If $\Gamma$ is the pentagon, then $\dim V_3 =13$, so it is the unique planar algebra described in \cite{BJ03}, which is a Kauffman polynomial planar algebra by \cite{Liu15}. Otherwise, the planar algebra has $\dim V_3 = 14$ or $15,$ which must be a Kauffman polynomial planar algebra by the classification of \cite{Liu15} and the fact that the 2-click rotation of $P$ is itself. This completes our proof. 
\end{proof}

\subsubsection{Consequences of Theorem \ref{main}}

The classification of symmetric spin models in Theorem \ref{main} gives rise to a number of interesting facts. First, the results of that theorem can be seen as a partial classification of 3-point regular graphs. That is, a 3-point regular graph could give a spin model (i.e. be either the pentagon, a disjoint union of complete graphs, or have the property that $q_3 - 3q_2 + 3q_1 - q_0 \ne 0$)  or it could not (i.e. have the property that $q_3 - 3q_2 + 3q_1 - q_0 = 0$). In \cite{CGS78}, it is shown that up to complementation every 3-point regular graph is either the pentagon, a disjoint union of complete graphs, a Smith graph, or a negative Latin square graph. Rather surprisingly, the results of this theorem give an identical partial classification of 3-point regular graphs. In particular, every Smith graph has the property that $q_3 - 3q_2 + 3q_1 - q_0 = 0$ while no (non-complete) negative Latin Square graph has that property!

There are also a number of interesting graphs which satisfy exactly one of Relation 3a or Relation 3b (in addition to Relations 1a, 1b, 2a, and 2b). Let $\Gamma$ be a graph that satisfies Relation 3a but not Relation 3b. Then we know by Lemma \ref{thm:3a} that $\Gamma$ is 3-point regular. Since $\Gamma$ does not satisfy Relation 3b, we know that $\Gamma$ must be a 3-point regular graph that is neither triangle-free, $\Lambda$-free, anti-$\Lambda$-free, nor anti-triangle-free such that $q_3 - 3q_2 +3q_1 - q_0 = 0.$  These are exactly the Smith graphs mentioned in \cite{CGS78}. Examples of such a graph include the Schl\"afli  graph and McLaughlin graph. Let $\Gamma$ be a graph that satisfies Relation 3b but not Relation 3a (in addition to Relations 1a, 1b, 2a, and 2b). Since it does not satisfy Relation 3a, it cannot be 3-point regular. Thus, by Proposition \ref{prop:3b2}, $\Gamma$ must be strongly regular and triangle-free but not 3-point regular. Examples of such a graph include the Petersen graph. Following a suggestion by Jones, Yunxiang Ren investigated the skein theory of the planar algebra coming from the Petersen graph in order to understand what skein theory looks like beyond Yang-Baxter relations.  Our work suggests that it would also be interesting to study the skein theory for the Schl\"afli and McLaughlin graphs which also go beyond the Yang-Baxter relation albeit in a different way.

These spin models can also be used to define interesting fiber functors. For instance, the spin models of the Clebsch graph defines a fiber functor from $\Rep(S_4)$ which is not symmetric. More about this fiber functors and the others that arise will follow in a forthcoming paper. 

\subsection{Classification of non-symmetric spin models}

In the previous section, we assumed that the 2-click rotation of $P$ was equal to itself, which gave a symmetric spin model. Assume now that the spin model is not symmetric, which implies that the the 2-click rotation of $P$ is equal to $Q$ and $P \ne Q.$ Like in the symmetric case, we will be dealing with graphs. Because the spin model is not symmetric, however, the graph will be directed. More specifically, they will be tournaments.

\begin{proposition}\label{prop:digraph}
Let $\mathcal{V}$ be a singly-generated Yang-Baxter planar algebra and $P$ and $Q$ be its minimal idempotents orthogonal to $.$ Then if the matrices $C_P$ and $C_Q$ give a non-symmetric spin model, they are the adjacency matrix of a regular tournament.
\end{proposition}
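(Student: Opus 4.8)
The plan is to mirror the argument of Proposition \ref{prop:graph}, tracking how the non-symmetric hypothesis upgrades the conclusion from an undirected graph to a directed one. First I would show that $C_P$ is the adjacency matrix of a loopless simple directed graph. Exactly as in Proposition \ref{prop:graph}, idempotency $P^2 = P$ gives the state-sum identity $(c_P(a,b))^2 = c_P(a,b)$, so $c_P(a,b) \in \{0,1\}$ for all $a,b \in S$, and minimality of $P$ (so that $P \cdot \cupcapscaleds = 0$) forces $c_P(a,a) = 0$ for all $a$. The only difference from the symmetric case is that we no longer have $c_P(a,b) = c_P(b,a)$, so $C_P$ is a zero-diagonal $0$–$1$ matrix that need not be symmetric, i.e. the adjacency matrix of a directed graph $\Gamma$ with no loops and no repeated edges.

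Next I would use the non-symmetric hypothesis to pin down the relationship between $C_P$ and $C_Q$. Since the spin model is non-symmetric, the $2$-click rotation of $P$ is $Q$. Reading off state sums, this rotation carries the weight at $(a,b)$ to the weight at $(b,a)$ (the very computation that, in the symmetric case, produced $c_P(a,b) = c_P(b,a)$); hence $c_Q(a,b) = c_P(b,a)$, that is, $C_Q = C_P^{T}$. On the other hand, (\ref{eq:qrela}) gives the purely algebraic identity $C_Q = J - I - C_P$ regardless of symmetry. Equating the two descriptions of $C_Q$ yields
\[
    C_P + C_P^{T} = J - I .
\]
Read entrywise this says $c_P(a,a)=0$ (already known) and, for $a \ne b$, that exactly one of $c_P(a,b),\, c_P(b,a)$ equals $1$. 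By Definition \ref{def:tournament} this is precisely the statement that $\Gamma$ is a tournament, and $C_Q = C_P^{T}$ exhibits $\Gamma'$ as its reverse tournament.

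It remains to prove regularity. Here I would invoke Relation 1b exactly as in Proposition \ref{prop:strong}: the state sum of Relation 1b with respect to a single vertex $a$ computes the row sum $\sum_{x} c_P(a,x)$, namely the out-degree of $a$, and Relation 1b forces this to equal a constant $k$ independent of $a$. Summing over all vertices, the total number of directed edges is $nk$; but a tournament on $n$ vertices has exactly $\binom{n}{2} = \tfrac{n(n-1)}{2}$ edges, so $nk = \tfrac{n(n-1)}{2}$ and therefore $k = \tfrac{n-1}{2}$. In a tournament the in-degree of any vertex is $n-1$ minus its out-degree, so every vertex also has in-degree $\tfrac{n-1}{2}=k$; by Definition \ref{def:direg} the tournament is regular. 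Since the reverse of a regular tournament is regular, $C_Q = C_P^{T}$ is likewise the adjacency matrix of a regular tournament, as desired.

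The step I expect to be the main obstacle is the translation of the $2$-click rotation of $P$ into matrix transposition, i.e. the careful justification of $c_Q(a,b) = c_P(b,a)$ directly from the action of the planar operad on $\GPA(\ast_n)$. Once $C_Q = C_P^{T}$ is established, the tournament structure and the regularity both follow from routine bookkeeping together with the edge-count identity above.
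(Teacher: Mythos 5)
Your proposal is correct and follows essentially the same route as the paper's proof: reuse the idempotency/minimality argument of Proposition \ref{prop:graph} to get a zero-diagonal $0$--$1$ matrix, use the $2$-click rotation to identify $C_Q$ with $C_P^{T}$ together with $C_Q = J - I - C_P$ to obtain the tournament condition, and then combine Relation 1b with a double-counting of edges to force $k = \tfrac{n-1}{2}$. The only cosmetic difference is that you count directed edges against $\binom{n}{2}$ whereas the paper equates the total number of out-edges $nk$ with the total number of in-edges $n(n-k-1)$; these are the same computation.
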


\begin{proof}
We first prove $C_P$ is the adjacency matrix of a directed graph. Most of the argument in the proof of Proposition \ref{prop:graph} was not dependent on the spin model being symmetric. Thus, we know that $C_P$ and $C_Q$ are matrices composed of 1s and 0s, they must have 0s on the diagonal, and that $C_P(a,b) = 1$ if and only if $C_Q(a,b) = 0$ and vice versa. Since the 2-click rotation of $P$ is $Q$, $c_P(a,b) = c_Q(b,a)$ for all $a$ and $b.$ 

Thus, by definition $C_P$ is the adjacency matrix of a directed graph where $a$ has an out-edge to $b$ if $C_P(a,b) = 1$ and $a$ has an in-edge to $b$ if $C_P(b,a) = 1.$ In addition, since
\begin{equation}\label{eq:qrelb}
  ~=~  ~+~  ~+~   
\end{equation}
it must be the case that exactly one of $C_P(a,b)$ or $C_Q(a,b)$ is 1 for all distinct $a$ and $b$. Thus, for any distinct pair of vertices $a$ and $b$, $a$ has either an in-edge or out-edge to $b,$ which makes $C_P$ a tournament by definition.

By Relation 1b, we know that 
\begin{equation*}
\includegraphics[valign = c]{diagrams/pdf/YB1b2.pdf}  
\end{equation*}
which says that every vertex has $k$ out-edges. Capping $Q$ in a similar way and using (\ref{eq:qrelb}), we see that each vertex has $n-k-1$ in-edges. Thus, in total $\Gamma$ has $nk$ out-edges and $n(n-k-1)$ in-edges. Since every in-edge from $a$ to $b$ can be thought of as an out-edge from $b$ to $a,$ we know that $nk = n(n-k-1)$. Since $n > 0$, this implies that $k = n-k-1$ and that $\Gamma$ is a regular tournament, as desired.   
\end{proof}

Now that we are severely restricted by the type of directed graph that can give a non-symmetric spin model, we are ready to prove the following theorem, which is the main result of \cite{Jae95b}:

\begin{theorem}
Let $\Gamma$ be a directed graph. Then $\Gamma$ gives a non-symmetric spin model for a singly-generated Yang-Baxter planar algebra if and only if $\Gamma$ is the 3-cycle.
\end{theorem}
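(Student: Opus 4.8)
The plan is to prove both implications separately, with the ``only if'' direction carrying essentially all of the difficulty. For the ``if'' direction I would verify directly that the directed 3-cycle $C_3$ --- the unique regular tournament on three vertices, with regularity parameter $k=1$ --- gives a non-symmetric spin model by checking that it satisfies Relations 1a through 3b at the level of state sums. Because $C_3$ has only three vertices and a single triple of distinct vertices, which forms a directed 3-cycle, every state-sum equation reduces to a small number of cases (the distinct triple together with the degenerate cases where two or more exterior labels coincide), and each can be checked by hand. This recovers the model of \cite{Jae95b}; non-symmetry is automatic since by hypothesis the $2$-click rotation of $P$ is $Q \ne P$.

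For the ``only if'' direction, suppose $\Gamma$ gives a non-symmetric spin model. By Proposition \ref{prop:digraph}, $\Gamma$ is a regular tournament, so by Lemma \ref{lem:di} its regularity parameter is $k = \frac{n-1}{2}$ with $n$ odd. The goal is to force $n = 3$. The case $n=1$ forces $C_P = 0$, hence $\Phi(P)=0$, which is excluded, so $n \geq 3$. I would then argue by contradiction: assume $k \geq 2$ (equivalently $n \geq 5$). In this range Lemma \ref{lem:di} guarantees that \emph{both} the transitive triangle (the ``non-cycle'') and the directed 3-cycle occur as subgraphs on some triples of vertices of $\Gamma$, and this coexistence is exactly what I expect to produce an obstruction.

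The heart of the argument is then an analysis of the state-sum form of Relation 3a (and 3b), parallel to Lemmas \ref{lem:nonzero} and \ref{lem:3b} but now carried out over directed configurations. For a fixed triple $a,b,c$ placed clockwise around the middle diagram, each of the three pairs carries an edge in one of two directions, so under the convention $c_P(a,b)=c_Q(b,a)$ either $P$ or $Q$ sits at each junction; the left-hand side of Relation 3a counts the common out-neighbors $x$ of $a,b,c$ realizing a prescribed orientation pattern, while the right-hand side is a linear combination of the shaded indicator diagrams determined by the directed type of $\{a,b,c\}$. Enumerating the orientation patterns present in $\Gamma$ and equating state sums yields a linear system in the unknown coefficients, and I expect this system to be \emph{inconsistent} precisely because the simultaneous presence of transitive and cyclic triples (forced by Lemma \ref{lem:di} when $k \geq 2$) imposes incompatible constraints --- the directed analogue of the inconsistency exploited in Lemma \ref{lem:nonzero}. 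This contradiction forces $k \leq 1$, hence $n = 3$, and the unique regular tournament on three vertices is the directed 3-cycle.

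The main obstacle is the bookkeeping in this last step: correctly reducing the $2^3$ orientation patterns of the middle diagram (collapsing chirally equivalent ones), keeping straight which of $P$ or $Q$ occupies each junction under the rotation convention, and verifying that the resulting linear system genuinely has no solution rather than merely being overdetermined. Establishing that inconsistency rigorously --- rather than appealing to an informal ``too many constraints'' heuristic --- is where the real care is needed, and it is the directed counterpart of the explicit inconsistent system displayed in the proof of Lemma \ref{lem:nonzero}.
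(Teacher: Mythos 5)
Your proposal follows essentially the same route as the paper's proof: Proposition \ref{prop:digraph} reduces the problem to regular tournaments, Lemma \ref{lem:di} forces both the transitive triple and the directed 3-cycle to appear once $k \geq 2$, the resulting state-sum linear system coming from Relation 3b is then inconsistent, and the remaining case $k = 1$ gives the 3-cycle, which is verified directly. The one step you leave as ``expected'' --- the actual inconsistency of the linear system --- is also the step the paper itself only asserts ``by inspection'' (as a system of 15 equations in the $w_i$), so your plan matches the paper in both structure and level of detail.
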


\begin{proof}
Let $\Gamma$ be a directed graph. Suppose $\Gamma$ gives a non-symmetric spin model. Then by Proposition \ref{prop:digraph}, we know that $\Gamma$ is a regular tournament. In order for $\Gamma$ to give a non-symmetric spin model, $k > 0$. Suppose that $k \ne 1$. Relation 3b tells us that
\begin{equation}\label{eq:nonsym3b}
\begin{split}
     \includegraphics[valign = c, scale = .95]{diagrams/pdf/shadedmiddlepatoms.pdf}  
   & =~ \includegraphics[valign = c, scale = .95]{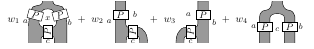} 
   \\
   & \includegraphics[valign = c, scale = .95]{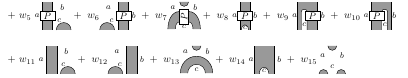} 
\end{split}
\end{equation}
for all vertices $a,$ $b,$ and $c.$ For any of the possible relationships among $a,$ $b,$ and $c$, we obtain a state sum for each diagram above. By Lemma \ref{lem:di} we know that $\Gamma$ contains subgraphs of the form
\begin{center}
    \includegraphics[valign = c]{diagrams/pdf/digraphtypes.pdf}
\end{center}
Thus, the state sums of the diagrams above give a system of 15 linear equations in terms of the $w_i.$ By inspection, though, this system is inconsistent. Thus, $\Gamma$ cannot give a spin model if $k \ne 1$. 

Now suppose that $k = 1$. Since $\Gamma$ is a regular tournament, $n = 2k + 1 = 3$. Since each vertex has one in-edge and one-out-edge, $\Gamma$ must be the 3-cycle. Assume that $\Gamma$ is the 3-cycle. Clearly, $\Gamma$ satisfies Relation 1a, 1b, and 2a. 

Note that 
\begin{equation*}
\mathlarger{\mathlarger{\sum}}_{x\in S} \begin{tikzpicture}[baseline = 0cm]
\draw[fill = black!40!, draw = none] (-.35,-.6) rectangle (1.85,.6);
\draw[fill = white] (.35,.3) to[out = 90, in = 180]  (.75,.55) 
							 to[out = 0, in = 90]    (1.15,.3)
					  		 to[out = -90, in = 90]  (1.15,-.3) 
					  		 to[out = -90, in = 0]   (.75, -.55)
					  		 to[out = 180, in = -90] (.35,-.3)
					  		 to[out = 90, in = -90]  (.35,.3);
\draw (-.35,-.6) -- (-.35,.6);
\draw[fill = white] (-.5,-.3) rectangle (.5,.3);
\draw (1.85,-.6) -- (1.85,.6);
\draw[fill = white] (1,-.3) rectangle (2,.3);
\node at (0,0) {\small{{$P$}}};
\node at (1.5,0) {\small{{$P$}}};
\node at (-.7,0) {\small{{$a$}}};
\node at (.7,0) {\small{{$x$}}};
\node at (2.2,0) {\small{{$b$}}};
\end{tikzpicture} ~=~  
\left\{
\begin{array}{cc}
    1 & a \ne b \\
    0 & a = b
\end{array}
\right.
\end{equation*}
Since 
\begin{equation*}
 ~-~  ~=~  
\left\{
\begin{array}{cc}
    1 & a \ne b \\
    0 & a = b
\end{array}
\right.
\end{equation*}
By Proposition \ref{prop:atoms}, we must have the relation
\begin{equation}\label{eq:3cyc}
    \begin{tikzpicture}[baseline = 0cm]
\draw[fill = black!40!, draw = none] (-.35,-.6) rectangle (1.85,.6);
\draw[fill = white] (.35,.3) to[out = 90, in = 180]  (.75,.55) 
							 to[out = 0, in = 90]    (1.15,.3)
					  		 to[out = -90, in = 90]  (1.15,-.3) 
					  		 to[out = -90, in = 0]   (.75, -.55)
					  		 to[out = 180, in = -90] (.35,-.3)
					  		 to[out = 90, in = -90]  (.35,.3);
\draw (-.35,-.6) -- (-.35,.6);
\draw[fill = white] (-.5,-.3) rectangle (.5,.3);
\draw (1.85,-.6) -- (1.85,.6);
\draw[fill = white] (1,-.3) rectangle (2,.3);
\node at (0,0) {\small{{$P$}}};
\node at (1.5,0) {\small{{$P$}}};
\end{tikzpicture} ~=~  ~-~ 
\end{equation}
Hence, Relation 2b is satisfied.

Since the 3-cycle has only one in-edge and out-edge for each vertex 
\begin{equation*}
     \includegraphics[valign = c]{diagrams/pdf/shadedmiddlepatoms.pdf}  ~=~
     \includegraphics[valign = c]{diagrams/pdf/unshadedmiddlepatoms.pdf}~=~ 0
\end{equation*}
for all $a,$ $b$, and $c$. Thus, by Proposition \ref{prop:atoms}
\begin{equation*}
     \includegraphics[valign = c, scale = .5]{diagrams/pdf/shadedmiddlep.pdf}  ~=~
     \includegraphics[valign = c, scale = .5]{diagrams/pdf/unshadedmiddlep.pdf}~=~ 0
\end{equation*}
and so Relations 3a and 3b are satisfied. Thus, $\Gamma$ gives a spin model for $\mathcal{V}$ if and only if it is the 3-cycle, as desired.
\end{proof}

Similar to the symmetric case, using the classification of singly-generated Yang-Baxter planar algebras in \cite{Liu15}, we can make the following corollary:

\begin{corollary}
Let $\Gamma$ be the 3-cycle. Then $\Gamma$ gives a spin model for the Bisch-Jones planar algebra with $\dim V_3 = 9.$
\end{corollary}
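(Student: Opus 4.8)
The plan is to combine Liu's classification (Theorem \ref{Liu}) with a dimension count forced by the injectivity of the spin model. By the preceding theorem, the $3$-cycle gives a non-symmetric spin model $\Phi\colon\mathcal{V}\to\GPA(\ast_3)$ for some singly-generated Yang-Baxter planar algebra $\mathcal{V}$, and Theorem \ref{Liu} tells us $\mathcal{V}$ is a TLJ, Bisch-Jones, Kauffman, or Liu planar algebra. First I would exclude the TLJ case: a non-symmetric spin model requires two \emph{distinct} minimal idempotents $P$ and $Q$ orthogonal to $\cupcapscaleds$, so that $\dim V_{+2}=3$, whereas the TLJ planar algebra has $\dim V_{+2}=2$. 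Concretely, for the $3$-cycle the matrices $C_P$, $C_Q$, and $I$ are the three permutation matrices of $\mathbb{Z}/3$, which are linearly independent, confirming $\dim V_{+2}=3$.

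Next I would pin down $\dim V_3$. Since $\Phi(P)\ne 0$, Proposition \ref{inj} shows $\Phi$ is injective, so $\dim V_3=\dim\Phi(V_3)$, where $\Phi(V_3)$ is a subspace of $\GPA(\ast_3)_{+3}\cong\mathbb{C}[S^3]$ with $|S|=3$. The key observation is that every state sum is invariant under the graph automorphisms of the $3$-cycle; in particular it is invariant under the cyclic relabeling $\rho\colon 1\mapsto 2\mapsto 3\mapsto 1$ acting diagonally on $S^3\cong(\mathbb{Z}/3)^3$, because $c_P(\rho x,\rho y)=c_P(x,y)$ and reindexing the internal sum by $\rho$ leaves each state sum unchanged. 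Hence $\Phi(V_3)$ is contained in the space of $\langle\rho\rangle$-invariant functions on $S^3$, whose dimension is the number of orbits, namely $27/3=9$ (since $\rho$ and $\rho^2$ fix no triple, Burnside's count gives $(27+0+0)/3$). This yields $\dim V_3\le 9$.

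To finish I would use Liu's classification once more. Having excluded TLJ, the algebra $\mathcal{V}$ lies among the Bisch-Jones, Kauffman, and Liu families; by the tabulated dimensions (Bisch-Jones with $\dim V_3\in\{9,10,11,12\}$ and Kauffman with $\dim V_3\ge 13$) every one of these has $\dim V_3\ge 9$. Combined with $\dim V_3\le 9$ this forces $\dim V_3=9$, and by \cite{BJ00} the unique singly-generated Yang-Baxter planar algebra with $\dim V_3=9$ is the Bisch-Jones planar algebra, i.e. the group planar algebra of $\mathbb{Z}/3$. Therefore $\Gamma$ gives a spin model for exactly that planar algebra.

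The main obstacle is making the dimension comparison airtight, specifically confirming that no family other than the $\dim V_3=9$ Bisch-Jones planar algebra can meet $\dim V_3\le 9$; this requires the lower-bound dimension data for the Liu and Kauffman families. If one prefers a self-contained argument that avoids those tabulations, the alternative is to establish the matching lower bound directly, exhibiting nine diagrams in $V_3$ built from $\cupcapscaleds$, $\pdiagscaled$, and the identity whose state sums realize a basis of the $\langle\rho\rangle$-invariant functions on $S^3$; verifying that those nine state sums are linearly independent is the bulk of the remaining work.
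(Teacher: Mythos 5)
Your proof is correct and reaches the conclusion by the same overall skeleton as the paper---injectivity via Proposition \ref{inj}, a computation of $\dim V_3$, and the uniqueness of the singly-generated YBPA with $\dim V_3 = 9$ from the classification---but you handle the dimension count genuinely differently. The paper simply asserts ``by inspection, $\dim V_3 = 9$'' and immediately invokes the classification; your invariance argument (every state sum is fixed by the diagonal action of the cyclic relabeling $\rho$, since $\rho$ is an automorphism of the directed $3$-cycle, so $\Phi(V_3)$ lands in the $9$-dimensional space of $\langle\rho\rangle$-invariant functions by Burnside) is a clean, rigorous substitute for that inspection on the upper-bound side, and is arguably what ``inspection'' must mean. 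The trade-off is on the lower-bound side: instead of exhibiting nine linearly independent state sums, which would complete a direct computation of $\dim V_3 = 9$, you squeeze against the bound $\dim V_3 \ge 9$ for the Bisch--Jones, Kauffman, and Liu families. The Kauffman and Liu bounds are nowhere stated in the paper, as you yourself flag; they do hold, because \cite{BJ00} exhaustively classifies singly-generated planar algebras with $\dim V_3 \le 12$ (below dimension $13$ only TLJ and the Bisch--Jones algebras with $\dim V_3 \in \{9,10,11,12\}$ occur, so the Kauffman and Liu families necessarily have $\dim V_3 \ge 13$), and your exclusion of TLJ via $\dim V_{+2} = 3$ correctly removes the one family that lives below $9$. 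Note, however, that the paper's own two-line proof carries exactly the same hidden reliance: its claim that ``there is only one such planar algebra'' with $\dim V_3 = 9$ likewise needs the Kauffman and Liu families to avoid dimension $9$. So your argument is no less self-contained than the original; it simply makes that dependence explicit, and in exchange it replaces an unexplained computation with a structural symmetry argument.
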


\begin{proof}
Let $\mathcal{V}$ be the planar algebra with spin model given by $\Gamma$. By inspection, $\dim V_3 = 9.$ By the classification of singly-generated Yang-Baxter planar algebras and Proposition \ref{inj}, there is only one such planar algebra, which is a Bisch-Jones planar algebra. Specifically, it is the $\mathbb{Z}/3$-group planar algebra found in \cite{BJ00}.
\end{proof}

\label{class}

\bibliographystyle{alpha}
\bibliography{text/DissBib}

\end{document}